\keywords{matrix semigroup, special affine group, Identity Problem, Group Problem}
\theoremstyle{plain} 
\newcounter{ProblemCounter}
\newcommand{\Z}{\mathbb{Z}}
\newcommand{\Q}{\mathbb{Q}}
\newcommand{\R}{\mathbb{R}}
\newcommand{\C}{\mathbb{C}}
\newcommand{\K}{\mathbb{K}}
\newcommand{\Rp}{\mathbb{R}_{\geq 0}}
\newcommand{\Rpp}{\mathbb{R}_{>0}}
\newcommand{\Qp}{\mathbb{Q}_{\geq 0}}
\newcommand{\Zp}{\mathbb{Z}_{\geq 0}}
\newcommand{\Zpp}{\mathbb{Z}_{>0}}
\newcommand{\SL}{\mathsf{SL}}
\newcommand{\T}{\mathsf{T}}
\newcommand{\SA}{\mathsf{SA}(2, \mathbb{Z})}
\newcommand{\HH}{\operatorname{H}}
\newcommand{\Lat}{\operatorname{Lat}}
\newcommand{\diag}{\operatorname{diag}}
\newcommand{\Cell}{\operatorname{Cell}}
\newcommand{\mG}{\mathcal{G}}
\newcommand{\sgmG}{\langle \mathcal{G} \rangle}
\newcommand{\mR}{\mathcal{R}}
\newcommand{\mL}{\mathcal{L}}
\newcommand{\mS}{\mathcal{S}}
\newcommand{\mH}{\mathcal{H}}
\newcommand{\mA}{\mathcal{A}}
\newcommand{\bv}{\boldsymbol{v}}
\newcommand{\bw}{\boldsymbol{w}}
\newcommand{\bx}{\boldsymbol{x}}
\newcommand{\byy}{\boldsymbol{y}}
\newcommand{\ba}{\boldsymbol{a}}
\newcommand{\bb}{\boldsymbol{b}}
\newcommand{\bc}{\boldsymbol{c}}
\newcommand{\bd}{\boldsymbol{d}}
\newcommand{\bn}{\boldsymbol{n}}
\newcommand{\be}{\boldsymbol{e}}
\newcommand{\bt}{\boldsymbol{t}}
\newcommand{\bs}{\boldsymbol{s}}
\newcommand{\bl}{\boldsymbol{\ell}}
\newcommand{\bzer}{\boldsymbol{0}}
\begin{document}

\title[The Identity Problem in the special affine group of $\mathbb{Z}^2$]{The Identity Problem in the special affine group of $\mathbb{Z}^2$}
\thanks{The author acknowledges support from UKRI Frontier Research Grant EP/X033813/1.}	

\author[R.~Dong]{Ruiwen Dong\lmcsorcid{0009-0007-4349-082X}}[a, b]

\address{Department of Mathematics, Saarland University, Germany}	

\address{Magdalen College, University of Oxford, United Kingdom}	
\email{ruiwen.dong@magd.ox.ac.uk}  

\begin{abstract}
\noindent We consider semigroup algorithmic problems in the Special Affine group $\SA = \Z^2 \rtimes \SL(2, \Z)$,
which is the group of affine transformations of the lattice $\Z^2$ that preserve orientation.
Our paper focuses on two decision problems introduced by Choffrut and Karhum\"{a}ki (2005): the \emph{Identity Problem} (does a semigroup contain a neutral element?) and the \emph{Group Problem} (is a semigroup a group?) for finitely generated sub-semigroups of $\SA$.
We show that both problems are decidable and \textbf{NP}-complete.
Since $\SL(2, \Z) \leq \SA \leq \SL(3, \Z)$, our result extends that of Bell, Hirvensalo and Potapov (2017) on the \textbf{NP}-completeness of both problems in $\SL(2, \Z)$,
and contributes a first step towards the open problems in $\SL(3, \Z)$.
\end{abstract}

\maketitle

\section{Introduction}
\subsection*{Algorithmic problems in matrix semigroups}
The computational theory of matrix groups and semigroups is one of the
oldest and most well-developed parts of computational algebra.  
The area plays an essential role in analysing system dynamics, and has numerous applications in automata theory, program analysis, and interactive proof systems~\cite{beals1993vegas, blondel2005decidable, choffrut2005some, derksen2005quantum, hrushovski2018polynomial}.
The earliest studied algorithmic problems for groups and semigroups are the \emph{Semigroup Membership} and the \emph{Group Membership} problems, introduced respectively by Markov~\cite{markov1947certain} and Mikhailova~\cite{mikhailova1966occurrence} in the 1940s and 1960s.
For both problems, we work in a fixed matrix group $G$.
The input is a finite set of matrices $\mG = \{A_1, \ldots, A_K\}$ in $G$ and a target matrix $A \in G$.
Denote by $\sgmG$ the semigroup generated by $\mG$, and by $\langle\mG\rangle_{grp}$ the group generated by $\mG$.
\begin{enumerate}[label = \roman*.]
    \item \textit{(Semigroup Membership)} decide whether $\sgmG$ contains $A$.
    \item \textit{(Group Membership)} decide whether $\langle\mG\rangle_{grp}$ contains $A$.
    \setcounter{ProblemCounter}{\value{enumi}}
\end{enumerate}
Both problems are undecidable in general matrix groups by classical results of Markov and Mikhailova~\cite{markov1947certain, mikhailova1966occurrence}.
In this paper, we consider the \emph{Identity Problem} and the \emph{Group Problem}, introduced by Choffrut and Karhum\"{a}ki~\cite{choffrut2005some} in 2005.
These two decision problems concern the \emph{structure} of semigroups rather than their \emph{membership}.
Fix a matrix group $G$, the input for both problems is a finite set of matrices $\mG = \{A_1, \ldots, A_K\}$ in $G$.
\begin{enumerate}[label = \roman*.]
    \setcounter{enumi}{\value{ProblemCounter}}
    \item \textit{(Identity Problem)} decide whether $\sgmG$ contains the neutral element $I$ of $G$.
    \item \textit{(Group Problem)} decide whether $\langle\mG\rangle$ is a group.
    \setcounter{ProblemCounter}{\value{enumi}}
\end{enumerate}
Apart from their obvious use in determining structural properties of a semigroup\footnote{For example, given a decision procedure for the Group Problem, one can compute a generating set for the \emph{group of units}  (set of of invertible elements) of a finitely generated semigroup $\sgmG$~\cite{dong2022identity}.}, the Identity Problem and the Group Problem are closely related to\footnote{In fact, decidability of Semigroup Membership subsumes decidability of the Identity Problem and the Group Problem.} the more difficult Semigroup Membership problem.
Usually, the solution to the Identity Problem is the most essential special
case on the way to building an algorithm for
the Semigroup Membership problem.
We also point out that there are significantly more available algorithms for groups than there are for semigroups, therefore performing preliminary checks using the Group Problem can help decide Semigroup Membership in many special cases (see Section~\ref{sec:obs}).

All four algorithmic problems remain undecidable even for matrices of small dimensions.
The \emph{Special Linear group} of dimension $n$, denoted by $\SL(n, \Z)$, is defined as the group of $n \times n$ integer matrices with determinant one.
Mikhailova famously showed undecidability of the Group Membership problem (and hence also Semigroup Membership) in $\SL(4, \Z)$~\cite{mikhailova1966occurrence}.
Later, Bell and Potapov showed undecidability of the Identity Problem and the Group Problem in $\SL(4, \Z)$~\cite{bell2010undecidability}.
Both undecidability results stem from the fact that $\SL(4, \Z)$ contains as a subgroup a direct product of two non-abelian free groups.
In dimension two, the Semigroup Membership problem in $\SL(2, \Z)$ was shown to be decidable in \textbf{EXPSPACE} by Choffrut and Karhum\"{a}ki~\cite{choffrut2005some}; Group Membership was shown to be in \textbf{PTIME} by Lohrey~\cite{lohrey2021subgroup}; the Identity Problem and the Group Problem were shown to be \textbf{NP}-complete by Bell, Hirvensalo and Potapov~\cite{bell2017identity}.
Here, elements of $\SL(2, \Z)$ are represented using matrices with binary encoded entries.
It remains an intricate open problem whether any of these four algorithmic problems is decidable in $\SL(3, \Z)$.
Nevertheless, Ko, Niskanen and Potapov~\cite{ko2017identity} recently showed that $\SL(3, \Z)$ cannot embed pairs of words over an alphabet of size two, suggesting that all four problems in $\SL(3, \Z)$ might be decidable.
The following table~\ref{tbl:stateofart} summarizes the state of art as well as our result, the group $\SA$ will be introduced in the next subsection.

\begin{table}[!ht]
\begin{center}
\begin{tabular}{ | m{1.4cm} | m{4cm}| m{3.5cm} | m{4.5cm} | } \hline
   & Semigroup Membership & Group Membership & Identity \& Group Problem \\
  \hline
  \hline
  $\SL(2, \Z)$ & EXPSPACE~\cite{choffrut2005some} & PTIME~\cite{lohrey2021subgroup} & NP-complete~\cite{bell2017identity}\\ 
  \hline
  $\SA$ & ? & Decidable~\cite{delgado2017extensions} & NP-complete$^{*}$\\   
  \hline
  $\SL(3, \Z)$ & ? & ? & ? \\ 
  \hline
  $\SL(4, \Z)$ & Undec.~\cite{mikhailova1966occurrence} & Undec.~\cite{mikhailova1966occurrence} & Undec.~\cite{bell2010undecidability} \\ 
  \hline
\end{tabular}
\end{center}
\caption{\label{tbl:stateofart} * = Our result.}
\end{table}

\subsection*{The Special Affine group $\SA$}
In this paper we focus on an intermediate group between $\SL(2, \Z)$ and $\SL(3, \Z)$: the \emph{Special Affine group} $\SA$.
This affine analogue of $\SL(2, \Z)$ is defined as the group of affine transformations of $\Z^2$ that preserve orientation.
Written as matrices, elements of $\SA$ are $3 \times 3$ integer matrices of the following form.
\[
\SA \coloneqq \left\{ 
\begin{pmatrix}
        A & \ba \\
        0 & 1 \\
\end{pmatrix}
\;\middle|\;
A \in \SL(2, \Z), \ba \in \Z^2
\right\}.
\]
To be precise, elements of $\SA$ are represented using matrices with \emph{binary encoded} entries.
We denote by $(A, \ba)$ the element 
$\begin{pmatrix}
        A & \ba \\
        0 & 1 \\
\end{pmatrix}$, then the neutral element of $\SA$ is $(I, \bzer)$; multiplication in $\SA$ is given by
\[
(A, \ba) \cdot (B, \bb) = (AB, A \bb + \ba).
\]
Naturally, $\SA$ has a subgroup $\{(A, \bzer) \mid A \in \SL(2, \Z)\}$ $\cong \SL(2, \Z)$.

Special Affine groups are important in the context of many fundamental problems, such as Lie groups~\cite{wolf1963affine}, polyhedral geometry~\cite{mundici2014invariant}, dynamical systems~\cite{cabrer2017classifying}, quadrics~\cite{i2011affine}, computer vision~\cite{giefer2020extended, kwon2010visual} and gauge theory~\cite{alday2010affine}.
Apart from the intrinsic interest to study $\SA$, we also point out that the Special Affine group has tight connections to various reachability problems.
Some of the central questions in automated verification include reachability problems in \emph{Affine Vector Addition Systems} and \emph{Affine Vector Addition Systems with states (Affine VASS)} over the integers~\cite{raskin2021affine}.
While both problems as well as many of their variations have been shown to be decidable in dimension one and undecidable for dimension three~\cite{finkel2013reachability, ko2018reachability}, few results are known for dimension two.
Since the study of these reachability problems in dimension two necessitates the study of sub-semigroups of $\SA$, the techniques introduced in our paper may provide insights into these open problems.

Currently, among the four algorithmic problems introduced in the beginning, the only known result in $\SA$ is decidability of the Group Membership problem.
This can be deduced from the recent work of Delgado~\cite{delgado2017extensions}, who showed decidability of the Group Membership problem in the semidirect product $\Z^m \rtimes F$, where $F$ is a free group.
Delgado's result relies on a generalization of the Stalling automata, and can therefore be extended to the case where $F$ is \emph{virtually free}.
This can then be applied to $\SA = \Z^2 \rtimes \SL(2, \Z)$, since $\SL(2, \Z)$ is virtually free.

In this paper, we step further by considering sub-semigroups of $\SA$.
We show decidability and \textbf{NP}-completeness of the Identity Problem and the Group Problem in $\SA$.
This extends the \textbf{NP}-completeness result of Bell et al.\ for the Identity Problem and the Group Problem in $\SL(2, \Z)$, and contributes a first step towards solving problems in $\SL(3, \Z)$.

The \textbf{NP}-hard lower bounds in $\SA$ directly follows from its embedding of the subgroup $\SL(2, \Z)$.
In order to prove decidability and the $\textbf{NP}$ upper bounds, our main idea is the following.
Given a finitely generated sub-semigroup $\sgmG$ of $\SA$, we show that we can without loss of generality suppose its image under the natural projection $p \colon \SA \rightarrow \SL(2, \Z)$ to be a group, using a classic result for $\SL(2, \Z)$.
If $p(\sgmG)$ is a group, we then invoke an effective version of the \emph{Tits alternative}, which shows that either $p(\sgmG)$ contains a non-abelian free subgroup, or it is virtually solvable.
In the first case we show that $\sgmG$ is always a group, while in the second case we further simplify the problem by identifying six subcases for $p(\sgmG)$.
Our proof combines two viewpoints: a group theoretic viewpoint of $\SL(2, \Z)$ as a virtually free group, and a geometric viewpoint of $\SA$ as elements acting on the lattice $\Z^2$.

Beyond the Identity Problem and the Group Problem, we will discuss some obstacles to generalizing our results to the Semigroup Membership problem in $\SA$.
Our results actually show that Semigroup Membership in $\SA$ is decidable in many cases under additional constraints.
We identify one of the remaining difficult cases,
namely when $\sgmG$ is isomorphic to a sub-semigroup of the semidirect product $\Z[\lambda] \rtimes_{\lambda} \Z$, where $\lambda$ is a quadratic integer.
The Semigroup Membership problem in $\Z[\lambda] \rtimes_{\lambda} \Z$ remains open.
However, the group $\Z[\lambda] \rtimes_{\lambda} \Z$ bears certain similarities to the \emph{Baumslag-Solitar group} $\mathsf{BS}(1, q) \coloneqq \Z[\frac{1}{q}] \rtimes_{q} \Z$;
and a recent result by Cadilhac, Chistikov and Zetzsche~\cite{DBLP:conf/icalp/CadilhacCZ20} showed decidability of the \emph{rational subset membership problem}\footnote{The rational subset membership problem subsumes the Semigroup Membership problem.} in $\mathsf{BS}(1, q)$ by considering rational languages of \emph{base-$q$ expansions}.
Despite some visible difficulties, it would be interesting in the future to adapt this approach to study the Semigroup Membership problem in $\Z[\lambda] \rtimes_{\lambda} \Z$, namely by considering rational languages of \emph{base-$\lambda$ expansions}~\cite{BLANCHARD1989131}, where $\lambda$ is an algebraic integer.

\section{Preliminaries}\label{sec:prelim}
\subsection*{Words, semigroups and groups}
Let $G$ be an arbitrary group.
Let $\mA = \{a_1, \ldots, a_K\}$ be a set of elements in $G$.
Considering $\mA$ as an alphabet, denote by $\mA^*$ the set of words over $\mA$.
For an arbitrary word $w = a_{i_1} a_{i_2} \cdots a_{i_m} \in \mA^*$, by multiplying consecutively the elements appearing in $w$, we can evaluate $w$ as an element $\pi(w)$ in $G$.
We say that the word $w$ \emph{represents} the element $\pi(w)$.
The semigroup $\langle \mA \rangle$ generated by $\mA$ is hence the set of elements in $G$ that are represented by \emph{non-empty} words in $\mA^*$.

A word $w$ over the alphabet $\mA$ is called \emph{full-image} if every letter in $\mA$ has at least one occurrence in $w$.

\begin{restatable}{lem}{lemgrpword}\label{lem:grpword}
    Let $\mA = \{a_1, \ldots, a_K\}$ be a set of elements in a group $G$.
    Consider the following conditions:
    \begin{enumerate}[label = (\roman*)]
        \item The neutral element $I$ of $G$ is represented by a full-image word over $\mA$.
        \item The semigroup $\langle \mA \rangle$ is a group.
        \item Every element $A \in \langle \mA \rangle$ is represented by a full-image word over $\mA$.
    \end{enumerate}
    Then $(i) \Longleftrightarrow (ii)$, and $(ii) \Longrightarrow (iii)$.
\end{restatable}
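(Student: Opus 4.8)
The plan is to establish the implications $(i)\Rightarrow(ii)$, $(ii)\Rightarrow(i)$ and $(ii)\Rightarrow(iii)$, which together give $(i)\Longleftrightarrow(ii)$ and $(ii)\Longrightarrow(iii)$. All three are elementary manipulations of words over $\mA$; the only genuine content is a cyclic-rotation observation needed for the first implication, namely: if $w = uv$ is a (possibly with $u$ or $v$ empty) word over $\mA$ with $\pi(w) = I$, then also $\pi(vu) = I$. This follows at once because $\pi(u)\pi(v) = I$ forces $\pi(u) = \pi(v)^{-1}$, hence $\pi(vu) = \pi(v)\pi(u) = \pi(v)\pi(v)^{-1} = I$.

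For $(i)\Rightarrow(ii)$ I would argue as follows. Suppose $I$ is represented by a full-image word $w = a_{i_1}\cdots a_{i_m}$. Fix a generator $a_j$; by the full-image property there is a position $k$ with $i_k = j$, and rotating $w$ so that it starts at position $k$ gives, by the observation above, a word of the form $a_j w_j$ with $\pi(a_j w_j) = I$. If $w_j$ is non-empty then $\pi(w_j) = a_j^{-1}$, so $a_j^{-1} \in \langle \mA \rangle$; if $w_j$ is empty then $a_j = I$, and since $w$ is itself a non-empty word representing $I$ we again have $a_j^{-1} = I \in \langle \mA \rangle$. Thus every generator has an inverse in $\langle \mA \rangle$. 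Consequently so does every element: the inverse of $\pi(a_{i_1}\cdots a_{i_m})$ equals $a_{i_m}^{-1}\cdots a_{i_1}^{-1}$, a product of elements of $\langle \mA \rangle$, hence lies in $\langle \mA \rangle$. A sub-semigroup of the group $G$ that is closed under taking inverses is a subgroup, so $\langle \mA \rangle$ is a group.

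For $(ii)\Rightarrow(i)$: assuming $\langle \mA \rangle$ is a group, each $a_j^{-1}$ lies in $\langle \mA \rangle$, so I can choose a non-empty word $v_j$ over $\mA$ with $\pi(v_j) = a_j^{-1}$. The concatenation $w \coloneqq (a_1 v_1)(a_2 v_2)\cdots(a_K v_K)$ then satisfies $\pi(w) = I$, and it contains every letter of $\mA$, so it is a full-image word representing $I$. For $(ii)\Rightarrow(iii)$: given $A \in \langle \mA \rangle$, write $A = \pi(u)$ with $u$ non-empty; with the full-image word $w$ from the previous paragraph, the word $wu$ is full-image and satisfies $\pi(wu) = \pi(w)\pi(u) = A$.

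I do not expect a serious obstacle; the statement is essentially bookkeeping. The one point requiring a little care is the degenerate case in which some generator equals $I$: this is why the cyclic-rotation step must be stated so as to allow the tail $w_j$ to be empty, and why one separately records that $I \in \langle \mA \rangle$ as soon as some non-empty word represents it.
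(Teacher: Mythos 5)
Your proposal is correct and follows essentially the same route as the paper: decomposing the full-image identity word around each occurrence of a generator to extract its inverse (your cyclic rotation is just a rephrasing of the paper's factorization $w = v\,a_i\,v'$ with $a_i^{-1} = \pi(v')\pi(v)$), and then concatenating a full-image word for $I$ with a word for $A$ to get $(ii)\Rightarrow(iii)$. Your explicit handling of the degenerate case $a_j = I$ is a small point of extra care that the paper leaves implicit, but it does not change the argument.
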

\begin{proof}
    $(i) \Longrightarrow (ii)$.
    Let $w \in \mA^*$ be a full-image word with $\pi(w) = I$.
    Then for every $i$, the word $w$ can be written as $w = v a_i v'$, so $a_i^{-1} = \pi(v') \pi(v) \in \langle \mA \rangle$.
    Therefore, the semigroup $\langle \mA \rangle$ contains all the inverse $a_i^{-1}$, and is thus a group.
    
    $(ii) \Longrightarrow (iii)$.
    If $\langle \mA \rangle$ is a group, then for all $i$, the inverse $a_i^{-1}$ can be written as $\pi(w_i)$ for some word $w_i \in \mA^*$.
    Then for any element $A \in \langle \mA \rangle$, represented by some word $w_A \in \mA^*$, the word $w \coloneqq w_A a_1 w_1 a_2 w_2 \cdots a_K w_K$ is a full-image word with $\pi(w) = \pi(w_A) \pi(a_1 w_1) \cdots \pi(a_K w_K) = A$.
    
    $(ii) \Longrightarrow (i)$.
    If $\langle \mA \rangle$ is a group, then $I \in \langle \mA \rangle$, so by $(ii) \Longrightarrow (iii)$, $I$ can be represented by a full-image word.
\end{proof}

Let $\mG = \{(A_1, \ba_1), \ldots, (A_K, \ba_K)\}$ be a set of elements in $\SA$.
Suppose that an element $A \in \langle A_1, \ldots, A_K \rangle$ in $\SL(2, \Z)$ is represented by a full-image word $w$ in the alphabet $\{A_1, \ldots, A_K\}$.
Then replacing each letter $A_i$ in $w$ by $(A_i, \ba_i)$, we obtain a product $(A, \ba) \in \SA$ for some $\ba \in \Z^2$, represented by a full-image word over $\mG$.

The following observation shows that decidability of the Group Problem implies decidability of the Identity Problem.

\begin{lemC}[\cite{bell2010undecidability}]\label{lem:grptoid}
    Given a subset $\mA$ of a group $G$, the semigroup $\langle \mA \rangle$ contains the neutral element $I$ if and only if there exists a non-empty subset $\mH \subseteq \mA$ such that $\langle \mH \rangle$ is a group.
\end{lemC}

\subsection*{Linear algebra}
For an arbitrary matrix $A \in \SL(2, \Z)$, an \emph{invariant subspace} of $A$ is a $\C$-linear subspace $V$ of $\C^2$ such that $A V = V$. 
If the eigenvalues of $A$ are reals, then one can suppose that the invariant spaces of $A$ are subspaces of $\R^2$.
Denote by $\Lat(A)$ the set of \emph{dimension one} invariant subspaces of $A$.
It is easy to see that if $A \notin \{I, -I\}$, then all the eigenspaces of $A$ have dimension one, so $\Lat(A)$ has one or two elements.

Two matrices $A$ and $B$ are called \emph{conjugates} over a field $\K$ if $P^{-1} A P = B$ for some matrix $P$ with entries in $\K$.
This is denoted as $A \overset{\K}{\sim} B$.
Two matrices $A, B \in \SL(2, \Z)$ are called \emph{simultaneously triangularizable} if there exists a complex matrix $P$ such that $P^{-1} A P$ and $P^{-1} B P$ are both upper-triangular.
It is easy to see that if $\Lat(A) \cap \Lat(B) \neq \emptyset$, then $A$ and $B$ are simultaneously triangularizable.

\subsection*{Group theory}
For a general reference on group theory, see~\cite{dructu2018geometric}.
\begin{defi}\label{def:solv}
    A group $G$ is called \emph{solvable} if its \emph{derived series}
    \[
    G = G^{(0)} \trianglerighteq G^{(1)} \trianglerighteq G^{(2)} \trianglerighteq \cdots,
    \]
    where $G^{(i+1)}$ is the commutator subgroup\footnote{The commutator subgroup of a group $H$ is the subgroup generated by the elements $ghg^{-1}h^{-1}$ where $g, h \in H$.} of $G^{(i)}$, eventually reaches the trivial group.
\end{defi}
Every subgroup of a solvable group is solvable~\cite[Proposition~13.91]{dructu2018geometric}.
Abelian groups are obviously solvable.
For any field $\K$ and integer $n$, denote by $\T(n, \K)$ the group of $n \times n$ invertible upper-triangular matrices with entries in $\K$.
Then $\T(n, \K)$ is a solvable group~\cite{beals1999algorithms}.
In particular, if two matrices $A, B \in \SL(2, \Z)$ are simultaneously triangularizable, then the group $G$ they generate is isomorphic to a subgroup of $\T(2, \C)$; thus $G$ is solvable.

Given an alphabet $\Sigma$, define the corresponding group alphabet $\Sigma^{\pm} \coloneqq \Sigma \cup \{a^{-1} \mid a \in \Sigma\}$, where $a^{-1}$ is a new symbol.
There is a natural involution $(\cdot)^{-1}$ over $\left(\Sigma^{\pm}\right)^*$ defined by $(a^{-1})^{-1}$ and $(a_1 \cdots a_m)^{-1} = a_m^{-1} \cdots a_1^{-1}$. 
A word over the alphabet is called \emph{reduced} if it does not contain consecutive letters $a a^{-1}$ or $a^{-1} a$.
For a word $w$ over the alphabet $\Sigma^{\pm}$, define $\operatorname{red}(w)$ to be the reduced word obtained by iteratively replacing consecutive letters $a a^{-1}$ and $a^{-1} a$ with the empty string.
The \emph{free group} $F(\Sigma)$ over $\Sigma$ is then defined as the set of reduced words over the alphabet $\Sigma^{\pm}$, where multiplication is given by $v \cdot w = \operatorname{red}(vw)$, and inversion is given by the involution $(\cdot)^{-1}$.
A group is called \emph{free} if it is a free group over some alphabet.
The free group $F(\Sigma)$ is abelian if and only if the cardinality of $\Sigma$ is zero or one, in which case $F(\Sigma)$ is trivial or isomorphic to the infinite cyclic group $\Z$.

\begin{thmC}[{(Nielsen–Schreier~\cite[Chapter I, Theorem 5]{serre2002trees})}]\label{thm:freesub}
    Every subgroup of a free group is free.
\end{thmC}

Let $G$ be an arbitrary group with neutral element $I$.
An element $T \in G$ is called \emph{torsion} if $T \neq I$ and $T^m = I$ for some $m > 1$.
A group is called \emph{torsion-free} if it does not contain a torsion element.
In particular, a free group is torsion-free.

A group is called \emph{virtually solvable} if it admits a finite index subgroup that is solvable.
Similarly, a group is called \emph{virtually free} if it admits a finite index subgroup that is free.
The following is a classic result.

\begin{thmC}[{\cite{newman1962structure}}]\label{thm:vf}
    The group $\SL(2, \Z)$ is virtually free.
    Moreover, it contains a finite index free subgroup $F(\{S, T\})$ over two generators.
\end{thmC}

Based on this fact, Bell et al.\ showed the following complexity result.

\begin{thmC}[{\cite{bell2017identity}}]\label{thm:SMPSL2}
    The Identity Problem and the Group Problem in $\SL(2, \Z)$ are NP-complete.
\end{thmC}
Here, the input elements in $\SL(2, \Z)$ are represented using matrices with
binary encoded entries.

\subsection*{Classification of elements in $\SL(2, \Z)$}
Let 
$A = 
\begin{pmatrix}
    a & b \\
    c & d \\
\end{pmatrix}
$
be a matrix in $\SL(2, \Z)$.
The characteristic polynomial of $A$ is $f(X) = X^2 - (a + d) X + (ad - bc) = X^2 - (a + d) X + 1$.
Consider the Jordan Normal form of the matrix $A$ in the five following cases.
\begin{enumerate}[label = (\roman*)]
    \item $a + d = 0$.
    
    In this case,
    $
    A \overset{\Q(i)}{\sim}
    \begin{pmatrix}
    i & 0 \\
    0 & -i \\
    \end{pmatrix}
    $,
    the eigenvalues of $A$ are $i$ and $-i$.
    We have $A^4 = I$, so $A$ is a torsion element.
    
    \item $|a + d| = 1$.
    
    In this case, 
    $
    A \overset{\Q(\omega)}{\sim}
    \begin{pmatrix}
    \omega & 0 \\
    0 & \omega^{-1} \\
    \end{pmatrix}
    $,
    where $\omega = \frac{1 + \sqrt{3}i}{2}$ or $\frac{- 1 + \sqrt{3}i}{2}$.
    In both cases, we have $A^6 = I$, so $A$ is a torsion element.
    
    \item $a + d = 2$.
    
    In this case, either $A = I$, or
    $
    A \overset{\Q}{\sim}
    \begin{pmatrix}
    1 & 1 \\
    0 & 1 \\
    \end{pmatrix}
    $.    
    In the second situation, we call $A$ a \emph{shear}.
    The only eigenvalue of $A$ is 1.
    If $A$ is a shear, then $\Lat(A)$ has exactly one element.
    See Figure~\ref{fig:shear} for an illustration.
    
    \item $a + d = - 2$.
    
    In this case, either $A = - I$, or
    $
    A \overset{\Q}{\sim}
    \begin{pmatrix}
    -1 & 1 \\
    0 & -1 \\
    \end{pmatrix}
    $.
    In the second situation, we call $A$ a \emph{twisted inversion}.
    In particular, if $A$ is a twisted inversion, then $(A+I)^2 = 0$, $A^2$ is a shear and $\Lat(A)$ has exactly one element.
    
    \item $|a + d| \geq 3$.
    
    In this case, 
    $
    A \overset{\R}{\sim}
    \begin{pmatrix}
    \lambda & 0 \\
    0 & \lambda^{-1} \\
    \end{pmatrix}
    $,
    where $\lambda$ is the root of $f(X)$ such that $|\lambda| > 1$.
    Furthermore, $\lambda$ is real.
    In this case, we call $A$ a \emph{scale}.
    If $\lambda > 0$, we call $A$ a \emph{positive scale}; if $\lambda < 0$, we call $A$ an \emph{inverting scale}.
    In both cases, $\Lat(A)$ has two elements, one element is the invariant space corresponding to the eigenvalue $\lambda$, and is called the \emph{stretching direction}; the other element is the invariant space corresponding to the eigenvalue $\lambda^{-1}$, and is called the \emph{compressing direction}.
    See Figure~\ref{fig:scale} for an illustration.
\end{enumerate}

    \begin{figure}[ht!]
        \centering
        \begin{minipage}[t]{0.45\textwidth}
            \centering
            \includegraphics[width=\textwidth, keepaspectratio, trim={4cm 1cm 4.5cm 0.5cm},clip]{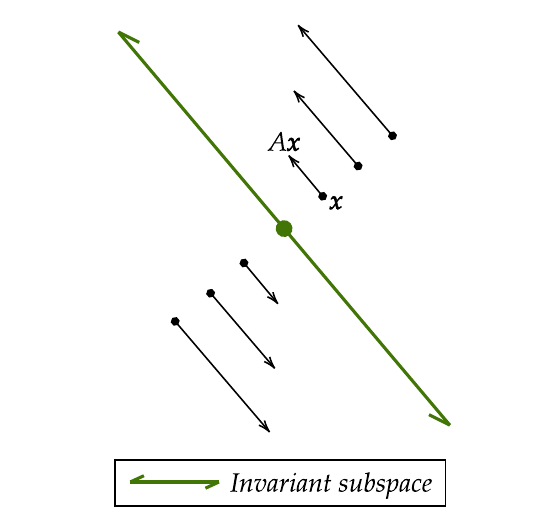}
            \caption{Illustration for a shear.}
            \label{fig:shear}
        \end{minipage}
        \hfill
        \begin{minipage}[t]{.45\textwidth}
            \centering
            \includegraphics[width=\textwidth, keepaspectratio, trim={4.5cm 1.5cm 4cm 0.5cm},clip]{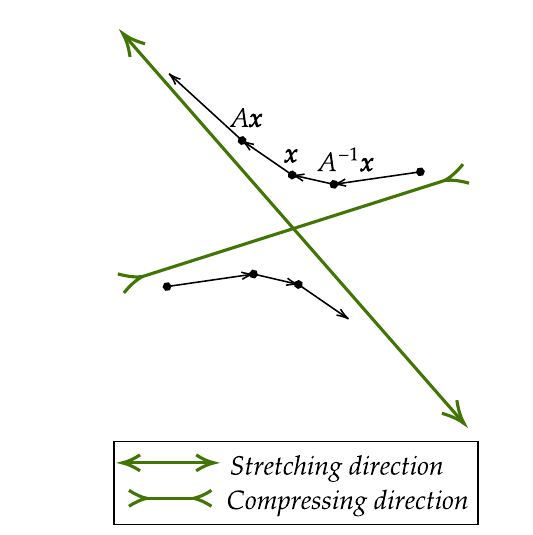}
            \caption{Illustration for a (positive) scale.}
            \label{fig:scale}
        \end{minipage}
    \end{figure}

\section{Overview of decision procedures}\label{sec:overview}
In this section we give an overview of the decision procedures for the Identity Problem and the Group Problem in $\SA$.
We state two propositions (Propositions~\ref{prop:nonsimid} and \ref{prop:simid}) regarding the structure of sub-semigroups of $\SA$.
Assuming these propositions, we prove \textbf{NP}-completeness of the Identity Problem and the Group Problem in $\SA$.
The proofs of Propositions~\ref{prop:nonsimid} and \ref{prop:simid} are delayed until Section~\ref{sec:nonsim} and \ref{sec:sim}.

We now first focus on solving the Group Problem, because by 
Lemma~\ref{lem:grptoid}, decidability of the Group Problem will imply decidability of the Identity Problem.
Fix a set $\mG \coloneqq \{(A_1, \ba_1), \ldots, (A_K, \ba_K)\}$ of elements in $\SA$.
The following lemma shows that we only need to consider the case where $G \coloneqq \langle A_1, \ldots, A_K \rangle$ is a group.

\begin{lem}\label{lem:isgrp}
    Let $\mG \coloneqq \{(A_1, \ba_1), \ldots, (A_K, \ba_K)\}$ be a set of elements of $\SA$.
    If the semigroup $G \coloneqq \langle A_1, \ldots, A_K \rangle$ is not a group, then the semigroup $\sgmG$ is also not a group.
\end{lem}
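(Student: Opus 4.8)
The plan is to use the natural projection $p \colon \SA \to \SL(2,\Z)$, $(A, \ba) \mapsto A$, which is a group homomorphism and in particular a semigroup homomorphism. Note that $p(\sgmG) = \langle A_1, \ldots, A_K \rangle = G$. The contrapositive of what we want is: if $\sgmG$ is a group, then $G$ is a group. So suppose $\sgmG$ is a group; I will deduce that $G = p(\sgmG)$ is a group.

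The key observation is that a homomorphic image of a group (as a semigroup) is again a group. Concretely, if $\sgmG$ is a group, it contains the neutral element $(I, \bzer)$ of $\SA$, hence $G = p(\sgmG)$ contains $p(I, \bzer) = I$, the neutral element of $\SL(2,\Z)$. Moreover, for any $A \in G$, pick $(A, \ba) \in \sgmG$ with $p(A, \ba) = A$; since $\sgmG$ is a group, $(A, \ba)$ has an inverse $(A, \ba)^{-1} = (A^{-1}, -A^{-1}\ba) \in \sgmG$, and applying $p$ shows $A^{-1} \in G$. Thus $G$ is closed under inversion and contains $I$, so $G$ is a group. This is essentially immediate, but to phrase it purely in the language already set up in the excerpt (semigroups as sets of words modulo evaluation), one can argue via Lemma~\ref{lem:grpword}: if $\sgmG$ is a group, then by $(ii)\Rightarrow(iii)$ of Lemma~\ref{lem:grpword} every element of $\sgmG$ is represented by a full-image word over $\mG$; in particular $(I, \bzer)$ is, say by the word $w$ on the alphabet $\mG$. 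Projecting $w$ letterwise to the alphabet $\{A_1, \ldots, A_K\}$ (i.e.\ applying $p$ to each letter) yields a full-image word over $\{A_1,\ldots,A_K\}$ evaluating to $p(I,\bzer) = I$. By $(i)\Rightarrow(ii)$ of Lemma~\ref{lem:grpword}, $G = \langle A_1, \ldots, A_K\rangle$ is a group.

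I do not anticipate a genuine obstacle here; the statement is a soft, structural fact. The only thing to be slightly careful about is the direction of the implication — the lemma as stated goes from "$G$ not a group" to "$\sgmG$ not a group", so one must remember to argue by contrapositive and not accidentally try to prove the (false) converse that "$G$ a group $\Rightarrow$ $\sgmG$ a group." I would present the contrapositive version cleanly: assume $\sgmG$ is a group and show $G$ is a group, using that $p$ is a surjective homomorphism onto $G$ together with Lemma~\ref{lem:grpword} (or, equivalently, the elementary closure-under-inverses argument above).
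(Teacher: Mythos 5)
Your proposal is correct and matches the paper's argument: the paper likewise proves the contrapositive, observing that if $\sgmG$ is a group then $(A_i, \ba_i)^{-1} = (A_i^{-1}, -A_i^{-1}\ba_i) \in \sgmG$, hence $A_i^{-1} \in \langle A_1, \ldots, A_K \rangle$ for all $i$, so $G$ is a group. Your explicit use of the projection $p$ and the alternative full-image-word phrasing are just more verbose packagings of the same idea.
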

\begin{proof}
    If $\sgmG$ is a group, then $(A_i^{-1}, - A_i^{-1} \ba_i) = (A_i, \ba_i)^{-1} \in \sgmG$ for all $i$.
    Therefore, $A_i^{-1} \in \langle A_1, \ldots, A_K \rangle$ for all $i$.
    Thus $\langle A_1, \ldots, A_K \rangle$ is a group.
\end{proof}

Suppose now that $G$ is a group.
The key idea of solving the Group Problem is the following dichotomy known as the \emph{Tits alternative}.
\begin{thm}[{Tits alternative~\cite[Theorem~1]{TITS1972250}, effective version~\cite[Theorem~1.1]{beals1999algorithms}}]\label{thm:Tits}
    For any $n$, given a finitely generated subgroup $G$ of $\SL(n, \Z)$, exactly one of the following is true:
    \begin{enumerate}[label = (\roman*)]
        \item $G$ contains a non-abelian free subgroup.
        \item $G$ is virtually solvable.
    \end{enumerate}
    Furthermore, given a set of generators of $G$, it is decidable in PTIME which of the two cases is true.
\end{thm}

In case of $G$ containing a non-abelian free subgroup, we will prove the following Proposition~\ref{prop:nonsimid}, which shows that the semigroup $\sgmG$ must be a group.

\begin{restatable}{prop}{propnonsimid}\label{prop:nonsimid}
    Let $\mG = \{(A_1, \ba_1), \ldots, (A_K, \ba_K)\}$ be a set of elements of $\SA$, such that the semigroup $G \coloneqq \langle A_1, \ldots, A_K \rangle$ is a group.
    If $G$ contains a non-abelian free subgroup, then $\sgmG$ is a group.
\end{restatable}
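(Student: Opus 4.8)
The plan is to reduce the statement to Lemma~\ref{lem:grpword}(i): it suffices to exhibit a full-image word over $\mG$ that evaluates to the neutral element $(I, \bzer)$ of $\SA$. Fix any matrices $A, B \in G = \langle A_1, \ldots, A_K \rangle$ that generate a non-abelian free subgroup $F \leq G$. Since $G$ is a group and is finitely generated by $\{A_1, \ldots, A_K\}$, each $A_i$ as well as $A_i^{-1}$ lies in $G$, hence is represented by some word over the alphabet $\{A_1, \ldots, A_K\}$; moreover, by applying Lemma~\ref{lem:grpword}(ii)$\Rightarrow$(iii) to the group $G = \sgmG$ in $\SL(2,\Z)$, every element of $G$ — in particular $I$, $A$, $B$, $A^{-1}$, $B^{-1}$, and their products — is represented by a \emph{full-image} word over $\{A_1, \ldots, A_K\}$. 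The idea is then to first build a word $w_0$ over $\mG$ that is full-image and whose $\SL(2,\Z)$-part is $I$; such a $w_0$ evaluates in $\SA$ to some $(I, \bt_0)$ with $\bt_0 \in \Z^2$, by the observation following Lemma~\ref{lem:grpword}. If $\bt_0 = \bzer$ we are done; otherwise we must cancel the translation part.

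The key mechanism for cancelling translations exploits the free subgroup. Consider the two elements $\tilde A = (A, \ba)$ and $\tilde B = (B, \bb)$ of $\sgmG$ obtained by evaluating full-image words for $A$ and $B$ over $\mG$ (so $\ba, \bb$ are determined). For any reduced word $u$ in $F(\{A,B\})$ that represents $I$ as a matrix but is nontrivial as a free-group word — wait, there is no such word since $F$ is free — so instead we proceed as follows. For each nontrivial $g \in F$, evaluating a chosen $\SA$-word lifting $g$ gives an element $(g, \bv_g)$ of $\sgmG$, and evaluating a lift of $g^{-1}$ gives $(g^{-1}, \bw_g)$; their product is $(I, g\,\bw_g + \bv_g)$, an element of $\sgmG$ whose $\SL(2,\Z)$-part is trivial. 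Varying $g$ over sufficiently many elements of the free group, the translations $g\,\bw_g + \bv_g$ we obtain span a finite-index sublattice of $\Z^2$ — here one uses that $F$, being non-abelian free in $\SL(2,\Z)$, contains a \emph{scale} (a matrix with $|\operatorname{tr}| \geq 3$), and more importantly contains pairs of elements with distinct invariant directions, so the $\Z$-span of the reachable translation vectors cannot be contained in a single line. Concretely I would pick $g_1, g_2 \in F$ whose associated trivial-part elements $(I, \bt_1), (I, \bt_2) \in \sgmG$ have $\bt_1, \bt_2$ linearly independent over $\Q$; conjugating these by a scale $s \in F$ (i.e.\ forming $(s, \cdot)(I, \bt_j)(s^{-1}, \cdot) = (I, s\,\bt_j)$, again realised by full-image $\SA$-words) and iterating, one generates translations $(I, s^k \bt_j)$ for all $k$, and a suitable integer combination of these hits $-\bt_0$, because the $\Z[s, s^{-1}]$-module generated by two independent vectors is all of $\Z^2$ up to finite index, and by further multiplying we reach any lattice point. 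Appending the corresponding full-image word to $w_0$ yields a full-image word over $\mG$ representing $(I, \bzer)$.

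The main obstacle is the precise argument that the translations attainable via the free subgroup $F$ exhaust $\Z^2$ (or at least are flexible enough to cancel any given $\bt_0$), together with keeping every intermediate word \emph{full-image}. The full-image requirement is actually the easy part: once we have \emph{any} full-image word $w_0$ with trivial $\SL(2,\Z)$-part, we may freely prepend or append further words whose $\SL(2,\Z)$-part is trivial without destroying full-imageness, so it is enough to realise the needed translation corrections by words of trivial matrix-part, which the construction above does. The genuinely delicate step is the module-theoretic claim: I would argue that the $\SA$-elements of the form $(I, \ast)$ lying in $\sgmG$ form a sub-\emph{monoid} of $\{I\} \times \Z^2 \cong \Z^2$ that is invariant under multiplication by every matrix $C$ for which both $(C, \ast)$ and $(C^{-1}, \ast)$ lie in $\sgmG$ — in particular invariant under the whole group $G$, since $G = \sgmG$ in $\SL(2,\Z)$. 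A $G$-invariant sub-monoid of $\Z^2$ containing a nonzero vector must, because $G$ contains a non-abelian free group and hence elements without common invariant line (indeed a scale together with another element moving its eigendirections), contain two $\Q$-linearly independent vectors and their negatives, hence be a finite-index subgroup of $\Z^2$; a short additional argument using a scale then upgrades "finite index subgroup" to "all of $\Z^2$", or alternatively one notes finite index suffices after replacing $w_0$ by a suitable power. Granting this, $-\bt_0$ lies in the reachable set, and concatenation finishes the proof.
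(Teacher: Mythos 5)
There is a genuine gap, and it sits exactly where you flag "the genuinely delicate step". Two concrete problems. First, your invariance claim is false as stated: if $(I,\bv)\in\sgmG$ and $(C,\bc),(C^{-1},\bc')\in\sgmG$, then $(C,\bc)(I,\bv)(C^{-1},\bc') = (I, C\bv + \bc + C\bc')$, not $(I,C\bv)$. So the set $M$ of reachable pure translations is closed under the \emph{affine} maps $\bv\mapsto C\bv+\bt_C$ (with $\bt_C=\bc+C\bc'$), not under $\bv\mapsto C\bv$; since $M$ is only a monoid you cannot subtract off $\bt_C$, so "$G$-invariant submonoid" is not available, and your computation $(s,\cdot)(I,\bt_j)(s^{-1},\cdot)=(I,s\,\bt_j)$ is wrong for the same reason. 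Iterating the correct affine map gives $C^n\bv + (I+C+\cdots+C^{n-1})\bt_C$, and the geometric-series term dominates: controlling its direction is precisely the content of the paper's Lemmas~\ref{lem:scalelim} and~\ref{lem:shearlim}, which you do not have a substitute for. Second, even granting some form of invariance, the assertion that $M$ must "contain two $\Q$-linearly independent vectors \emph{and their negatives}" is the entire theorem and is only asserted. A $\Z$-spanning argument does not help ($\Zp^2$ is a submonoid of $\Z^2$ that spans $\Z^2$ over $\Z$ yet contains no nonzero vector together with its negative); what is needed is that $\bzer$ is a \emph{positive}-integer combination of reachable translations, i.e.\ that their directions are not confined to a closed half-plane. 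Note also that applying a positive scale to a vector keeps it in the same cone cut by its eigendirections, so powers of a single scale never produce opposing directions.

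The paper closes exactly this hole: after reducing to $A,B$ positive scales or shears with $\Lat(A)\cap\Lat(B)=\emptyset$, it performs a case analysis on the cyclic arrangement of the invariant lines and applies Lemmas~\ref{lem:scalelim}/\ref{lem:shearlim} alternately to manufacture four (or six) elements $(I,\bx_1),\ldots,(I,\bx_k)\in\sgmG$ whose translation parts hug the eigendirections with \emph{opposing} orientations, hence generate $\Q^2$ as a $\Qp$-cone; a positive integer combination then yields $(I,\bzer)$ as a full-image word. Your reduction to Lemma~\ref{lem:grpword} and your observation that full-imageness is preserved under appending trivial-matrix-part words are both correct and match the paper, but the core geometric mechanism is missing.
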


The proof of Proposition~\ref{prop:nonsimid} is highly non-trivial and will be given in Section~\ref{sec:nonsim}.
The proof is mainly geometric - it consists of analysing the action of $\SA$ on the lattice $\Z^2$.

In case of $G$ being virtually solvable, we will prove the following Proposition~\ref{prop:simid} which refines the Tits alternative.\footnote{The semigroup $\sgmG$ is contained in the semidirect product $\Z^2 \rtimes G \leq \SA$, which is virtually solvable if $G$ is virtually solvable. A recent result by Bodart and Dong~\cite{bodart2024identity} shows that the Identity Problem and the Group Problem are decidable in all virtually solvable matrix groups over algebraic numbers. This notably includes virtually solvable subgroups of $\SA$. However, it is not clear whether the algorithm of Bodart and Dong runs in \emph{elementary} time, let alone PTIME.}
In particular, it shows that a virtually solvable subgroup of $\SL(2, \Z)$ has a relatively simple structure: it is either trivial, or it contains a torsion element, or it is infinite cyclic.

\begin{restatable}{prop}{propsimid}\label{prop:simid} 
    Let $\mG = \{(A_1, \ba_1), \ldots, (A_K, \ba_K)\}$ be a set of elements of $\SA$, such that the semigroup $G \coloneqq \langle A_1, \ldots, A_K \rangle$ is a group.
    If $G$ is virtually solvable, then exactly one of the following six conditions holds:
    \begin{enumerate}[label = (\roman*)]
        \item $G$ is the trivial group.
        \item $G$ contains a torsion element.
        \item $G = \langle A \rangle_{grp}$, where $A$ is a twisted inversion.
        \item $G = \langle A \rangle_{grp}$, where $A$ is a shear.
        \item $G = \langle A \rangle_{grp}$, where $A$ is an inverting scale.
        \item $G = \langle A \rangle_{grp}$, where $A$ is a positive scale.
    \end{enumerate}
    Furthermore, in cases (ii), (iii) and (v), the semigroup $\sgmG$ is a group.
    Overall, it is decidable in PTIME whether $\sgmG$ is a group.
\end{restatable}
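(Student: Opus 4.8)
The plan is to first establish the group-theoretic classification of virtually solvable subgroups $G$ of $\SL(2, \Z)$, and then separately analyze the semigroup $\sgmG$ in the relevant cases. For the classification, I would use the structure of $\SL(2, \Z)$ as a virtually free group (Theorem~\ref{thm:vf}): $G$ is virtually solvable and a subgroup of a virtually free group, hence $G$ is itself virtually free; but a virtually free group that is virtually solvable must be virtually cyclic (a free group of rank $\geq 2$ is not virtually solvable, by the ping-pong lemma / Tits alternative). So $G$ is virtually cyclic. Now I would use the classification of elements of $\SL(2, \Z)$ recalled in Section~\ref{sec:prelim}: each element is either torsion (cases (i)--(ii) of that classification), $\pm I$, a shear, a twisted inversion, or a scale. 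If $G$ contains a torsion element we are in case (ii). Otherwise $G$ is torsion-free and virtually cyclic, hence infinite cyclic (a torsion-free virtually cyclic group is $\Z$), so $G = \langle A \rangle_{grp}$ for a single generator $A$ which, having infinite order, must be a shear, a twisted inversion, or a scale; scales split into positive and inverting, giving cases (iii)--(vi). (The case $G$ trivial is (i), and one should check mutual exclusivity: a twisted inversion and a shear have distinct traces, an inverting scale has negative trace, etc., so the generator type is determined by $G$, and none of (iii)--(vi) contains torsion.) Deciding in PTIME which case holds: compute the group generated by the $A_i$ using, e.g., the virtually-free structure; concretely, check traces of the generators and products — this is where I'd lean on the effective Tits alternative (Theorem~\ref{thm:Tits}) plus explicit bookkeeping on $\SL(2,\Z)$, already known to be PTIME-tractable by Theorem~\ref{thm:SMPSL2} and Lohrey's result.

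Next, the claim that in cases (ii), (iii), (v) the semigroup $\sgmG$ is a group. The common mechanism here is that the projection $p(\sgmG) = G$ has an element of finite order, or more precisely contains $-I$ or a torsion element whose even power is useful. In case (ii), $G$ has a torsion element $T$ with $T^m = I$, $m > 1$. I would apply Lemma~\ref{lem:grpword}: it suffices to represent $(I, \bzer)$ by a full-image word over $\mG = \{(A_1,\ba_1),\dots,(A_K,\ba_K)\}$. Since $G$ is a group, every $A_i^{-1} \in G$, so some word in the $A_i$ evaluates to $I$; the point is to arrange a full-image word and control the translation part. The key trick: if $w$ is a full-image word over $\{A_1,\dots,A_K\}$ with $\pi(w) = I$, then lifting to $\mG$ gives some $(I, \ba)$; but then $(I,\ba)^m = (I, m\ba)$, and if we can instead produce $(I, \bzer)$ we are done. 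In cases (ii), (iii), (v), the structure of $G$ guarantees a full-image product equal to an element of the form $(C, \bc)$ with $C$ having an eigenvalue of absolute value $< 1$ or with $C$ of finite order combined with a sign change, so that iterating/conjugating drives the translation to $\bzer$. Concretely: a twisted inversion $A$ satisfies $A^2$ is a shear with $(A+I)^2 = 0$; the relevant lift $(A,\ba)$ has a power or conjugate whose affine part is a unipotent with a fixed point — since $-I$-type behavior (trace $-2$) lets one write $(A,\ba)(A,\ba)\cdots$ telescoping to identity translation. For an inverting scale $A$ (eigenvalue $\lambda < -1$ and $\lambda^{-1} \in (-1,0)$), the map $\bx \mapsto A\bx + \ba$ has a unique fixed point $\bx_0 = (I - A)^{-1}\ba \in \Q^2$; I would show that a suitable power $(A,\ba)^n = (A^n, \ba_n)$ together with its inverse can be combined — the even power $A^{2n}$ is a positive scale, and the crucial observation is that in $\SA$, the element $(A, \ba)$ generates a group isomorphic to $\Z$ only if $\ba$ sits over the fixed lattice point; when the fixed point is not integral, one instead gets that $\langle (A,\ba)\rangle_{grp}$ is still the whole semigroup $\sgmG$ is contained in and the full-image word equal to $(I,\bzer)$ is built from $(A, \ba)^k$ for $k$ making both the $\SL_2$-part trivial (impossible for a scale) — so actually for scales one needs $G$ to have more than one generator, which contradicts $G = \langle A\rangle_{grp}$ unless...

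Here I realize the subtle point and would restructure: in cases (iii), (v), we have $G = \langle A\rangle_{grp}$ cyclic, so the generating set $\mG$ consists of lifts $(A^{e_1}, \ba_1), \dots, (A^{e_K}, \ba_K)$ with exponents $e_j \in \Z \setminus \{0\}$, and since $G$ is a group some positive and some negative $e_j$ occur (or $e_j$ of both signs are obtainable). The goal is a full-image word evaluating to $(I,\bzer)$. Because $A$ is a twisted inversion or inverting scale, $A$ has the eigenvalue behavior (trace $\leq -2$) that makes $I - A$ and $I + A$ both invertible over $\Q$ in the relevant direction, which I'd exploit to solve the linear system for the translation parts: a full-image word with $\SL_2$-part $A^0 = I$ requires $\sum$ of exponents with multiplicities to vanish, and the induced affine part is then a $\Z$-linear combination $\sum_j (\text{matrix}_j)\ba_j$ of the lifts; I claim the reachable set of such translations is a full sublattice (using that the coefficient matrices span, because $A$ acts without nonzero fixed vectors when trace $\ne 2$), hence $\bzer$ is reachable, possibly after multiplying by $m$ via iteration in the torsion case. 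The main obstacle is exactly this last point: showing the affine parts achievable by full-image words projecting to $I$ form a group (equivalently contain $\bzer$) — this is where cases (iv) (shear) and (vi) (positive scale) genuinely fail (there $I - A$ is singular with an \emph{integer} kernel direction, so translations along the invariant lattice line can only accumulate in one direction and never cancel), and the proposition correctly omits them. So the heart of the proof is a careful case analysis showing that trace $\leq -2$ or torsion forces the translation lattice to be symmetric; I expect this fixed-point / lattice argument, together with verifying the PTIME decidability by reducing each of the six cases to trace computations and a bounded linear-algebra check over $\Q$, to be the bulk of the work, and I would defer the detailed computations to Section~\ref{sec:sim}.
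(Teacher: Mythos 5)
Your classification of $G$ (a subgroup of the virtually free group $\SL(2,\Z)$ that is virtually solvable must be virtually cyclic, hence trivial, torsion, or infinite cyclic) matches the paper's route via Lemmas~\ref{lem:solvZ} and~\ref{lem:tororZ}, and your treatment of case (ii) is on the right track: the paper lifts a torsion element to $(T,\bt)$ and computes $(T,\bt)^m = (I,(I+T+\cdots+T^{m-1})\bt) = (I,\bzer)$ using that $I-T$ is invertible. But the second half of your plan has concrete problems. Your claim that trace $\leq -2$ makes ``$I-A$ and $I+A$ both invertible over $\Q$'' fails for a twisted inversion, where $A+I$ is nilpotent with $(A+I)^2=0$; it is precisely this nilpotency that the paper exploits, via the explicit full-image word $(A,\ba)^2(A^{-1},\bb)^3(A,\ba)^2(A^{-1},\bb)$, whose translation part $A^{-1}(A+I)^2\ba+(A+I)^2\bb$ vanishes identically.

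More seriously, your central claim that ``the reachable set of such translations is a full sublattice, hence $\bzer$ is reachable'' is false as stated and cannot be obtained from a spanning argument: multiplicities in a word are \emph{positive}, so the translations attached to full-image words with trivial linear part form only an additive sub-semigroup of $\Z^2$, not a lattice --- this positivity constraint is exactly why cases (iv) and (vi) can fail, as you yourself note, so you cannot invoke a lattice argument to settle case (v). What actually makes the inverting scale work is sign alternation from $\lambda<-1$: the paper forms $(A,\cdot)^m(A^{-1},\cdot)^m$ and its reverse for consecutive parities of $m$ and uses $1-\lambda^{2m}\to-\infty$, $1-\lambda^{2m+1}\to+\infty$ to produce four translation vectors whose directions approach $\pm\bx_V$ and $\pm\bx_W$, hence generate $\Q^2$ as a $\Qp$-cone and admit a positive integer combination equal to $\bzer$ (with separate degenerate subcases); your plan has no substitute for this. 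Finally, the decidability claims are underspecified: discriminating the six cases uses the kernel-lattice computation of Lemma~\ref{lem:decisoZ} (Babai et al.\ plus Hermite Normal Form), not trace bookkeeping, and the PTIME decisions in cases (iv) and (vi) are not ``bounded linear-algebra checks'' --- case (iv) goes through an embedding into $\HH_3(\Q)$ and Theorem~\ref{thm:H3}, and case (vi) requires the cell/cone analysis of Lemmas~\ref{lem:fullcell}--\ref{lem:radcomb} and Proposition~\ref{prop:idtocells}.
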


Proposition~\ref{prop:simid} will be proved in Section~\ref{sec:sim}.
The proof will be mainly algebraic - it consists of analysing the structure of sub-semigroups of a virtually solvable group. 
Lemma~\ref{lem:isgrp}-Proposition~\ref{prop:simid} yield the decidability of the Group Problem (and consequently, the Identity Problem) in $\SA$.
An overview of the procedure is given in Algorithm~\ref{alg:GP}.
The justification of each step is given in parentheses with reference to the corresponding lemmas or propositions.

\begin{restatable}{thm}{thmid}\label{thm:id}
    The Group Problem and the Identity Problem in $\SA$ are NP-complete.
\end{restatable}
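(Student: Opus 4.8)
\textbf{Proof proposal for Theorem~\ref{thm:id}.}

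The plan is to establish the \textbf{NP}-hardness and the \textbf{NP} membership separately, and to treat the Identity Problem and the Group Problem in one stroke via Lemma~\ref{lem:grptoid}. For \textbf{NP}-hardness, I would simply invoke the embedding $\SL(2, \Z) \hookrightarrow \SA$, $A \mapsto (A, \bzer)$. Given an instance $\{A_1, \ldots, A_K\}$ of the Identity (resp.\ Group) Problem in $\SL(2, \Z)$, the set $\{(A_1, \bzer), \ldots, (A_K, \bzer)\}$ generates a sub-semigroup of $\SA$ that is a group, resp.\ contains $(I, \bzer)$, if and only if the original instance was positive — this is immediate since the map is an injective group homomorphism and the last coordinate of every product stays $\bzer$. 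By Theorem~\ref{thm:SMPSL2} these problems in $\SL(2, \Z)$ are \textbf{NP}-hard, so the same holds in $\SA$.

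For the \textbf{NP} upper bound, I would first reduce the Identity Problem to the Group Problem: by Lemma~\ref{lem:grptoid}, $\sgmG$ contains the neutral element iff some non-empty subset $\mH \subseteq \mG$ generates a group, so an \textbf{NP} procedure for the Group Problem yields one for the Identity Problem by guessing the subset $\mH$ (of polynomial size, being a sub-list of the input) and verifying that $\langle \mH \rangle$ is a group. So it suffices to show the Group Problem for $\sgmG$ is in \textbf{NP}. Here I follow the pipeline of Algorithm~\ref{alg:GP}: by Lemma~\ref{lem:isgrp}, if $G = \langle A_1, \ldots, A_K \rangle \le \SL(2, \Z)$ is not a group then $\sgmG$ is not a group; and checking whether $G$ is a group is the Group Problem in $\SL(2, \Z)$, which is in \textbf{NP} by Theorem~\ref{thm:SMPSL2}. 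If $G$ is a group, the effective Tits alternative (Theorem~\ref{thm:Tits}) decides in \textbf{PTIME} which of its two cases holds. In the non-abelian-free case, Proposition~\ref{prop:nonsimid} tells us $\sgmG$ is always a group, so we accept. In the virtually solvable case, Proposition~\ref{prop:simid} says $\sgmG$ is always a group in cases (ii), (iii), (v), never a group in case (i), and in the remaining cases (iv), (vi) the same proposition asserts decidability in \textbf{PTIME}; moreover determining which of the six cases $G$ falls into is done in \textbf{PTIME}. Thus every branch runs in \textbf{NP} (indeed, the only super-polynomial ingredient is the \textbf{NP} check in $\SL(2, \Z)$ and the guessed subset for the Identity Problem).

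I do not anticipate a serious obstacle in this final theorem itself, since all the heavy lifting is delegated to Propositions~\ref{prop:nonsimid} and \ref{prop:simid} and to Theorem~\ref{thm:SMPSL2}. The one point requiring a little care is the complexity bookkeeping: I must confirm that the \textbf{NP} procedure for ``is $G$ a group'' in $\SL(2, \Z)$ composes correctly with the \textbf{PTIME} steps (Tits alternative, classification into the six cases, the \textbf{PTIME} decisions in cases (iv) and (vi)), and that for the Identity Problem the outer guess of $\mH$ does not blow up — it does not, as $\card(\mH) \le K$ and the entries of the matrices in $\mH$ are a sub-multiset of the input entries. It is also worth noting explicitly that the nondeterministic guessing only ever happens at the $\SL(2, \Z)$ level (inside Theorem~\ref{thm:SMPSL2}) and at the subset-selection level, so the overall bound is genuinely \textbf{NP} and not merely decidable. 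Combining the lower and upper bounds gives \textbf{NP}-completeness of both problems in $\SA$.
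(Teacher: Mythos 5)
Your proposal follows the paper's own proof essentially step for step (hardness via the embedding $A \mapsto (A, \bzer)$ of $\SL(2,\Z)$, the $\mathbf{NP}$ upper bound via Lemma~\ref{lem:isgrp}, Theorem~\ref{thm:SMPSL2}, the effective Tits alternative, and Propositions~\ref{prop:nonsimid} and~\ref{prop:simid}, with the Identity Problem handled by guessing a subset per Lemma~\ref{lem:grptoid}), and the complexity bookkeeping is right. However, there is one concrete error in your case analysis: you assert that in case (i) of Proposition~\ref{prop:simid} (where $G$ is trivial) the semigroup $\sgmG$ is \emph{never} a group. That is false. When $G$ is trivial we have $\mG = \{(I,\ba_1),\ldots,(I,\ba_K)\}$, and by Proposition~\ref{prop:triv} the semigroup $\sgmG$ is a group if and only if $n_1 \ba_1 + \cdots + n_K \ba_K = \bzer$ has a solution in strictly positive integers --- for instance $\mG = \{(I,(1,0)^{\top}), (I,(-1,0)^{\top})\}$ generates a group containing $(I,\bzer)$. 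An algorithm that rejects outright in case (i) therefore returns wrong answers on such instances. The fix costs nothing: Proposition~\ref{prop:simid} already asserts PTIME decidability in case (i) as well (via linear programming), so you should list case (i) alongside (iv) and (vi) as a ``decide in PTIME'' branch rather than a ``reject'' branch. With that one-line correction your argument coincides with the paper's.
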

\begin{proof}
    The NP-hard lower bounds come from the NP-completeness of both problems in the subgroup $\SL(2, \Z) \cong \{(A, \bzer) \mid A \in \SL(2, \Z)\} \leq \SA$ (see Theorem~\ref{thm:SMPSL2}).
    
    To show decidability and the NP upper bounds, we first solve the Group Problem.
    Let $\mG = \{(A_1, \ba_1), \ldots, (A_K, \ba_K)\}$ be a set in $\SA$.
    As a first step, we can check whether $\langle A_1, \ldots, A_K \rangle$ is a group in NP (Theorem~\ref{thm:SMPSL2}).
    If $\langle A_1, \ldots, A_K \rangle$ is not a group, then $\sgmG$ is not a group by Lemma~\ref{lem:isgrp}.
    
    Suppose now that $\langle A_1, \ldots, A_K \rangle$ is a group.
    As the second step, we check in PTIME whether $\langle A_1, \ldots, A_K \rangle$ contains a non-abelian free subgroup using Theorem~\ref{thm:Tits}.
    If \linebreak[5] $\langle A_1, \ldots, A_K \rangle$ contains a non-abelian free subgroup, then the Group Problem has a positive answer by Proposition~\ref{prop:nonsimid}.
    Otherwise, $\langle A_1, \ldots, A_K \rangle$ is virtually solvable, and we can decide the Group Problem in PTIME using Proposition~\ref{prop:simid}.
    In total, the Group Problem for $\mG$ can be decided in NP.
    
    Next we solve the Identity Problem.
    Note that by Lemma~\ref{lem:grptoid}, $I \in \sgmG$ if and only if there exists a non-empty subset $\mH$ of $\mG$ such that the semigroup $\langle \mH \rangle$ is a group.
    Therefore, it suffices to guess a non-empty subset $\mH$ of $\mG$, and check whether $\langle \mH \rangle$ is a group.
    This can be done in NP using the above procedure for the Group Problem.
\end{proof}

\begin{algorithm}[ht!]
\caption{Deciding the Group Problem for a subset of $\SA$.}
\label{alg:GP}
\begin{description}
\item[Input] 
A subset $\mathcal{G} = \{(A_1, \ba_1), \ldots, (A_K, \ba_K)\}$ of $\SA$.
\item[Output] \textbf{True} or \textbf{False}.
\end{description}
\begin{enumerate}[label = \arabic*.]
    \item Decide whether the semigroup $\langle A_1, \ldots, A_K \rangle$ is a group by Theorem~\ref{thm:SMPSL2}.
    If not a group, return \textbf{False}. (Lemma~\ref{lem:isgrp})
    \item Let $G \coloneqq \langle A_1, \ldots, A_K \rangle \leq \SL(2, \Z)$, decide for $G$ which case of Theorem~\ref{thm:Tits} is true.
    \item If $G$ contains a non-abelian free subgroup, return \textbf{True}. (Proposition~\ref{prop:nonsimid})
    \item If $G$ is virtually solvable, decide which case of Proposition~\ref{prop:simid} is true using Lemma~\ref{lem:decisoZ}.
    \begin{enumerate}[label = (\roman*)]
        \item If $G$ is trivial, decide whether $n_1 \ba_1 + \cdots + n_K \ba_K = \bzer$ has a solution over $\Zpp^K$.
        If yes, return \textbf{True}, otherwise return \textbf{False}. (Proposition~\ref{prop:triv})
        \item If $G$ contains a torsion element, return \textbf{True}. (Proposition~\ref{prop:torid})
        \item If $G = \langle A \rangle_{grp}$, where $A$ is a twisted inversion, return \textbf{True}. (Proposition~\ref{prop:rev})
        \item If $G = \langle A \rangle_{grp}$, where $A$ is a shear, compute the set $\varphi(\mG)$ defined by Equation~\eqref{eq:defphi}.
        Decide whether $\langle \varphi(\mG) \rangle$ is a group by Theorem~\ref{thm:H3}.
        If yes, return \textbf{True}; if not, return \textbf{False}. (Corollary~\ref{cor:shear})
        \item If $G = \langle A \rangle_{grp}$, where $A$ is a inverting scale, return \textbf{True}. (Proposition~\ref{prop:le0})
        \item If $G = \langle A \rangle_{grp}$, where $A$ is a positive scale, compute the set $\mS$ defined by Equation~\eqref{eq:defS}.
        Decide whether the condition in Proposition~\ref{prop:idtocells} is satisfied for $\mS$.
        If yes, return \textbf{True}; if not, return \textbf{False}. (Corollary~\ref{cor:simdec})
    \end{enumerate}
\end{enumerate}
\end{algorithm}

\section{Non-abelian free subgroup}\label{sec:nonsim}
In this section we prove Proposition~\ref{prop:nonsimid}:
\propnonsimid*

The proof depends on two lemmas concerning the effect of ``pumping'' a word $(A, \ba) (B, \bb)$, where $(A, \ba) \cdot (B, \bb) = (I, \bx)$ for some $\bx \in \Z^2$.
Let $A$ be a scale with $\Lat(A) = \{V, W\}$. 
Since $V, W$ are distinct one dimensional subspaces of $\R^2$, every element $\bx \in \R^2$ can be written uniquely as $\bx = \bx_V + \bx_W$, where $\bx_V \in V, \bx_W \in W$.
We will adopt this notation in the following lemma.
For an element $\byy \in \R^2$, $\byy_V, \byy_W$ are defined similarly.
We use $\| \cdot \|$ to denote the Euclidean norm.

\begin{restatable}{lem}{lemscalelim}\label{lem:scalelim}
    Let $(A, \ba), (B, \bb)$ be elements of $\SA$ such that $(A, \ba) \cdot (B, \bb) = (I, \bx)$ for some $\bx \in \Z^2$.
    Suppose $A$ is a scale; denote by $V, W$ the elements of $\Lat(A)$, and suppose $\bx \notin V \cup W$.
    Let $\bv$ be any non-zero vector in the subspace $V$.
    
    Then for every $\varepsilon \in (0, 1)$, there exists a word $w \in \{(A, \ba), (B, \bb)\}^*$, such that $(A, \ba) \cdot w \cdot (B, \bb) = (I, \byy)$, where $\byy \in \Z^2$ satisfies
    \begin{equation}\label{eq:lim}
        1 - \frac{|\bv^{\top} \byy|}{\|\bv\| \|\byy\|} < \varepsilon, \quad \byy_V^{\top} \bx_V > 0, \quad \byy_W^{\top} \bx_W > 0.
    \end{equation}
    In other words, the acute angle $\theta$ between $\byy$ and $V$ satisfies $1 - \cos \theta < \varepsilon$.
    Also, $\byy$ and $\bx$ lie in the same cone out of the four cut out by $V$ and $W$.
    See Figure~\ref{fig:scalelim} for an illustration.
\end{restatable}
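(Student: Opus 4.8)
I would build the word $w$ explicitly, by iterating the linear action of $A$ on the translation component until the resulting translation vector points almost exactly along $V$. From $(A,\ba)\cdot(B,\bb)=(AB,\,A\bb+\ba)=(I,\bx)$ one reads off that $B=A^{-1}$ and $A\bb+\ba=\bx$. A one-line computation then gives, for every $\bc\in\Z^2$,
\[
(A,\ba)\cdot(I,\bc)\cdot(B,\bb)=\bigl(I,\ \bx+A\bc\bigr),
\]
so sandwiching a pure translation between $(A,\ba)$ and $(B,\bb)$ applies $A$ to it and adds $\bx$. The plan is to exploit this: since $(A,\ba)(B,\bb)$ is itself a pure translation, we can repeatedly "replay" the linear map $A$ while remaining on a loop that closes back up to a pure translation.

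Concretely, I take the family $w_j\coloneqq (A,\ba)^{\,j-1}(B,\bb)^{\,j-1}\in\{(A,\ba),(B,\bb)\}^{*}$ for $j\ge 1$. A short induction on $j$, using the displayed identity together with the base case $(A,\ba)(B,\bb)=(I,\bx)$, yields
\[
(A,\ba)\cdot w_j\cdot(B,\bb)=(A,\ba)^{j}(B,\bb)^{j}=\Bigl(I,\ \sum_{m=0}^{j-1}A^{m}\bx\Bigr).
\]
If $V$ happens to be the compressing direction of $A$ rather than the stretching one, I instead use the mirrored family $w_j'\coloneqq (B,\bb)^{j}(A,\ba)^{j}$; the analogous computation gives $(A,\ba)\,w_j'\,(B,\bb)=\bigl(I,\ \bx+\sum_{m=0}^{j-1}A^{-m}\bx\bigr)$, which iterates $A^{-1}$, and $A^{-1}$ stretches along $V$. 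So assume from now on that $V$ is the stretching direction, with $A\bv=\lambda\bv$ for $\bv\in V$ and $|\lambda|>1$; since $A$ is a scale, $\lambda\in\R$, and $\lambda<-1$ precisely when $A$ is an inverting scale.

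For the asymptotics, write $\bx=\bx_V+\bx_W$ with $\bx_V\in V$, $\bx_W\in W$. This is the one place the hypothesis $\bx\notin V\cup W$ is used: it forces $\bx_V\neq\bzer$ and $\bx_W\neq\bzer$. Then $\by\coloneqq\sum_{m=0}^{j-1}A^{m}\bx=c_j\bx_V+d_j\bx_W$ with $c_j=\frac{\lambda^{j}-1}{\lambda-1}$ and $d_j=\frac{1-\lambda^{-j}}{1-\lambda^{-1}}$. Since $|\lambda|>1$, we have $|c_j|\to\infty$ while $(d_j)_j$ stays bounded, so $\by/|\by|\to\pm\,\bx_V/|\bx_V|$; as $\bv$ is a nonzero scalar multiple of $\bx_V$ (the space $V$ being one-dimensional), this gives $|\bv^{\top}\by|/(|\bv|\,|\by|)\to 1$, hence the angle condition of \eqref{eq:lim} for all large $j$. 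For the sign conditions, one checks that $c_j>0$ and $d_j>0$ whenever $\lambda>1$, and also whenever $\lambda<-1$ and $j$ is odd; restricting to odd $j$ in the inverting-scale case, $\by_V=c_j\bx_V$ and $\by_W=d_j\bx_W$ yield $\by_V^{\top}\bx_V>0$ and $\by_W^{\top}\bx_W>0$. Taking $j$ large enough — and odd, when $A$ is an inverting scale — completes the proof.

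The main obstacle is the second step: spotting the right family of words and verifying that $(A,\ba)^{j-1}(B,\bb)^{j-1}$, sandwiched between $(A,\ba)$ and $(B,\bb)$, telescopes to $(I,\sum_{m<j}A^{m}\bx)$, so that the scaling action of $A$ can be iterated arbitrarily often without leaving the set of loops returning to a pure translation. The remaining delicate points are the parity restriction on $j$ for inverting scales and the switch to the mirrored word $w_j'$ when $\bv$ lies in the compressing direction; the limit computation itself is routine.
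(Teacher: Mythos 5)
Your proposal is correct and follows essentially the same route as the paper's proof: the same word families $(A,\ba)^{j-1}(B,\bb)^{j-1}$ and $(B,\bb)^{j}(A,\ba)^{j}$ (depending on whether $V$ is stretching or compressing), the same telescoping to $(I,\sum_m A^{m}\bx)$, the same decomposition $\by = c_j\bx_V + d_j\bx_W$ with $|c_j|\to\infty$ and $d_j$ bounded, and the same restriction to odd exponents to secure the sign conditions for inverting scales.
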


    \begin{figure}[ht!]
        \centering
        \begin{minipage}[t]{.45\textwidth}
            \centering
            \includegraphics[width=\textwidth, keepaspectratio, trim={4.5cm 0cm 4cm 0.5cm},clip]{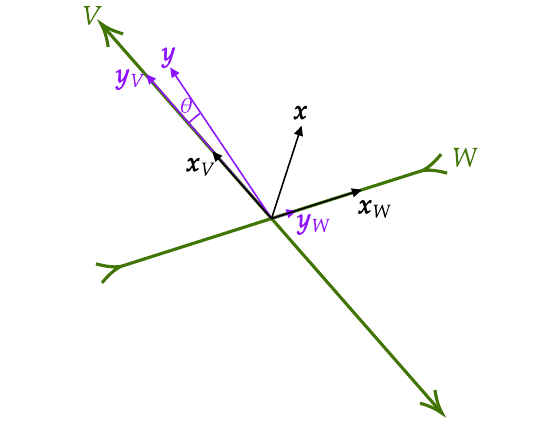}
            \caption{Illustration for Lemma~\ref{lem:scalelim}.}
            \label{fig:scalelim}
        \end{minipage}
        \hfill
        \begin{minipage}[t]{0.45\textwidth}
            \centering
            \includegraphics[width=\textwidth, keepaspectratio, trim={4.5cm 0.5cm 4cm 0cm},clip]{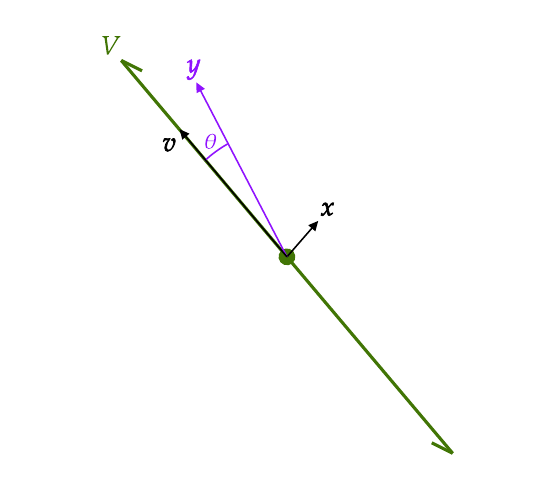}
            \caption{Illustration for Lemma~\ref{lem:shearlim}.}
            \label{fig:shearlim}
        \end{minipage}
    \end{figure}

\begin{proof}
    Since $(A, \ba) \cdot (B, \bb) = (I, \bx)$, we have $B = A^{-1}$ and $\bx = A \bb + \ba$. In particular, we have $\Lat(A) = \Lat(B)$.

    Let $\lambda$ be the eigenvalue of $A$ associated to the invariant subspace $V$, then $\lambda^{-1}$ is the eigenvalue associated to the invariant subspace $W$.    
    Then we have $A^n \bx = \lambda^n \bx_V + \lambda^{-n} \bx_W$ for every integer $n$.
    Consider two cases.
    \begin{enumerate}
        \item $V$ is a stretching direction.
        In this case, $|\lambda| > 1$.
        Let $m > 0$ be a positive integer, and let $w \coloneqq (A, \ba)^{m-1} (B, \bb)^{m-1}$. 
        Consider the product 
        \begin{align}\label{eq:powm}
            & (A, \ba) \cdot w \cdot (B, \bb) \nonumber \\
            = \;&  (A, \ba)^m (B, \bb)^m \nonumber \\
            = \;&  (I, (I + A + \cdots + A^{(m-1)})(A \bb + \ba)) \nonumber \\
            = \;&  (I, (I + A + \cdots + A^{(m-1)})\bx) \nonumber \\
            = \;& \left(I, \sum_{i = 0}^{m-1} \lambda^i \bx_V + \sum_{i = 0}^{m-1} \lambda^{-i} \bx_W \right)
        \end{align}
        Let $\byy \coloneqq \sum_{i = 0}^{m-1} \lambda^i \bx_V + \sum_{i = 0}^{m-1} \lambda^{-i} \bx_W$, then $\byy_V = \sum_{i = 0}^{m-1} \lambda^i \bx_V$, $\byy_W = \sum_{i = 0}^{m-1} \lambda^{-i} \bx_W$.
        Since $\bx \notin V \cup W$, we have $\bx_V \neq \bzer, \bx_W \neq \bzer$.
        When $m$ is odd, we have $\sum_{i = 0}^{m-1} \lambda^i > 0$, $\sum_{i = 0}^{m-1} \lambda^{-i} > 0$, so $\byy_V^{\top} \bx_V > 0$ and $\byy_W^{\top} \bx_W > 0$.

        Note that we always have $\frac{|\bv^{\top} \byy|}{\|\bv\| \|\byy\|} \leq 1$ by the Cauchy-Schwarz inequality.
        We then show that when $m$ tends towards infinity, the value $\frac{|\bv^{\top} \byy|}{\|\bv\| \|\byy\|}$ will tend to one.
        Indeed,
        \begin{align*}
            \frac{|\bv^{\top} \byy|}{\|\bv\| \|\byy\|} = \; & \frac{\left|\sum_{i = 0}^{m-1} \lambda^i \bv^{\top} \bx_V + \sum_{i = 0}^{m-1} \lambda^{-i} \bv^{\top} \bx_W\right|}{\|\bv\| \left\|\sum_{i = 0}^{m-1} \lambda^i \bx_V + \sum_{i = 0}^{m-1} \lambda^{-i} \bx_W \right\|} \\
            \geq \; & \frac{\left|\left(\sum_{i = 0}^{m-1} \lambda^i \right) \bv^{\top} \bx_V \right|}{\|\bv\| \left\|\sum_{i = 0}^{m-1} \lambda^i \bx_V + \sum_{i = 0}^{m-1} \lambda^{-i} \bx_W \right\|} 
             - \frac{\left|\left(\sum_{i = 0}^{m-1} \lambda^{-i}\right) \bv^{\top} \bx_W\right|}{\|\bv\| \left\|\sum_{i = 0}^{m-1} \lambda^i \bx_V + \sum_{i = 0}^{m-1} \lambda^{-i} \bx_W \right\|} \\
            = \; & \frac{\|\bv\| \|\bx_V\|}{\|\bv\| \left\|\bx_V + \lambda^{1 - m} \bx_W \right\|} 
             - \frac{\left|\bv^{\top} \bx_W\right|}{\|\bv\| \left\|\lambda^{m - 1}\bx_V + \bx_W \right\|}
        \end{align*}
        When $m \rightarrow \infty$, the right hand side tends towards $1-0 = 1$.
        This is because $|\lambda| > 1$ and $\bv \neq \bzer, \bx_V \neq \bzer$.
        Hence, for a large enough odd integer $m$, we have
        \begin{equation*}
            1 - \frac{|\bv^{\top} \byy|}{\|\bv\| \|\byy\|} < \varepsilon, \quad \byy_V^{\top} \bx_V > 0, \quad \byy_W^{\top} \bx_W > 0.
        \end{equation*}

        \item $V$ is a compressing direction.
        In this case, $|\lambda| < 1$.
        Let $m > 0$ be a positive integer, and let $w \coloneqq (B, \bb)^{m} (A, \ba)^{m}$. 
        Consider the product 
        \begin{align*}
            & (A, \ba) \cdot w \cdot (B, \bb) \\
            = \;&  (A, \ba) (B, \bb) (B, \bb)^{m-1} (A, \ba)^{m-1} (A, \ba) (B, \bb) \\
            = \;& \left(I, (2I + A^{-1} + \cdots + A^{-(m-1)})(A \bb + \ba)\right) \\
            = \;& \left(I, (2I + A^{-1} + \cdots + A^{-(m-1)})\bx \right) \\
            = \;& \left(I, \left(1 + \sum_{i = 0}^{m-1} \lambda^{-i}\right) \bx_V + \left(1 + \sum_{i = 0}^{m-1} \lambda^{i}\right) \bx_W \right)
        \end{align*}
        Let $\byy \coloneqq \left(1 + \sum_{i = 0}^{m-1} \lambda^{-i}\right) \bx_V + \left(1 + \sum_{i = 0}^{m-1} \lambda^{i}\right) \bx_W$, then $\byy_V = \left(1 + \sum_{i = 0}^{m-1} \lambda^{-i}\right) \bx_V$, $\byy_W = \left(1 + \sum_{i = 0}^{m-1} \lambda^{i}\right) \bx_W$.
        Since $\bx \notin V \cup W$, we have $\bx_V \neq \bzer$ and $\bx_W \neq \bzer$.
        When $m$ is odd, we have $\sum_{i = 0}^{m-1} \lambda^i > 0$, $\sum_{i = 0}^{m-1} \lambda^{-i} > 0$, so $\byy_V^{\top} \bx_V > 0, \byy_W^{\top} \bx_W > 0$.
        
        We then show that when $m$ tends towards infinity, the value $\frac{|\bv^{\top} \byy|}{\|\bv\| \|\byy\|} \leq 1$ will tend to one.
        Indeed,
        \begin{align*}
             \frac{\left|\bv^{\top} \byy\right|}{\|\bv\| \|\byy\|}
            = \; & \frac{\left|\left(1 + \sum_{i = 0}^{m-1} \lambda^{-i}\right) \bv^{\top} \bx_V + \left(1 + \sum_{i = 0}^{m-1} \lambda^{i}\right) \bv^{\top} \bx_W\right|}{\|\bv\| \left\|\left(1 + \sum_{i = 0}^{m-1} \lambda^{-i}\right) \bx_V + \left(1 + \sum_{i = 0}^{m-1} \lambda^{i}\right) \bx_W \right\|} \\
            \geq \; & \frac{\|\bv\| \|\bx_V\|}{\|\bv\| \left\|\bx_V + \frac{1 + \sum_{i = 0}^{m-1} \lambda^{i}}{1 + \sum_{i = 0}^{m-1} \lambda^{-i}} \bx_W \right\|} 
            - \frac{\left|\bv^{\top} \bx_W\right|}{\|\bv\| \left\|\frac{1 + \sum_{i = 0}^{m-1} \lambda^{-i}}{1 + \sum_{i = 0}^{m-1} \lambda^{i}} \bx_V + \bx_W \right\|}
        \end{align*}
        When $m \rightarrow \infty$, the right hand side tends towards $1 - 0 = 1$.
        This is because $|\lambda| < 1$ and $\bv \neq \bzer, \bx_V \neq \bzer$, so
        \[
        \lim_{m \rightarrow \infty} \frac{1 + \sum_{i = 0}^{m-1} \lambda^{i}}{1 + \sum_{i = 0}^{m-1} \lambda^{-i}} = 0.
        \]
        Hence, for a large enough odd integer $m$, we have
        \begin{equation*}
            1 - \frac{|\bv^{\top} \byy|}{\|\bv\| \|\byy\|} < \varepsilon, \quad \byy_V^{\top} \bx_V > 0, \quad \byy_W^{\top} \bx_W > 0.
        \end{equation*}
    \end{enumerate}
    Combining the two cases yields the desired result.
\end{proof}

A similar lemma can be proved for shears.
In this case we want the stronger condition $1 - \frac{\bv^{\top} \byy}{\|\bv\| \|\byy\|} < \varepsilon$ instead of $1 - \frac{|\bv^{\top} \byy|}{\|\bv\| \|\byy\|} < \varepsilon$.

\begin{restatable}{lem}{lemshearlim}\label{lem:shearlim}
    Let $(A, \ba), (B, \bb)$ be elements of $\SA$ such that $(A, \ba) \cdot (B, \bb) = (I, \bx)$ for some $\bx \in \Z^2$.
    Suppose $A$ is a shear, $\Lat(A) = \{V\}$, and $\bx \notin V$.
    Let $\bv$ be any non-zero vector in the subspace $V$.
    
    Then for every $\varepsilon \in (0, 1)$, there exists a word $w \in \{(A, \ba), (B, \bb)\}^*$, such that $(A, \ba) \cdot w \cdot (B, \bb) = (I, \byy)$, where $\byy \in \Z^2$ satisfies
    \begin{equation}\label{eq:shearlimA}
        1 - \frac{\bv^{\top} \byy}{\|\bv\| \|\byy\|} < \varepsilon,
    \end{equation}
    and $\byy$ and $\bx$ lie in same halfspace cut by $V$.
    In other words, the angle $\theta$ between $\byy$ and $\bv$ satisfies $1 - \cos \theta < \varepsilon$.
    See Figure~\ref{fig:shearlim} for an illustration.
\end{restatable}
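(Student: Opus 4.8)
The plan is to reuse the mechanism indicated above for Lemma~\ref{lem:scalelim}: take $w$ to be one of the two balanced words $(A,\ba)^m(B,\bb)^m$ or $(B,\bb)^m(A,\ba)^m$ for a large parameter $m$, compute the resulting translation vector in closed form, and check that it satisfies~\eqref{eq:shearlimA} together with the half-space condition. First I would record the algebraic consequences of $(A,\ba)\cdot(B,\bb)=(I,\bx)$, namely $B=A^{-1}$ and $A\bb+\ba=\bx$; note also that the linear part of $(A,\ba)\cdot w\cdot(B,\bb)$ is $A\,L_w\,A^{-1}$, which equals $I$ only when the linear part $L_w$ of $w$ is $I$, so $w$ must contain equally many copies of $(A,\ba)$ and of $(B,\bb)$. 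Since $A$ is a shear I would write $A=I+N$ with $N\neq 0$ and $N^2=0$; then $V=\ker N$ equals the image of $N$, hence $N\bx\in V$, and $N\bx\neq\bzer$ precisely because $\bx\notin V$. From $A^{k}=I+kN$ one obtains $\sum_{k=0}^{m}A^{k}=(m+1)I+\binom{m+1}{2}N$.

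The main computation is then short. For $w=(A,\ba)^m(B,\bb)^m$ the product telescopes, $(A,\ba)\cdot w\cdot(B,\bb)=(A,\ba)^{m+1}(B,\bb)^{m+1}$, and expanding the translation part gives $\by_m=\bigl(\sum_{k=0}^{m}A^{k}\bigr)\bx=(m+1)\bx+\binom{m+1}{2}N\bx$; dividing by $\binom{m+1}{2}$ and letting $m\to\infty$ shows that the direction of $\by_m$ converges to that of $N\bx$ inside $V$. A parallel (slightly longer) computation for $w=(B,\bb)^m(A,\ba)^m$ yields $\by'_m=(m+1)\bx-\binom{m}{2}N\bx$, whose direction converges instead to that of $-N\bx$. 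In either case, applying a linear functional $\ell$ vanishing on $V$ gives $\ell(\by_m)=(m+1)\ell(\bx)\neq 0$, so $\by_m\neq\bzer$ and $\by_m$ lies in the same open half-space cut by $V$ as $\bx$.

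To finish, observe that the prescribed nonzero $\bv\in V$ is either a positive or a negative multiple of $N\bx$, and select whichever of the two words above has limiting direction equal to the positive ray spanned by $\bv$; then $\bv^{\top}\by_m/(|\bv|\,|\by_m|)\to 1$ as $m\to\infty$, so $1-\bv^{\top}\by_m/(|\bv|\,|\by_m|)<\varepsilon$ holds for all sufficiently large $m$, and we fix such an $m$. I expect the one genuine subtlety here to be exactly this sign issue: a single fixed word always drives the translation toward one fixed ray of $V$, which may be antipodal to the given $\bv$, and the point is that reversing the order of the two blocks flips the limiting ray, so that an appropriate choice works for every $\bv$. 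Everything else is routine manipulation using $A=I+N$ and $N^2=0$.
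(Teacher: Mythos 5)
Your proposal is correct and follows essentially the same route as the paper's proof: both use the balanced words $(A,\ba)^m(B,\bb)^m$ and $(B,\bb)^m(A,\ba)^m$, compute the translation part as $\bigl(\sum_k A^{\pm k}\bigr)\bx$ via the nilpotent structure of the shear, and choose between the two words according to whether the limiting ray ($\pm N\bx$, i.e.\ the paper's $\operatorname{sgn}(\mu c)$) agrees with the prescribed $\bv$. Your coordinate-free formulation with $A = I + N$ is just a repackaging of the paper's explicit basis $\{\bv,\bw\}$ computation, and the sign subtlety you flag is exactly the case split the paper performs.
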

\begin{proof}
    Since $(A, \ba) \cdot (B, \bb) = (I, \bx)$, we have $B = A^{-1}$ and $\bx = A \bb + \ba$.

    Let $W$ be the orthogonal space of $V$, and $\bw$ be a non-zero vector in $W$.
    Under the basis $\{\bv, \bw\}$, the matrix $A$ has the form 
    $
    \begin{pmatrix}
        1 & \mu \\
        0 & 1 \\
    \end{pmatrix}
    $,
    where $\mu \neq 0$.
    Then for every integer $n$, we have $A^n \bx = \bx + n \mu c \bv$, where $c$ is a scalar such that $c \bw = \bx_W$.
    Since $\bx \notin V$, we have $\bx_W \neq \bzer$, so $c \neq 0$.
    Consider two cases.
    \begin{enumerate}
        \item $\mu c > 0$.
        Let $m > 0$ be a positive integer, and let $w \coloneqq (A, \ba)^{m-1} (B, \bb)^{m-1}$. 
        Consider the product 
        \begin{align*}
            & (A, \ba) \cdot w \cdot (B, \bb) \\
            = \;& (A, \ba)^m (B, \bb)^m \\
            = \;& \left(I, (I + A + \cdots + A^{(m-1)})(A \bb + \ba)\right) \\
            = \;& \left(I, (I + A + \cdots + A^{(m-1)})\bx\right) \\
            = \;& \left(I, m\bx + \frac{m(m-1)}{2} \mu c \bv\right).
        \end{align*}
        Let $\byy \coloneqq m\bx + \frac{m(m-1)}{2} \mu c \bv$, then $\byy$ and $\bx$ lie in same halfspace cut by $V$.

        We then show that when $m$ tends towards infinity, $\frac{\bv^{\top} \byy}{\|\bv\| \|\byy\|}$ will tend to one.
        Indeed,
        \begin{align*}
            \frac{\bv^{\top} \byy}{\|\bv\| \|\byy\|} = \; & \frac{m\bv^{\top} \bx + \frac{m(m-1)}{2} \mu c \bv^{\top} \bv}{\|\bv\| \left\|m\bx + \frac{m(m-1)}{2} \mu c \bv \right\|} \\
            = \; & \frac{\bv^{\top} \bx}{\|\bv\| \left\|\bx + \frac{(m-1)}{2} \mu c \bv \right\|} + \frac{\frac{(m-1)}{2} \mu c \|\bv\|}{\left\|\bx + \frac{(m-1)}{2} \mu c \bv \right\|}.
        \end{align*}
        When $m \rightarrow \infty$, this expression tends towards $0 + 1 = 1$, because $\mu c > 0$ and $\bv \neq \bzer$.
        Hence, for a large enough integer $m$, we have
        $
            1 - \frac{\bv^{\top} \byy}{\|\bv\| \|\byy\|} < \varepsilon.
        $

        \item $\mu c < 0$.
        Let $m > 0$ be a positive integer, and let $w \coloneqq (B, \bb)^{m} (A, \ba)^{m}$. 
        Consider the product 
        \begin{align*}
            & (A, \ba) \cdot w \cdot (B, \bb) \\
            = \; &  (A, \ba) (B, \bb) (B, \bb)^{m-1} (A, \ba)^{m-1} (A, \ba) (B, \bb) \\
            = \; & \left(I, (2I + A^{-1} + \cdots + A^{-(m-1)})(A \bb + \ba)\right) \\
            = \; & \left(I, (2I + A^{-1} + \cdots + A^{-(m-1)})\bx\right) \\
            = \; & \left(I, (m+1)\bx - \frac{m(m-1)}{2} \mu c \bv\right).
        \end{align*}
        Let $\byy \coloneqq (m+1)\bx - \frac{m(m-1)}{2} \mu c \bv$, then $\byy$ and $\bx$ lie in same halfspace cut by $V$.

        We then show that when $m$ tends towards infinity, $\frac{\bv^{\top} \byy}{\|\bv\| \|\byy\|}$ will tend to one.
        Indeed,
        \begin{align*}
            \frac{\bv^{\top} \byy}{\|\bv\| \|\byy\|} = \; & \frac{(m+1)\bv^{\top} \bx - \frac{m(m-1)}{2} \mu c \bv^{\top} \bv}{\|\bv\| \left\|(m+1)\bx - \frac{m(m-1)}{2} \mu c \bv \right\|} \\
            = \; & \frac{ \bv^{\top} \bx}{\|\bv\| \left\|\bx - \frac{m(m-1)}{2(m+1)} \mu c \bv \right\|} + \frac{- \frac{m(m-1)}{2(m+1)} \mu c \|\bv\|}{\left\|\bx - \frac{m(m-1)}{2(m+1)} \mu c \bv \right\|}.
        \end{align*}
        When $m \rightarrow \infty$, this expression tends towards $0 + 1 = 1$, because $\mu c < 0$ and $\bv \neq \bzer$.
        Hence, for a large enough integer $m$, we have
        $
            1 - \frac{\bv^{\top} \byy}{\|\bv\| \|\byy\|} < \varepsilon.
        $ \qedhere
    \end{enumerate}
\end{proof}

We now prove Proposition~\ref{prop:nonsimid}.
\propnonsimid*
\begin{proof}
    Suppose that the group $\langle A_1, \ldots, A_K \rangle$ contains a non-abelian free subgroup.
    In particular, it contains a subgroup isomorphic to the free group $F_2$ of two generators.
    Let $A, B$ be elements of $\langle A_1, \ldots, A_K \rangle$ that generate this free group. 
    Since $A, B$ are non-torsion, they are twisted inversions, scales, or shears.
    Hence, $A^2, B^2 \in \langle A_1, \ldots, A_K \rangle$ are \emph{positive} scales or shears.
    Since $A$ and $B$ generate a non-abelian free group, $A^2$ and $B^2$ also generate a non-abelian free group.
    Therefore, we can replace $A, B$ by $A^2, B^2$, and without loss of generality suppose $A$ and $B$ to be positive scales or shears.
    
    Since $\langle A_1, \ldots, A_K \rangle$ is a group, $A$ and $B$ can be represented by full-image words over the alphabet $\{A_1, \ldots, A_K\}$ due to Lemma~\ref{lem:grpword}.
    Hence, let $(A, \ba)$ and $(B, \bb)$ be elements in $\sgmG$ represented by full-image words.

    Note that $A, B$ are not simultaneously triangularizable, otherwise the group they generate is isomorphic to a subgroup of the group of upper-triangular matrices over complex numbers, which is solvable~\cite{beals1999algorithms}.
    This contradicts that fact that $A, B$ generate a non-abelian free group (see Theorem~\ref{thm:Tits}).
    Therefore, $\Lat(A) \cap \Lat(B) = \emptyset$.
    There are three cases to consider: 
    \begin{enumerate}[nosep, label = (\arabic*)]
        \item Both $A$ and $B$ are positive scales.
        \item One of $A$ and $B$ is a shear.
        \item Both $A$ and $B$ are shears.
    \end{enumerate}
    
    \textbf{Case 1. Both $A$ and $B$ are positive scales.}
    Denote $Y = A^{-1} B^{-1} \in \langle A_1, \ldots, A_K \rangle$, we have $AYB = I$.
    Let $\byy \in \Z^2$ be such that $(Y, \byy) \in \sgmG$, and let $\bx \in \Z^2$ be such that $(A, \ba)(Y, \byy)(B, \bb) = (I, \bx)$.
    If $\bx = \bzer$, then $(I, \bzer)$ can be represented as a full-image word over $\mG$, since $(A, \ba)$ can.
    In this case, $\sgmG$ is a group by Lemma~\ref{lem:grpword}.
    Otherwise, there are two subcases.
    Starting from any one-dimensional subspace $S$, rotate $S$ counter-clockwise and consider its sequence of encounters with $\Lat(A)$ and $\Lat(B)$ before finishing a half circle.
    Either the sequence of encounters is a cyclic permutation of $(\Lat(A), \Lat(A), \Lat(B), \Lat(B))$, or it is a cyclic permutation of $(\Lat(A), \Lat(B), \Lat(A), \Lat(B))$.

    \begin{enumerate}[label = (\alph*)]
        \item First consider the case of $(\Lat(A)$, $\Lat(A)$, $\Lat(B)$, $\Lat(B))$.
        An illustration for the proof of this case is shown in Figure~\ref{fig:AABB}.
        Denote by $V_A, W_A, V_B, W_B$ these subspaces in this order.
        Since $\bx \neq \bzer$, we have $\bx \notin V_A \cup W_A$ or $\bx \notin V_B \cup W_B$.
        Without loss of generality suppose $\bx \notin V_B \cup W_B$.

    \begin{figure}[ht!]
        \centering
        \begin{minipage}[t]{.45\textwidth}
            \centering
            \includegraphics[width=\textwidth, keepaspectratio, trim={4.5cm 0.7cm 4cm 0cm},clip]{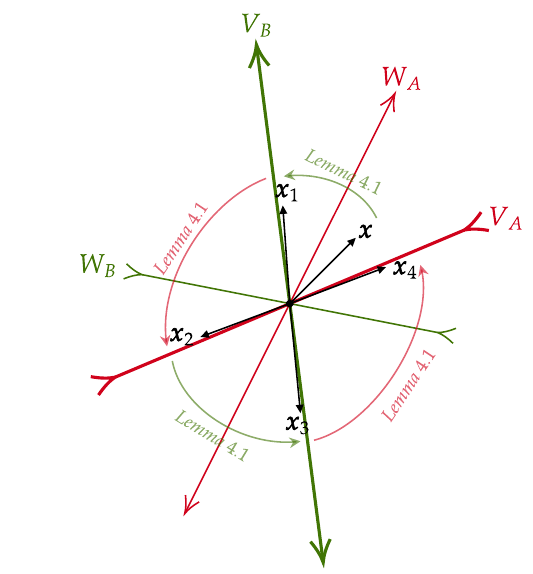}
            \caption{Illustration for case 1(a).}
            \label{fig:AABB}
        \end{minipage}
        \hfill
        \begin{minipage}[t]{0.45\textwidth}
            \centering
            \includegraphics[width=\textwidth, keepaspectratio, trim={4.5cm 0.5cm 4cm 0cm},clip]{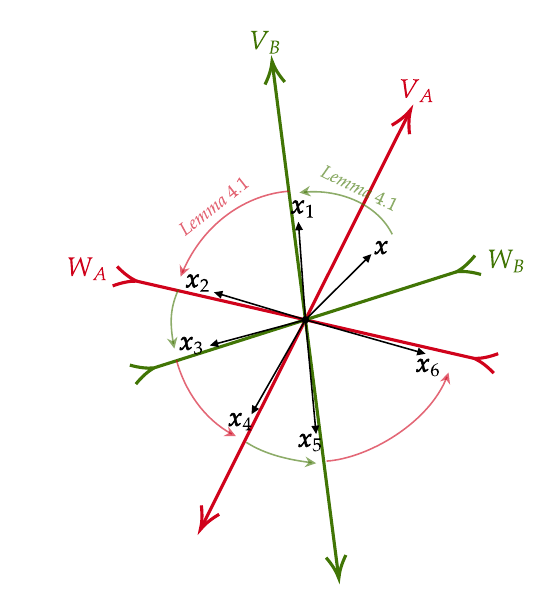}
            \caption{Illustration for case 1(b).}
            \label{fig:ABAB}
        \end{minipage}
        \centering
        \begin{minipage}[t]{.45\textwidth}
            \centering
            \includegraphics[width=\textwidth, keepaspectratio, trim={4.5cm 0.5cm 3.5cm 0cm},clip]{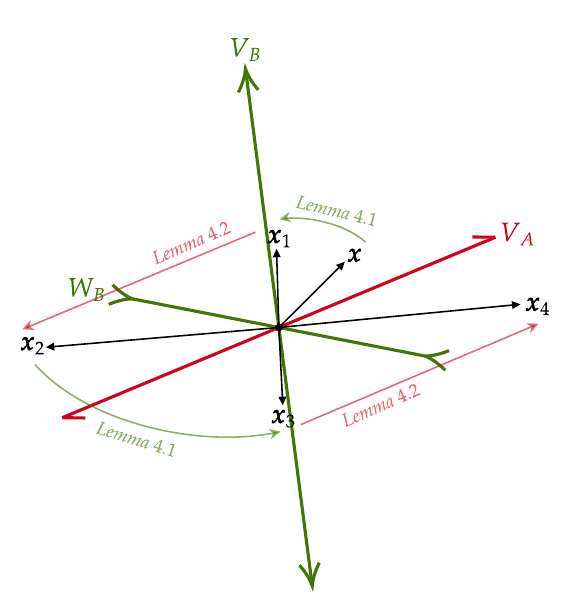}
            \caption{Illustration for case 2.}
            \label{fig:ABB}
        \end{minipage}
        \hfill
        \begin{minipage}[t]{0.45\textwidth}
            \centering
            \includegraphics[width=\textwidth, keepaspectratio, trim={4.5cm 0.7cm 3.5cm 0cm},clip]{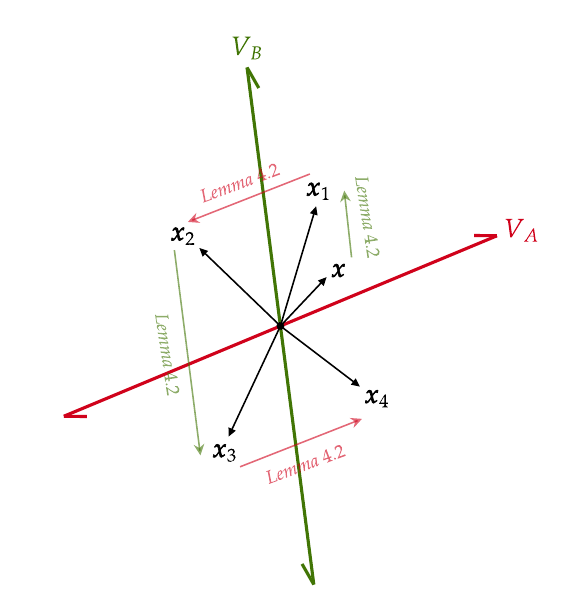}
            \caption{Illustration for case 3.}
            \label{fig:AB}
        \end{minipage}
    \end{figure}
    
        Let $\varepsilon > 0$.
        Apply Lemma~\ref{lem:scalelim} to $\varepsilon$, to the elements $(A', \ba') \coloneqq (A, \ba) (Y, \byy)$ and $(B, \bb)$, and to the subspaces $V_B, W_B$ of $\Lat(B) = \Lat(A')$.
        (Note that $A' = B^{-1}$, so $A'$ is a positive scale and $\Lat(A') = \Lat(B)$.)
        Since $(A', \ba')(B, \bb) = (A, \ba)(Y, \byy)(B, \bb) = (I, \bx)$, Lemma~\ref{lem:scalelim} shows there exists an element $w \in \sgmG$ such that $(A', \ba') \cdot w \cdot (B, \bb) = (I, \bx_1)$, with 
        \begin{equation}\label{eq:x1}
            1 - \frac{|\bv_B^{\top} \bx_1|}{\|\bv_B\| \|\bx_1\|} < \varepsilon, \; \left(\bx_1\right)_{V_B}^{\top} \bx_{V_B} > 0, \; \left(\bx_1\right)_{W_B}^{\top} \bx_{W_B} > 0.
        \end{equation}
        Here, $\bv_B$ is a non-zero vector in $V_B$.
        In other words, when $\varepsilon$ is sufficiently small, the acute angle between $\bx_1$ and $V_B$ is also sufficiently small.
        Also, $\bx_1$ and $\bx$ lie in the same cone out of the four cut by $V_B$ and $W_B$.
        This gives us an element $(Y_1, \byy_1) \coloneqq (Y, \byy) \cdot w \in \sgmG$ such that $(A, \ba)(Y_1, \byy_1)(B, \bb) = (I, \bx_1)$ with $\bx_1$ satisfying \eqref{eq:x1}, see Figure~\ref{fig:AABB}.

        Next, apply Lemma~\ref{lem:scalelim} to $\varepsilon$, to the elements $(A, \ba)$ and $(B', \bb') \coloneqq (Y_1, \byy_1)(B, \bb)$ of $\SA$, and to the subspaces $V_A, W_A$ in $\Lat(A)$.
        Same as above, we obtain an element $(Y_2, \byy_2) \in \mG$ such that $(A, \ba)(Y_2, \byy_2)(B, \bb) = (I, \bx_2)$ with $\bx_2$ satisfying
        \begin{equation}\label{eq:x2}
            1 - \frac{|\bv_A^{\top} \bx_2|}{\|\bv_A\| \|\bx_2\|} < \varepsilon, \; \left(\bx_2\right)_{V_A}^{\top} \bx_{V_A} > 0, \; \left(\bx_2\right)_{W_A}^{\top} \cdot \bx_{W_A} > 0.
        \end{equation}
        Here, $\bv_A$ is a non-zero vector in $V_A$.
        In other words, when $\varepsilon$ is sufficiently small, the acute angle between $\bx_2$ and $V_A$ is also sufficiently small.
        Hence, one can take $\varepsilon$ such that $\bx_2$ and $- \bx_1$ lie in the same cone out of the four cut by $V_B$ and $W_B$.
        Furthermore, $\bx_2$ and $\bx_1$ lie in the same cone out of the four cut by $V_A$ and $W_A$.

        We follow this pattern and apply Lemma~\ref{lem:scalelim} again on $(A', \ba') \coloneqq (A, \ba) (Y_2, \byy_2)$ and $(B, \bb)$, and to the subspaces $V_B, W_B$.
        This yields an element $(Y_3, \byy_3) \in \sgmG$ such that $(A, \ba)(Y_3, \byy_3)(B, \bb) = (I, \bx_3)$ with $\bx_3$ very close to $V_B$, but lies in the same cone cut by $V_B$ and $W_B$ as $\bx_2$.

        Finally we apply Lemma~\ref{lem:scalelim} again on $(A, \ba)$ and $(B', \bb') \coloneqq (Y_3, \byy_3)(B, \bb)$, and to the subspaces $V_A, W_A$.
        This yields $(Y_4, \byy_4) \in \sgmG$ such that $(A, \ba)(Y_4, \byy_4)(B, \bb) = (I, \bx_4)$ with $\bx_4$ very close to $V_A$, but lies in the same cone cut by $V_A$ and $W_A$ as $\bx_3$.

        When $\varepsilon$ is small enough, the angles of the vectors $\bx_1, \bx_2, \bx_3, \bx_4$ are sufficiently close to $V_B, V_A, V_B, V_A$, with opposing directions.
        Hence, they generate $\Q^2$ as a $\Qp$-cone.
        In other words, there exist \emph{positive} integers $n_1, n_2, n_3, n_4$ such that $n_1\bx_1 + n_2\bx_2 + n_3\bx_3 + n_4\bx_4 = \bzer$.
        Therefore,
        \[
        (I, \bx_1)^{n_1} (I, \bx_2)^{n_2} (I, \bx_3)^{n_3} (I, \bx_4)^{n_4} = (I, \bzer).
        \]
        Since the element $(A, \ba)$ can be represented as a full-image word over $\mG$, the element $(I, \bx_1)$ and hence $(I, \bzer)$ can be represented by a full-image word as well.
        Therefore, $\sgmG$ is a group by Lemma~\ref{lem:grpword}.
        
        \item Next consider the case $(\Lat(A)$, $\Lat(B)$, $\Lat(A)$, $\Lat(B))$. 
        Denote by $V_A, V_B, W_A, W_B$ these subspaces in this order.
        Without loss of generality suppose $\bx \notin V_B \cup W_B$.
        The strategy is exactly the same as the previous case (see Figure~\ref{fig:ABAB}).
        However, in the present case, we need to apply Lemma~\ref{lem:scalelim} for a total of six times, where $V$ will be the subspaces $V_B, W_A, W_B, V_A, V_B, W_A$ respectively.

        In this way, we obtain $(I, \bx_1)$, $(I, \bx_2)$, $(I, \bx_3)$, $(I, \bx_4)$, $(I, \bx_5)$, $(I, \bx_6)$ with $\bx_1, \ldots, \bx_6$ generating $\Q^2$ as a $\Qp$-cone.
        Hence, there exist positive integers $n_1, \ldots, n_6$ such that
        \[
        (I, \bx_1)^{n_1} \cdots (I, \bx_6)^{n_6} = (I, \bzer).
        \]
        (In fact, the four vectors $\bx_1, \bx_2, \bx_5, \bx_6$ would already suffice).
        Similarly, since the element $(A, \ba)$ (and hence $(I, \bzer)$) can be represented as a full-image word over $\mG$, the semigroup $\sgmG$ is a group by Lemma~\ref{lem:grpword}.
    \end{enumerate}
    This concludes case 1.
    
    \textbf{Case 2. One of $A$ and $B$ is a shear.}
    The approach is similar to the previous cases, but we have to apply Lemma~\ref{lem:scalelim} and Lemma~\ref{lem:shearlim} alternately.
    See Figure~\ref{fig:ABB} for an illustration.
    
    Without loss of generality, let $A$ be the shear and $B$ be the positive scale.
    Starting from any one-dimension subspace $S$, rotate $S$ counter-clockwise and consider its sequence of encounters with $\Lat(A)$ and $\Lat(B)$ before finishing a full cycle.
    The sequence of encounters must be a cyclic permutation of $(\Lat(A), \Lat(B), \Lat(B))$.
    Denote by $V_A, V_B, W_B$ be these subspaces in this order.
    Without loss of generality suppose $\bx \notin V_B \cup W_B$, the case where $\bx \notin V_A$ is analogous.
    
    For a small enough $\varepsilon$, apply Lemma~\ref{lem:scalelim} to $\bx$ to obtain $\bx_1$ that is sufficiently close to $V_B$; then apply Lemma~\ref{lem:shearlim} to $\bx_1$ to obtain $\bx_2$ that is sufficiently close to $V_A$ and therefore lies in the same cone cut by $V_B$ and $W_B$ as $-\bx_1$; then apply Lemma~\ref{lem:scalelim} to $\bx_2$ to obtain $\bx_3$ that is sufficiently close to $V_B$ and therefore lies in a different halfspace cut by $V_A$ than $\bx_2$; finally, apply Lemma~\ref{lem:shearlim} to $\bx_3$ to obtain $\bx_4$ that is sufficiently close to $V_A$ and lies in the same cone cut by $V_B$ and $W_B$ as $-\bx_3$.
    In this way, we obtain $(I, \bx_1), (I, \bx_2), (I, \bx_3), (I, \bx_4)$ with $\bx_1, \ldots, \bx_4$ generating $\Q^2$ as a $\Qp$-cone.
    Hence, there exist positive integers $n_1, \ldots, n_4$ such that
    \[
    (I, \bx_1)^{n_1} \cdots (I, \bx_4)^{n_4} = (I, \bzer).
    \]
    Since the element $(A, \ba)$ (and hence $(I, \bzer)$) can be represented as a full-image word over $\mG$, the semigroup $\sgmG$ is a group.

    \textbf{Case 3. Both $A$ and $B$ are shears.}
    The approach is similar to the previous cases, but we have to apply Lemma~\ref{lem:shearlim} only.
    See Figure~\ref{fig:AB} for an illustration.
    
    Denote by $V_A, V_B$ respectively the elements of $\Lat(A)$ and $\Lat(B)$.
    Without loss of generality suppose $\bx \notin V_B$, the case where $\bx \notin V_A$ is analogous.
    For a small enough $\varepsilon$, apply Lemma~\ref{lem:shearlim} to $\bx$ to obtain $\bx_1$ that is sufficiently close to $V_B$; then apply Lemma~\ref{lem:shearlim} again to $\bx_1$ to obtain $\bx_2$ that is sufficiently close to $V_A$ and lies in a different halfspace cut by $V_B$ than $\bx_1$; then apply Lemma~\ref{lem:shearlim} to $\bx_2$ to obtain $\bx_3$ that is sufficiently close to $V_B$ and lies in a different halfspace cut by $V_A$ than $\bx_2$; finally, apply Lemma~\ref{lem:shearlim} to $\bx_3$ to obtain $\bx_4$ that is sufficiently close to $V_A$ and lies in a different halfspace cut by $V_B$ than $\bx_3$.
    In this way, we obtain $(I, \bx_1), (I, \bx_2), (I, \bx_3), (I, \bx_4)$ with $\bx_1, \ldots, \bx_4$ generating $\Q^2$ as a $\Qp$-cone.
    Hence, there exist positive integers $n_1, \ldots, n_4$ such that
    \[
    (I, \bx_1)^{n_1} \cdots (I, \bx_4)^{n_4} = (I, \bzer).
    \]        
    Since the element $(A, \ba)$ (and hence $(I, \bzer)$) can be represented as a full-image word over $\mG$, the semigroup $\sgmG$ is a group.
\end{proof}

\section{Virtual solvability}\label{sec:sim}
In this section we prove Proposition~\ref{prop:simid}:
\propsimid*

As in the statement of the proposition, we fix a set $\mG = \{(A_1, \ba_1), \ldots, (A_K, \ba_K)\}$ of elements in $\SA$, such that $H \coloneqq \langle A_1, \ldots, A_K \rangle$ is a virtually solvable group.

\begin{lem}\label{lem:solvZ}
    Let $H$ be a finitely generated virtually solvable subgroup of $\SL(2, \Z)$. Then $H$ is either finite, or it contains a finite index subgroup $H'$ isomorphic to $\Z$.
\end{lem}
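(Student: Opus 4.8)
The plan is to combine the virtual freeness of $\SL(2, \Z)$ with the fact that a solvable free group must be trivial or infinite cyclic. First I would invoke Theorem~\ref{thm:vf} to fix a finite-index free subgroup $F = F(\{S, T\}) \leq \SL(2, \Z)$, and use the hypothesis that $G$ is virtually solvable to fix a solvable subgroup $S_0 \leq G$ of finite index. The object to study is the intersection $N \coloneqq S_0 \cap F$.

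The next step is index bookkeeping: since $S_0 \leq \SL(2, \Z)$ and $[\SL(2, \Z) : F] < \infty$, the index $[S_0 : S_0 \cap F]$ is at most $[\SL(2, \Z) : F]$, hence finite; together with $[G : S_0] < \infty$ this yields $[G : N] < \infty$. Then I would make two structural observations about $N$: being a subgroup of the free group $F$, it is itself free by Nielsen--Schreier (Theorem~\ref{thm:freesub}); being a subgroup of the solvable group $S_0$, it is solvable, since subgroups of solvable groups are solvable. Writing $N \cong F(\Sigma)$, if $|\Sigma| \geq 2$ then $N$ contains a copy of the non-abelian free group on two generators, whose derived series never reaches the trivial group (the commutator subgroup of a non-abelian free group is again non-abelian, so every term of the derived series is a nontrivial free group), contradicting solvability of $N$. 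Hence $N$ is trivial or isomorphic to $\Z$.

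Finally, if $N$ is trivial then $|G| = [G : N] < \infty$ and $G$ is finite; if $N \cong \Z$ then $H \coloneqq N$ is the desired finite-index subgroup isomorphic to $\Z$, which completes the dichotomy. I expect the only mildly delicate point to be the classification of solvable free groups -- concretely, the assertion that a non-abelian free group is not solvable, which rests on the classical fact that the commutator subgroup of $F(\{a, b\})$ is non-abelian (indeed free of infinite rank); alternatively one may simply cite this. Everything else is routine manipulation of subgroup indices.
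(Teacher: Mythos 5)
Your proof is correct, and it follows the same skeleton as the paper's: intersect with the finite-index free subgroup $F$ of $\SL(2,\Z)$ from Theorem~\ref{thm:vf}, apply Nielsen--Schreier, and conclude the intersection has rank at most one. The genuine difference is in which group you intersect with $F$ and, consequently, which tool closes the argument. The paper takes $H \coloneqq F \cap G$ directly; this $H$ is free and only \emph{virtually} solvable, so the paper must invoke the Tits alternative (Theorem~\ref{thm:Tits}) to rule out a non-abelian free subgroup of $H$ and conclude $H$ is abelian. You instead pass first to a solvable finite-index subgroup $S_0 \leq G$ and set $N \coloneqq S_0 \cap F$, which is free and \emph{actually} solvable; then the elementary fact that a non-abelian free group has a non-terminating derived series (its commutator subgroup being non-abelian free of infinite rank) suffices, and no appeal to Tits is needed. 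The price is one extra layer of index bookkeeping, $[G:N] = [G:S_0]\,[S_0:N] < \infty$, which you handle correctly via $[S_0 : S_0 \cap F] \leq [\SL(2,\Z) : F]$. Your route is the more self-contained of the two; the paper's is marginally shorter because Theorem~\ref{thm:Tits} is already set up and used elsewhere.
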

\begin{proof}
    By Theorem~\ref{thm:vf}, let $F \leq \SL(2, \Z)$ be a free subgroup over two generators, such that the index $[\SL(2, \Z) : F]$ is finite.
    Then $[H : F \cap H] \leq [\SL(2, \Z) : F]$, so the group $H' \coloneqq F \cap H$ is of finite index in $H$ and hence finitely generated.
    But $H'$ is a subgroup of the free group $F$, so it must be free by Theorem~\ref{thm:freesub}.
    On the other hand, $H'$ is a subgroup of the virtually solvable group $H$, so $H'$ is virtually solvable.
    Since $H' \leq \SL(2, \Z)$ is virtually solvable and free, it must be abelian by Theorem~\ref{thm:Tits}. 
    Therefore either $H' \cong \Z$ or $H' = \{I\}$; in the second case, $H$ is finite.
\end{proof}

In case $H$ is finite, it is either trivial or it contains a non-trivial torsion element.
We further analyse the case where $H$ contains a finite index subgroup $H'$ isomorphic to $\Z$.
We need a deep result from Swan:

\begin{lemC}[{\cite[Theorem~B]{SWAN1969585}}]\label{lem:torfreefree}
    A group that is torsion-free and virtually free is free.
\end{lemC}

\begin{lem}\label{lem:tororZ}
    Let $H \leq \SL(2, \Z)$ be a group which contains a finite index subgroup $H'$ isomorphic to $\Z$.
    Then either $H$ contains a non-trivial torsion element, or it is also isomorphic to $\Z$.
\end{lem}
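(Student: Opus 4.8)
I would prove the contrapositive: if $G$ is torsion-free, then $G \cong \Z$. First note that $G$ is finitely generated, since it contains the finitely generated subgroup $H \cong \Z$ with finite index. To obtain a \emph{normal} copy of $\Z$ to work with, pass to the normal core $N \coloneqq \bigcap_{x \in G} x H x^{-1}$: at most $[G:H] < \infty$ distinct conjugates $xHx^{-1}$ occur, each of index $[G:H]$ in $G$, so $N$ is a finite-index normal subgroup of $G$; and since $N \leq H \cong \Z$ with $[H:N] \leq [G:N] < \infty$ and $H$ infinite, $N = \langle n \rangle$ is infinite cyclic. So from now on $N \trianglelefteq G$, $N \cong \Z$, $[G:N] < \infty$.

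The conjugation action of $G$ on $N$ gives a homomorphism $\rho \colon G \to \operatorname{Aut}(N) \cong \{\pm 1\}$, $x n x^{-1} = n^{\rho(x)}$. Put $C \coloneqq \ker \rho$, so $[G:C] \leq 2$ and $N$ is a finite-index \emph{central} subgroup of $C$. By Schur's theorem, a group with a finite-index centre has finite commutator subgroup, so $[C,C]$ is finite; as it lies in the torsion-free group $G$, it is trivial, hence $C$ is abelian. A finitely generated torsion-free abelian group with a finite-index copy of $\Z$ is itself $\cong \Z$, so $C = \langle c \rangle \cong \Z$. If $C = G$ we are done. Otherwise $[G:C] = 2$; take $t \in G \setminus C$. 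Then $t$ does not commute with $c$ (else it would commute with $n \in \langle c \rangle$, putting $t \in \ker \rho = C$), so $t c t^{-1} = c^{-1}$. Writing $t^2 = c^k \in C$, the element $t$ commutes with $t^2 = c^k$, while $t c^k t^{-1} = c^{-k}$; hence $c^{2k} = 1$, so $k = 0$ and $t^2 = 1$. Since $t \neq 1$, this is a torsion element, contradicting torsion-freeness. Hence $C = G \cong \Z$.

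I expect the crux to be extracting usable structure from the hypothesis "$G$ torsion-free and virtually $\Z$": after the normal core reduction the kernel $C$ of the conjugation action is central-by-finite, and it is exactly there that one needs an external structural input (Schur's theorem above) to force $C$ cyclic; the surrounding steps are routine bookkeeping. An alternative that fits this paper's setting bypasses the normal core altogether: $G$, being a torsion-free subgroup of the virtually free group $\SL(2,\Z)$, has a finite-index free subgroup (intersect $G$ with the free subgroup of Theorem~\ref{thm:vf} and use Nielsen–Schreier), hence $G$ is itself free by the Stallings–Swan theorem; a free group that is virtually $\Z$ has rank at most $1$ by the Nielsen–Schreier index formula, and being infinite it is $\cong \Z$. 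Either way, the real content is this single deep fact about central-by-finite, resp.\ virtually free, groups.
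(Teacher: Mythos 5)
Your main argument is correct, and it takes a genuinely different route from the paper. The paper deduces the lemma from the ambient group: since $G$ is a torsion-free subgroup of the virtually free group $\SL(2,\Z)$, it invokes Swan's theorem (a torsion-free virtually free group is free) to conclude $G$ is free, and then rules out non-abelian free groups because they are not virtually solvable (Theorem~\ref{thm:Tits}) and hence cannot be virtually $\Z$ --- essentially the ``alternative'' you sketch in your closing paragraph, except that the paper uses the Tits alternative where you use the Nielsen--Schreier index formula (both work). Your primary proof, by contrast, never uses $G \leq \SL(2,\Z)$ at all: the normal-core reduction, the action on $\operatorname{Aut}(N) \cong \{\pm 1\}$, Schur's theorem to make the kernel $C$ abelian (hence infinite cyclic), and the final computation $t c t^{-1} = c^{-1}$, $t^2 = c^k \Rightarrow c^{2k}=1 \Rightarrow t^2 = 1$ are all elementary and establish the stronger, purely group-theoretic fact that every torsion-free group which is virtually $\Z$ is itself infinite cyclic. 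Each step checks out (in particular $N \leq \ker\rho = C$ places $N$ in the centre of $C$, so Schur applies, and $C \trianglelefteq G$ as a kernel, so conjugation by $t$ is an automorphism of $C \cong \Z$). What the paper's route buys is brevity at the cost of the deep Stallings--Swan input and a dependence on the ambient linear group; what yours buys is generality and self-containedness, with Schur's theorem as the only external ingredient --- and even that could be bypassed here, since once $N$ is central of finite index in $C$ one could argue directly with the finitely many cosets. Either proof would serve the paper's purposes.
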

\begin{proof}
    Suppose $H$ is torsion-free.
    Since $H$ is virtually free and torsion-free, by Lemma~\ref{lem:torfreefree}, it is free.
    Hence either $H \cong \Z$, or $H$ is a non-abelian free group.
    But a non-abelian free group is not virtually solvable (Theorem~\ref{thm:Tits}), so it cannot contain a finite index subgroup $H'$ isomorphic to $\Z$.
    Therefore, we must have $H \cong \Z$.
\end{proof}

By Lemma~\ref{lem:solvZ} and \ref{lem:tororZ}, we can already prove the first part of Proposition~\ref{prop:simid}.
In particular, by Lemma~\ref{lem:solvZ}, $H$ is either finite or contains a finite index subgroup isomorphic to $\Z$.
If $H$ is finite, then either it is trivial or it contains a non-trivial torsion element.
If $H$ contains a finite index subgroup isomorphic to $\Z$, then by Lemma~\ref{lem:tororZ}, either it contains a non-trivial torsion element, or it is isomorphic to $\Z$.
Therefore, we have proved that exactly one of the following holds.
    \begin{enumerate}[nosep, label = (\arabic*)]
        \item $H$ is the trivial group.
        \item $H$ contains a non-trivial torsion element.
        \item $H$ is isomorphic to $\Z$.
    \end{enumerate}
In case (3), the generator of $H$ is either a twisted inversion, a shear, an inverting scale, or a positive scale.
This corresponds to the cases (iii)-(vi) of Proposition~\ref{prop:simid}.
We have thus proved the first part of Proposition~\ref{prop:simid}.
The following lemma shows that one can decide which of the six cases in Proposition~\ref{prop:simid} is true.
    
\begin{restatable}{lem}{lemdecisoZ}\label{lem:decisoZ}
    Let $A_1, \ldots, A_K$ be matrices in $\SL(2, \Z)$.
    The following can be done in PTIME:
    \begin{enumerate}[noitemsep, label = (\roman*)]
        \item decide whether the \emph{group} $H \coloneqq \langle A_1, \ldots, A_K \rangle_{grp}$ is trivial.
        \item decide whether $H$ is isomorphic to $\Z$.
        \item compute a generator $A$ of $H$ in case $H \cong \Z$.
    \end{enumerate}
\end{restatable}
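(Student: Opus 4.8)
The plan is to reduce parts~(ii) and~(iii) to integer linear algebra, after first reducing to the abelian case; part~(i) is immediate, since $G$ is trivial exactly when $A_1=\dots=A_K=I$, which is checked in linear time. So assume $G$ is nontrivial.

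For~(ii)--(iii) I would proceed as follows. If $G\cong\Z$ then $G$ is abelian, so all $A_i$ pairwise commute; this is tested in PTIME by $O(K^2)$ matrix multiplications, and if it fails the answer is ``$G\not\cong\Z$''. Next I would delete the generators equal to $I$, and if some remaining $A_i$ equals $-I$ or is torsion---equivalently (by the classification of $\SL(2,\Z)$-elements in Section~\ref{sec:prelim}) if $\operatorname{tr}(A_i)\in\{-1,0,1\}$---answer ``$G\not\cong\Z$'', since $\Z$ is torsion-free. After this cleanup the list is nonempty and every $A_i$ is a shear, a twisted inversion, or a scale, so $G$ is infinite. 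The structural point is that a finitely generated infinite abelian subgroup of $\SL(2,\Z)$ is isomorphic to $\Z$ or to $\Z\times\Z/2\Z$: it is of the form $\Z^r\times T$ with $T$ finite; $r\le 1$ because $\SL(2,\Z)$ is virtually free (Theorem~\ref{thm:vf}) and free groups contain no $\Z^2$; and $T$ is trivial or $\{I,-I\}$, because every finite subgroup of $\SL(2,\Z)$ is cyclic of order $1,2,3,4$, or $6$, those other than the trivial group and the cyclic group of order $3$ contain $-I$, and an order-$3$ element of $\SL(2,\Z)$ has finite centralizer and so cannot commute with an element of infinite order. Hence after the cleanup $G\cong\Z$ if and only if $-I\notin G$, and it remains to decide whether $-I\in G$ and, when it is not, to produce a generator.

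To handle $-I$ I would use the structure forced by commutativity. Fix the non-torsion $A_1\notin\{\pm I\}$; being non-scalar, its commutant in $M_2(\Q)$ is the $2$-dimensional commutative algebra $R:=\Q I+\Q A_1$, isomorphic to $\Q[X]/(\chi_{A_1})$ with $\chi_{A_1}$ the characteristic polynomial of $A_1$, so each $A_i$ lies in $R$, and its coordinates in the basis $\{I,A_1\}$ are found by solving a linear system. Thus $G$ embeds into the determinant-one unit group of $R$, with $-I\mapsto-1$. When $A_1$ is a scale, $R\cong\Q(\sqrt D)$ is a real quadratic field and (Dirichlet) its determinant-one units form $\{\pm1\}\times\langle\eta\rangle$ with $\langle\eta\rangle$ infinite cyclic; when $A_1$ is a shear or twisted inversion, $R$ is a ring of dual numbers $\Q[\varepsilon]/(\varepsilon^2)$ and its determinant-one units are $\{\pm1\}\times(\Q,+)$. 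Writing $A_i\leftrightarrow(\sigma_i,\delta_i)$ under such an identification---so that $\sigma_i=\sgn(\operatorname{tr}A_i)$ is the $\{\pm1\}$-component and $\delta=(\delta_1,\dots,\delta_K)$ is the primitive integer vector of $\Z$-components---one has $\prod_iA_i^{m_i}=\pm I$ iff $\sum_im_i\delta_i=0$, and a short computation over $\Z/2\Z$ shows that $-I\in G$ if and only if the mod-$2$ reduction of the vector $\big(\tfrac{1-\sigma_i}{2}\big)_{i}$ lies outside $\{\mathbf 0,\ \delta\bmod 2\}$. When $-I\notin G$, a generator of $G\cong\Z$ is $\prod_iA_i^{r_i}$ for any integers $r_i$ with $\sum_ir_i\delta_i=1$ (extended Euclidean algorithm), computed by repeated squaring.

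Everything above is elementary once $\delta$ is available, so the main obstacle is computing $\delta$ in PTIME. In the dual-numbers case it is exact arithmetic: after replacing each $A_i$ by $\pm A_i$ according to $\sigma_i$ the $A_i$ become shears, the matrices $A_i-I$ are rank-one nilpotents with a common kernel and image, hence pairwise rational multiples of a fixed one, and clearing denominators of these ratios yields $\delta$. The delicate case is the real quadratic one: there $\log|u_i|=\delta_i\log|\eta|$, where $u_i$ is the real embedding of $A_i$, so $\delta$ is the primitive integer vector proportional to $(\log|u_1|,\dots,\log|u_K|)$. I would \emph{not} compute $\eta$, whose regulator may be exponential in the input bit-length; instead I would note that $|\delta_i|$ is polynomially bounded---there is a scale $A_0\in\SL(2,\Z)$ with $A_i=\pm A_0^{\delta_i}$, and every scale in $\SL(2,\Z)$ has spectral radius at least $\tfrac{3+\sqrt5}{2}$, whereas $A_i$ has spectral radius at most $2^{O(\text{bit-length})}$---so $\delta$ can be recovered from polynomially many bits of approximation of the $\log|u_i|$ via continued fractions (or a single lattice-reduction call). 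Keeping this subcase polynomial despite the potentially huge fundamental unit---by working only with the primitive proportional vector $\delta$ rather than $\eta$ itself---is the one genuinely delicate step; the rest is linear algebra over $\Z$ and $\Z/2\Z$.
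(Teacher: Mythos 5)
Your argument is correct, but it takes a genuinely different route from the paper. The paper also begins with the commutativity check, but then treats (ii)--(iii) uniformly by considering the surjection $\varphi\colon \Z^K\to G$, $(n_1,\dots,n_K)\mapsto A_1^{n_1}\cdots A_K^{n_K}$, invoking the theorem of Babai et al.\ to compute a basis of the relation lattice $\Lambda=\ker\varphi$ in PTIME, deciding torsion-freeness by testing whether $\Lambda$ equals its saturation $\overline{\Lambda}$ via the Hermite Normal Form, and reading off a generator from a vector $\bx$ with $\bx\Z+\Lambda=\Z^K$. You instead re-derive the needed structure by hand: you embed $G$ into the determinant-one units of the commutant $\Q I+\Q A_1$ (real quadratic field or dual numbers), reduce everything to a sign vector and a primitive exponent vector $\delta$, decide $-I\in G$ by a mod-$2$ computation, and extract a generator by the extended Euclidean algorithm. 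Your identification of the delicate point is accurate, and your fix is sound: the regulator of the order may be exponentially large, but the exponents $\delta_i$ are polynomially bounded because every scale in $\SL(2,\Z)$ has spectral radius at least $\tfrac{3+\sqrt5}{2}$, so $\delta$ is recoverable from polynomially many bits of the logarithms of the eigenvalues. In effect you have hand-rolled, for $2\times 2$ matrices, the special case of the Babai et al.\ multiplicative-relations theorem that the paper uses as a black box. What the paper's route buys is brevity and uniformity (no case split on the type of $A_1$, no explicit handling of the unit group); what yours buys is self-containedness plus the sharper structural byproduct that an infinite abelian subgroup of $\SL(2,\Z)$ is $\Z$ or $\Z\times\Z/2\Z$ with $-I$ as the only obstruction. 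Two cosmetic cautions: state explicitly that $\delta$ is the primitive vector \emph{proportional to} the exponent vector (the exponents themselves need not be coprime), and spell out the one-line verification that the mod-$2$ reduction of the hyperplane lattice $\{\bm\mid \bm\cdot\delta=0\}$ is the full hyperplane $\{\bar{\bm}\mid\bar{\bm}\cdot\bar\delta=0\}$ over $\mathbb{F}_2$ (this uses primitivity of $\delta$), which is what justifies your membership criterion for $-I$.
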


\begin{proof}
(i).
    $H$ is trivial if and only if $A_i = I$ for all $i$.
    
(ii).
    First we check whether $H$ is abelian, this is done simply by checking whether $A_i A_j = A_j A_i$ for all $1 \leq i, j \leq K$.
    If $H$ is not abelian, then it is not isomorphic to $\Z$.

    Suppose $H$ is abelian.
    Then the group homomorphism
    \begin{align*}
    \varphi \colon \Z^K & \longrightarrow H \\
    (n_1, \ldots, n_K) & \longmapsto A_1^{n_1} \cdots A_K^{n_K}
    \end{align*}
    is surjective.
    The kernel
    \[
        \Lambda \coloneqq \ker(\varphi) = \{(n_1, \ldots, n_K) \mid A_1^{n_1} \cdots A_K^{n_K} = I\}
    \]
    is a finitely generated subgroup of $\Z^K$.
    A $\Z$-basis of $\Lambda$ is computable in PTIME by a classic result of Babai et al.\ \cite{babai1996multiplicative}.

    Let $\bl_1, \ldots, \bl_m$ be a $\Z$-basis of $\Lambda$.
    Define
    \begin{equation*}
        \overline{\Lambda} \coloneqq \left\{\bn \in \Z^K \;\middle|\; \bn = q_1 \bl_1 + \cdots + q_m \bl_m, \text{ where } q_1, \ldots, q_m \in \Q \right\},
    \end{equation*}
    which is the lattice of integer points in the $\Q$-linear space spanned by $\Lambda$.
    A basis of $\overline{\Lambda}$ can be computed in PTIME the following way.
    One can compute in PTIME a $\Q$-basis of the orthogonal space of $\sum_{i = 1}^m \Q \bl_i$, and thus write $\overline{\Lambda}$ as the integer solution set of a system of homogeneous linear equations of polynomial size.
    One can multiply the coefficients of these homogeneous linear equations by their common denominator, and suppose all the coefficients to be integers.
    From this system, a basis of $\overline{\Lambda}$ is computable in PTIME using row reduction.
    It is then decidable in PTIME whether $\overline{\Lambda} = \Lambda$, by checking whether each element in the basis of $\overline{\Lambda}$ belongs to $\Lambda$.

    We claim that $H$ is torsion-free if and only if $\overline{\Lambda} = \Lambda$.
    Indeed, let $T = A_1^{n_1} \cdots A_K^{n_K}$ be a non-trivial torsion element of $H$, then $T^m = I$ for some $m > 1$.
    Therefore $A_1^{m n_1} \cdots A_K^{m n_K} = I$, so $(m n_1, \ldots, m n_K) \in \Lambda$.
    This shows $(n_1, \ldots, n_K) \in \overline{\Lambda}$.
    But $T \neq I$, so $(n_1, \ldots, n_K) \notin \Lambda$.
    Hence, $T = A_1^{n_1} \cdots A_K^{n_K}$ is a non-trivial torsion element if and only if $(n_1, \ldots, n_K) \in \overline{\Lambda} \setminus \Lambda$.

    This proves the claim. Therefore it is decidable in PTIME whether $H$ contains a non-trivial torsion element.
    By Lemma~\ref{lem:tororZ}, it is decidable in PTIME whether $H \cong \Z$.
    
(iii).
    If $H$ is isomorphic to $\Z$, then the quotient group $\Z^K / \Lambda \cong H$ is isomorphic to $\Z$.
    Using the Hermite Normal Form, one can in PTIME compute an element $\bx = (x_1, \ldots, x_K) \in \Z^K$ such that $\bx, \bl_1, \ldots, \bl_m$ form a $\Z$-basis of $\Z^K$. (Recall that $\bl_1, \ldots, \bl_m$ are a $\Z$-basis of $\Lambda$.)
    Then $\bx + \Lambda$ generates $\Z^K / \Lambda$.
    Therefore $A \coloneqq \varphi(\bx) = A_1^{x_1} \cdots A_K^{x_K}$ generates $H$.
\end{proof}

We now proceed to prove the PTIME decidability claim of Proposition~\ref{prop:simid} for all six cases.

\subsection{$H$ is trivial}
In this case, $\mG = \{(I, \ba_1), \cdots, (I, \ba_K)\}$.

\begin{prop}\label{prop:triv}
    Let $\mG = \{(I, \ba_1), \cdots, (I, \ba_K)\}$.
    Then $\sgmG$ is a group if and only if the equation
    \begin{equation}\label{eq:lin}
        n_1 \ba_1 + n_2 \ba_2 + \cdots n_K \ba_K = \bzer
    \end{equation}
    has a solution $(n_1, \ldots, n_K) \in \Zpp^K$.
    In particular, this is decidable in PTIME.
\end{prop}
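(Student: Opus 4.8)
The plan is to characterize when the semigroup generated by a set of pure translations $\mG = \{(I, \ba_1), \ldots, (I, \ba_K)\}$ is a group. Since $\SA$ is torsion-free on the translation part — the element $(I, \ba)$ has infinite order whenever $\ba \neq \bzer$ — the semigroup $\sgmG$ is isomorphic to the sub-semigroup of $(\Z^2, +)$ generated by $\ba_1, \ldots, \ba_K$, so the question reduces entirely to abelian semigroup theory. Concretely, a nonempty word over $\mG$ evaluates to $(I, n_1\ba_1 + \cdots + n_K\ba_K)$ where $(n_1, \ldots, n_K) \in \Zp^K$ records the multiplicities of each letter, and $(n_1, \ldots, n_K) \neq \bzer$.

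For the forward direction, suppose $\sgmG$ is a group. By Lemma~\ref{lem:grpword}, the neutral element $(I, \bzer)$ is represented by a \emph{full-image} word over $\mG$; reading off the multiplicities of that word gives positive integers $n_1, \ldots, n_K \geq 1$ with $n_1\ba_1 + \cdots + n_K\ba_K = \bzer$, which is exactly a solution of \eqref{eq:lin} in $\Zpp^K$. For the converse, suppose \eqref{eq:lin} has a solution $(n_1, \ldots, n_K) \in \Zpp^K$. Then the full-image word $(I,\ba_1)^{n_1}(I,\ba_2)^{n_2}\cdots(I,\ba_K)^{n_K}$ evaluates to $(I, \bzer)$, so again by Lemma~\ref{lem:grpword} (implication $(i)\Rightarrow(ii)$) the semigroup $\sgmG$ is a group. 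This half is immediate once the reduction to $(\Z^2,+)$ is in place.

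For the PTIME decidability claim, I would observe that deciding whether $n_1\ba_1 + \cdots + n_K\ba_K = \bzer$ has a solution with every $n_i \geq 1$ is a feasibility question for a system of two linear equations (the two coordinates of $\Z^2$) in $K$ variables with the strict positivity constraints $n_i \geq 1$. Substituting $n_i = m_i + 1$ with $m_i \geq 0$ turns this into an integer feasibility problem $\sum_i m_i \ba_i = -\sum_i \ba_i$, $m_i \geq 0$; since the number of equality constraints is fixed (namely $2$), integer programming in fixed dimension of the constraint space — or more directly, the fact that a bounded-dimension rational cone membership / Integer Feasibility with a fixed number of constraints is solvable in polynomial time (e.g.\ via Lenstra's algorithm, or an explicit argument using Carathéodory bounds over $\Q$ followed by a search over a polynomially-bounded region) — gives a PTIME procedure. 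The main obstacle, and the only point requiring care, is justifying the PTIME bound rigorously: one must either cite an appropriate integer-programming-in-fixed-dimension result or give a self-contained argument that a rational solution (found by linear programming in PTIME) can be scaled and rounded to an integer solution with positive entries, controlling bit-sizes throughout. Everything else is routine.
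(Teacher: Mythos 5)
Your reduction of the Group Problem to the solvability of \eqref{eq:lin} over $\Zpp^K$ is correct and is exactly the paper's argument: since all the matrix parts are $I$, the elements of $\mG$ commute, a word over $\mG$ evaluates to $(I, n_1\ba_1+\cdots+n_K\ba_K)$ where the $n_i$ are the letter multiplicities, and Lemma~\ref{lem:grpword}\,(i)$\Leftrightarrow$(ii) finishes both directions. That half needs no changes.

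The PTIME claim is where your proposal has a genuine gap, and the routes you primarily lean on do not work. Lenstra's algorithm runs in polynomial time only when the number of \emph{variables} is fixed, but here $K$ is part of the input. ``Integer feasibility with a fixed number of equality constraints'' over $\Zp^K$ with binary-encoded coefficients is not polynomial either: already a single equation $\sum_i m_i a_i = b$ with $m_i \in \Zp$ is the unbounded subset-sum problem, which is \textbf{NP}-complete; known algorithms for few constraints are only pseudo-polynomial in the magnitudes. Moreover, your substitution $n_i = m_i + 1$ actively destroys the one feature that makes this instance easy, namely \emph{homogeneity}. The paper's argument is a single line: since \eqref{eq:lin} is homogeneous, it has a solution in $\Zpp^K$ if and only if it has one in $\Q_{>0}^K$ (multiply a rational solution by the common denominator of its entries --- no rounding, no bit-size control needed), and feasibility of $\{\bn \in \Q^K : \sum_i n_i \ba_i = \bzer,\ n_i \geq 1\}$ is linear programming, hence PTIME. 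You do gesture at ``scaling a rational LP solution,'' which is the right idea, but you frame it as scale-and-round for the inhomogeneous system and flag it as an unresolved obstacle; applied directly to the homogeneous system it is immediate, and that observation is the missing step.
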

\begin{proof}
    Note that the matrices in $\mG$ commute.
    Therefore by Lemma~\ref{lem:grpword}, $\sgmG$ is a group if and only if Equation~\eqref{eq:lin} has a solution $(n_1, \ldots, n_K) \in \Zpp^K$.
    By the homogeneity of Equation~\eqref{eq:lin}, it has a solution in $\Zpp^K$ if and only if it has a solution in $\Q_{>0}^K$.
    This is decidable in PTIME by linear programming~\cite{khachiyan1979polynomial}.
\end{proof}

\subsection{$H$ contains a torsion element}

We show that $\sgmG$ is always a group in case $H$ contains a non-trivial torsion element.

\begin{prop}\label{prop:torid}
    Let $\mG \coloneqq \{(A_1, \ba_1), \ldots, (A_K, \ba_K)\}$ be a set of elements of $\SA$.
    If the semigroup $\langle A_1, \ldots, A_K \rangle$ is a group containing a non-trivial torsion element, then $\sgmG$ is a group.
\end{prop}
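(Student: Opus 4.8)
The plan is to produce the neutral element $(I, \bzer)$ as the value of a \emph{full-image} word over $\mG$; by the implication $(i) \Rightarrow (ii)$ of Lemma~\ref{lem:grpword}, this suffices to conclude that $\sgmG$ is a group. First I would exploit the hypothesis that $G = \langle A_1, \ldots, A_K \rangle$ is a group: applying Lemma~\ref{lem:grpword} to $G$, every element of $G$ is represented by a full-image word over the alphabet $\{A_1, \ldots, A_K\}$. In particular the torsion element $T \in G$, say of order $m > 1$, is represented by such a word $w$; lifting $w$ to $\SA$ by replacing each occurrence of $A_i$ with $(A_i, \ba_i)$ yields an element $(T, \bt) \in \sgmG$ (for some $\bt \in \Z^2$) represented by a full-image word over $\mG$.

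The crux is the identity $I + T + T^2 + \cdots + T^{m-1} = 0$, valid for any nontrivial torsion element $T$ of $\SL(2, \Z)$. To see it, note that $1$ cannot be an eigenvalue of $T$: if it were, then since $\det T = 1$ both eigenvalues would equal $1$, so $T$ would be unipotent, and $T^m = I$ would force $T = I$, contradicting $T \neq I$. (Alternatively one reads this off the classification of $\SL(2, \Z)$-elements: a torsion element has trace $0$, $\pm 1$, or equals $-I$, with eigenvalues among $\pm i$, the primitive cube and sixth roots of unity, and $-1$.) Hence $T - I$ is invertible over $\C$, and since $(T - I)\bigl(I + T + \cdots + T^{m-1}\bigr) = T^m - I = 0$, the factor $I + T + \cdots + T^{m-1}$ must vanish.

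Plugging this into the multiplication rule of $\SA$, I obtain
\[
(T, \bt)^m = \bigl(T^m,\ (I + T + \cdots + T^{m-1})\,\bt\bigr) = (I, \bzer).
\]
Since $(T, \bt)$ is represented by a full-image word over $\mG$ and a power of a full-image word is again full-image, this exhibits $(I, \bzer)$ as the value of a full-image word over $\mG$. By Lemma~\ref{lem:grpword}, $\sgmG$ is a group, as required; combined with Lemma~\ref{lem:decisoZ} (which lets us detect case (ii) of Proposition~\ref{prop:simid} in PTIME), this also settles the corresponding PTIME decidability claim, the answer being unconditionally affirmative.

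I do not anticipate a genuine obstacle in this argument; the only step needing attention is the vanishing of $I + T + \cdots + T^{m-1}$, and even there the eigenvalue computation is elementary. Note in particular that the value $\bt$ of the translation part is irrelevant --- the power $(T, \bt)^m$ collapses to the identity regardless of $\bt$ --- which is exactly what makes the torsion case so much easier than the scale and shear cases treated elsewhere.
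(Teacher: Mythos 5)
Your proposal is correct and follows essentially the same route as the paper: both obtain a full-image representative $(T,\bt)$ of the torsion element via Lemma~\ref{lem:grpword}, observe that $(T,\bt)^m=(I,(I+T+\cdots+T^{m-1})\bt)=(I,\bzer)$ because $I-T$ is invertible (the eigenvalues of a torsion element of $\SL(2,\Z)$ are never $1$), and conclude via Lemma~\ref{lem:grpword}. The only cosmetic difference is that you note the matrix sum $I+T+\cdots+T^{m-1}$ itself vanishes, while the paper writes the same cancellation as $(I-T)^{-1}(I-T^m)\bt=\bzer$.
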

\begin{proof}
    Suppose $\langle A_1, \ldots, A_K \rangle$ contains a non-trivial torsion element $T$.
    Let $m > 1$ be such that $T^m = I$.
    Let $\bt \in \Z^2$ be a vector such that $(T, \bt)$ is an element in $\sgmG$ represented by a full-image word (such a word exists by Lemma~\ref{lem:grpword}).
    Then 
    \[
        (T, \bt)^m = (T^m, (I + T + \cdots + T^{m-1})\bt)
        = (I, (I - T)^{-1}(I - T^m)\bt) = (I, \bzer).
    \]
    Here, $I - T$ is invertible because $T \in \SL(2, \Z)$ is non-trivial torsion, so the eigenvalues of $T$ are all different from one ($T$ must be in case (i) or (ii) in the classification of elements in $\SL(2, \Z)$, Section~\ref{sec:prelim}).
    We conclude that $(I, \bzer)$ can be represented by a full-image word over the alphabet $\mG$.
    Hence, $\sgmG$ is a group by Lemma~\ref{lem:grpword}.
\end{proof}

In the next four cases, $H$ is isomorphic to $\Z$.
Let $A$ be a generator of $H$, computable in PTIME by Lemma~\ref{lem:decisoZ}.
The \emph{Jordan Normal Form} of $A$ can be computed in PTIME~\cite{giesbrecht1995nearly}.
One can directly obtain from its Jordan Normal Form whether $A$ is a twisted inversion, a shear or a scale.

\subsection{$H$ is generated by a twisted inversion $A$}
We show that if $A$ is a twisted inversion, then $\sgmG$ is always a group.
\begin{prop}\label{prop:rev}
    If the generator $A$ of the group $\langle A_1, \ldots, A_K \rangle \cong \Z$ is a twisted inversion, then $\sgmG$ is a group.
\end{prop}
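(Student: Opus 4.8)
The plan is to imitate the argument for Proposition~\ref{prop:torid}, exploiting the fact that a twisted inversion, although not torsion, has the property that its square $A^2$ is a shear — and, more usefully, that $A$ satisfies $(A+I)^2 = 0$, i.e. $A^2 = -2A - I$, equivalently $I - A$ is \emph{not} invertible but $I + A$ is nilpotent of order two. So $A$ is "almost torsion": although $A^m \neq I$ for all $m$, the matrix $A$ has a power congruent to a controlled shear, and one can hope that a suitable product of full-image representatives of $A$ collapses the $\Z^2$-component.

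First I would invoke Lemma~\ref{lem:grpword}: since $G = \langle A_1, \dots, A_K\rangle$ is a group generated by a twisted inversion $A$, the element $A$ (and also $A^2$, $A^{-1}$, etc.) is represented by a full-image word over $\{A_1, \dots, A_K\}$; hence there is an element $(A, \ba) \in \sgmG$ represented by a full-image word over $\mG$, for some $\ba \in \Z^2$. It then suffices, by Lemma~\ref{lem:grpword} again, to show that $(I, \bzer)$ is a product of copies of $(A, \ba)$ (together, if needed, with other full-image representatives): any such product is itself represented by a full-image word, so $\sgmG$ is a group. Now compute powers of $(A, \ba)$: in general $(A, \ba)^k = (A^k, (I + A + \dots + A^{k-1})\ba)$. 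The obstruction to the torsion trick is that $A^k \neq I$. The natural fix is to pass to $(A^2, \ba') \in \sgmG$ where $A^2$ is a shear; but a shear is not torsion either, so one more idea is needed.

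The key observation I would use is that $A$ has finite order \emph{in the quotient} $\SL(2,\Z)/\{\pm I\} = \mathrm{PSL}(2,\Z)$ is false as well — rather, the right move is: since $A^2$ is a shear conjugate to $\begin{pmatrix}1 & 1\\ 0 & 1\end{pmatrix}$, we have $(A^2)^n = I + n(A^2 - I)$ with $(A^2-I)^2 = 0$. Writing $(A, \ba)^2 = (A^2, \bc)$ with $\bc = (I+A)\ba$, we get $(A,\ba)^{2n} = (A^{2n}, (I + A^2 + \dots + A^{2(n-1)})\bc)$, and using $(A^2-I)^2=0$ one computes $I + A^2 + \dots + A^{2(n-1)} = nI + \binom{n}{2}(A^2 - I)$. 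Hence $(A,\ba)^{2n} = \bigl(A^{2n}, n\bc + \binom n2 (A^2-I)\bc\bigr)$. Now $A^2 - I = 2(A + I) \cdot (\text{something})$; more directly, since $(A+I)^2 = 0$ we have $(A^2 - I)\bc = (A+I)(A-I)\bc$, and $(A+I)\bc = (A+I)^2 \ba = \bzer$. Therefore $(A^2 - I)\bc = (A-I)(A+I)\bc = \bzer$ as well — wait, $(A^2-I)\bc = (A-I)(A+I)\bc$ needs $A-I$ and $A+I$ to commute, which they do. So $(A,\ba)^{2n} = (A^{2n}, n\bc)$. The hard part — and the crux of the whole proof — is now to kill $n\bc$: since $A^2$ is a shear fixing a line $V$, one shows $\bc \in V$ (because $\bc = (I+A)\ba$ and $\mathrm{im}(I+A) = \ker(I+A) = \ker(A^2 - I) = V$), and then, because $A$ acts on $V$ by $-1$ (as $A^2$ is identity on $V$ and $A \neq I$), conjugating by $(A, \ba)$ sends $(I, \bc)$ to $(I, A\bc) = (I, -\bc)$. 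Thus $(A,\ba) \cdot (I, n\bc) \cdot (A,\ba)^{-1} = (I, -n\bc)$, equivalently $(A,\ba)^{2n}(A,\ba)(I,n\bc)$-type manipulations yield $(I, n\bc)(I, -n\bc) = (I, \bzer)$ realized as a product of full-image representatives. More cleanly: $(A,\ba)^{2n} = (A^{2n}, n\bc)$ and $(A,\ba)^{-2n}$-conjugate $= (A^{2n}, -n\bc)$, and since $A^{2n}$ is again a shear one picks $n$ so that a bounded product telescopes; I expect the slickest route is to directly verify $(A, \ba)^{2}\cdot (A, \ba) \cdot (A, \ba)^{2}$-style identities using $(A+I)\bc = \bzer$ and $A\bc = -\bc$ to reach $(I, \bzer)$.

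The main obstacle, then, is the final "collapse" step: producing an honest \emph{positive} word in the full-image representatives (not just in $(A,\ba)$, whose inverse is not available as a positive word a priori — though $(A,\ba)^{-1} = (A^{-1}, -A^{-1}\ba) \in \sgmG$ since $A^{-1}$ is represented by a full-image word) that evaluates to $(I, \bzer)$, given the relations $A\bc = -\bc$ and $\bc \in V$. Concretely I would argue: $(A,\ba)^{2n}\,(A,\ba)^{-1}\,(A,\ba)^{2n}\,(A,\ba)^{-1}$-type products, or more simply, note $(I, \bc)$ and $(I,-\bc)$ both lie in $\sgmG$ (the first as a power of $(A,\ba)$ modulo the $A^{2n}=$ shear factor, the second by conjugation), so $(I, \bc)(I, -\bc) = (I, \bzer)$, and since $(A,\ba)$ is a full-image representative, so is this product; hence $(I,\bzer)$ is a full-image word and $\sgmG$ is a group by Lemma~\ref{lem:grpword}. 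I would write this out carefully, tracking the shear factor $A^{2n}$ (which is why one actually wants to combine $(A,\ba)^{2n}$ with a compensating power to return the matrix part to $I$, using that the shear $A^2$ has infinite order but $A^{2n}$ can be inverted inside the group $G$).
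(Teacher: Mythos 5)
Your proposal correctly isolates the key algebraic fact — a twisted inversion satisfies $(A+I)^2 = 0$ — and the correct framework (Lemma~\ref{lem:grpword}: exhibit $(I,\bzer)$ as a full-image word), which is exactly what the paper uses. But there is a genuine gap in the execution, located precisely at the step you yourself flag as "the main obstacle." You write that $(A,\ba)^{-1} = (A^{-1}, -A^{-1}\ba) \in \sgmG$ "since $A^{-1}$ is represented by a full-image word." This does not follow: a full-image word over $\{A_1,\ldots,A_K\}$ representing $A^{-1}$ lifts to an element $(A^{-1}, \bb) \in \sgmG$ for \emph{some} $\bb \in \Z^2$ determined by that word, and you have no control over $\bb$; in particular you cannot assume $\bb = -A^{-1}\ba$. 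Since you may not presuppose that $\sgmG$ is a group, the exact inverse is simply not available, and every subsequent step that uses "conjugation by $(A,\ba)$" or "$(I,-\bc)$ by conjugation" is circular. Your computation $(A,\ba)^{2n} = (A^{2n}, n\bc)$ with $\bc = (A+I)\ba$, $A\bc = -\bc$ is correct but never closes: the matrix part $A^{2n}$ is a nontrivial shear for all $n \geq 1$, so no positive power of $(A,\ba)$ alone reaches $(I,\cdot)$, and the promised telescoping identity is never written down.

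The paper closes exactly this gap by working with two \emph{independent} elements $(A,\ba)$ and $(A^{-1},\bb)$, both obtained from full-image words with $\ba,\bb$ arbitrary, and verifying the single explicit identity
\[
(A, \ba)^2 \cdot (A^{-1}, \bb)^3 \cdot (A, \ba)^2 \cdot (A^{-1}, \bb) = \bigl(I,\; A^{-1}(A+I)^2 \ba + (A+I)^2 \bb\bigr) = (I, \bzer),
\]
where the word is chosen so that the coefficients of $\ba$ and of $\bb$ are each left-divisible by $(A+I)^2 = 0$. This is the concrete "collapse" your sketch is missing; to repair your proof you would need to produce such a word valid for arbitrary $\bb$, not just for $\bb = -A^{-1}\ba$.
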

\begin{proof}
    Since $\langle A_1, \ldots, A_K \rangle$ is isomorphic to $\Z$, we have $A, A^{-1} \in \langle A_1, \ldots, A_K \rangle$.
    As \linebreak[5] $\langle A_1, \ldots, A_K \rangle$ is a group, let $(A, \ba)$ and $(A^{-1}, \bb)$ be elements of $\sgmG$ represented by full-image words over $\mG$.
    
    We claim that
    \[
    (A, \ba)^2 \cdot (A^{-1}, \bb)^3 \cdot (A, \ba)^2 \cdot (A^{-1}, \bb) = (I, \bzer).
    \]
    By direct computation,
    $
    (A, \ba)^2 \cdot (A^{-1}, \bb)^3 \cdot (A, \ba)^2 \cdot (A^{-1}, \bb) = (I, A^{-1}(A+I)^2 \ba + (A+I)^2 \bb).
    $
    But since $A$ is a twisted inversion, we have $(A+I)^2 = 0$, so the claim is proved.
    We conclude that $(I, \bzer)$ can be represented as a full-image word over $\mG$.
    Hence, $\sgmG$ is a group by Lemma~\ref{lem:grpword}.
\end{proof}

\subsection{$H$ is generated by a shear $A$}
If $A$ is a shear, the main idea in this case is that the semigroup generated by $\mG$ can be embedded as a subsemigroup of the \emph{Heisenberg group}:
\[
\HH_3(\Q) \coloneqq \left\{
\begin{pmatrix}
    1 & a & b \\
    0 & 1 & c \\
    0 & 0 & 1 \\
\end{pmatrix}
\;\middle|\;
a, b, c \in \Q
\right\}.
\]
Since $A$ is a shear, it is triangularizable over $\Q$.
Since $A_1, \ldots, A_K$ are all powers of $A$, they all have eigenvalue one and share the same 1-dimensional invariant subspace, and are therefore simultaneously triangularizable.
Let $P$ be a matrix (with entries in $\Q$) such that $P^{-1} A_i P$ are all of the form 
$
\begin{pmatrix}
        1 & \lambda_i \\
        0 & 1 \\
\end{pmatrix}
$
with $\lambda_i \in \Q$.
Then, we have the following conjugation:
\[
\begin{pmatrix}
        P^{-1} & 0 \\
        0 & 1 \\
\end{pmatrix}
\begin{pmatrix}
        A_i & \ba_i \\
        0 & 1 \\
\end{pmatrix}
\begin{pmatrix}
        P & 0 \\
        0 & 1 \\
\end{pmatrix}
=
\begin{pmatrix}
        P^{-1} A_i P & P^{-1} \ba_i \\
        0 & 1 \\
\end{pmatrix}
\in \HH_3(\Q).
\]

This shows that the map
\begin{equation}\label{eq:defphi}
\varphi : \mG \rightarrow \HH_3(\Q), \quad (A_i, \ba_i) \mapsto 
\begin{pmatrix}
        P^{-1} A_i P & P^{-1} \ba_i \\
        0 & 1 \\
\end{pmatrix},
\end{equation}
extends to an injective semigroup homomorphism from $\sgmG$ to $\HH_3(\Q)$.
Therefore, $\sgmG$ is a group if and only if $\langle \varphi(\mG) \rangle$ is a group.
This is decidable by the following theorem.

\begin{thmC}[\cite{dong2022identity}]\label{thm:H3}
    The Group Problem in $\HH_3(\Q)$ is decidable in PTIME.
\end{thmC}
The elements $\varphi(A_1), \ldots, \varphi(A_K)$ can be computed in PTIME. By Theorem~\ref{thm:H3}, we immediately obtain:
\begin{cor}\label{cor:shear}
    If the generator $A$ of the group $\langle A_1, \ldots, A_K \rangle \cong \Z$ is a shear, then the Group Problem for $\sgmG$ is equivalent to the Group Problem for $\langle \varphi(\mG) \rangle$, which is decidable in PTIME.
\end{cor}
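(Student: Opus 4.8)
The plan is to package the discussion immediately preceding the statement into a clean argument; the only real content is the reduction to the Heisenberg group $\HH_3(\Q)$ together with an appeal to Theorem~\ref{thm:H3}. First I would record the structural fact that makes the reduction work. Since the semigroup $G \coloneqq \langle A_1, \ldots, A_K \rangle$ is a group isomorphic to $\Z$ and generated by the shear $A$, every $A_i$ lies in $G = \langle A \rangle_{grp}$, so $A_i = A^{k_i}$ for some $k_i \in \Z$. As a power of a shear, each $A_i$ is either $I$ or again a shear, and in every case it fixes the unique line $V = \Lat(A)$, because shears have eigenvalue $1$ (see the classification in Section~\ref{sec:prelim}). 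Let $\bv \in \Z^2$ be the primitive integer generator of $V$: it is the eigenvector of $A$ for the eigenvalue $1$ and can be computed in PTIME with bit-length polynomial in that of $A$. Extending $\bv$ to an integer basis of $\Z^2$ yields a matrix $P \in \GL(2, \Z)$ with polynomially bounded entries; since $A_i \bv = \bv$, the first column of $P^{-1} A_i P$ is $\be_1$, and as this matrix has integer entries and determinant $1$, it must equal $\begin{pmatrix} 1 & \lambda_i \\ 0 & 1 \end{pmatrix}$ for some $\lambda_i \in \Z$. Hence $\varphi(A_i, \ba_i)$ indeed lies in $\HH_3(\Q)$.

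Next I would verify that $\varphi$ extends to an injective semigroup homomorphism from $\sgmG$ to $\HH_3(\Q)$, as asserted above. Conjugation by $Q \coloneqq \begin{pmatrix} P & 0 \\ 0 & 1 \end{pmatrix}$ is a group automorphism of $\GL(3, \Q)$, and by the displayed computation it sends each generator $(A_i, \ba_i)$ of $\sgmG$ to $\varphi(A_i, \ba_i)$. Since conjugation is multiplicative, the unique homomorphic extension of $\varphi$ to $\sgmG$ coincides with the restriction of $g \mapsto Q^{-1} g Q$ to $\sgmG$, hence is injective, with image exactly $\langle \varphi(\mG) \rangle \leq \HH_3(\Q)$. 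An isomorphism of semigroups carries groups to groups and non-groups to non-groups, so $\sgmG$ is a group if and only if $\langle \varphi(\mG) \rangle$ is a group; this is the claimed equivalence of the two instances of the Group Problem. Finally, Theorem~\ref{thm:H3} decides the Group Problem in $\HH_3(\Q)$ in PTIME, and since $P$ and the $A_i, \ba_i$ have polynomially bounded encodings, the set $\varphi(\mG)$ is computable in PTIME (one matrix inversion and a few rational products); so the whole reduction, and hence the Group Problem for $\sgmG$, is decidable in PTIME.

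The corollary itself is essentially a bookkeeping step: the substantive work lies in the preceding embedding argument and, above all, in Theorem~\ref{thm:H3}, imported from \cite{dong2022identity}. The only point needing genuine care -- and the reason one obtains a PTIME bound rather than mere decidability -- is ensuring that the conjugating matrix $P$, and hence the rational matrices $\varphi(A_i, \ba_i)$, have size polynomial in the input; this is exactly what choosing $P$ over $\Z$ with first column the primitive eigenvector of the shear $A$ guarantees.
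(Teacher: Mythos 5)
Your proposal is correct and follows essentially the same route as the paper: reduce to the Heisenberg group via the conjugation map $\varphi$ and invoke Theorem~\ref{thm:H3}. The only cosmetic difference is that you obtain the simultaneous triangularization from $A_i = A^{k_i}$ fixing the common eigenline (and take $P$ integral to make the size bound explicit), whereas the paper derives it from commutativity and the common eigenvalue $1$; both yield the same embedding into $\HH_3(\Q)$.
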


\subsection{$H$ is generated by an inverting scale $A$}
We show that $\sgmG$ is a group in this case.

\begin{restatable}{prop}{proplezero}\label{prop:le0}
    If the generator $A$ of the group $\langle A_1, \ldots, A_K \rangle \cong \Z$ is an inverting scale, then $\sgmG$ is a group.
\end{restatable}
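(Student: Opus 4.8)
Here is the plan. The goal is to exhibit an explicit non-empty \emph{full-image} word over $\mG$ that evaluates to $(I,\bzer)$; by Lemma~\ref{lem:grpword} (the implication $(i)\Rightarrow(ii)$) this shows $\sgmG$ is a group. Since $G=\langle A_1,\ldots,A_K\rangle\cong\Z$ is generated by the inverting scale $A$, both $A$ and $A^{-1}$ lie in $G$, and because $G$ is a group, Lemma~\ref{lem:grpword} provides full-image words over $\{A_1,\ldots,A_K\}$ representing $A$ and $A^{-1}$. Lifting these words to $\SA$ (replacing each $A_i$ by $(A_i,\ba_i)$) produces elements $g\coloneqq(A,\ba)$ and $h\coloneqq(A^{-1},\bb)$ of $\sgmG$, for suitable $\ba,\bb\in\Z^2$, each represented by a full-image word over $\mG$. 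The precise values of $\ba,\bb$ will be irrelevant.

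The heart of the argument is a short computation in $\SA$. Writing $\bc\coloneqq A\bb+\ba$, one checks directly that
\[
gh=(I,\bc),\qquad hg=(I,A^{-1}\bc),\qquad g^2h^2=\bigl(I,(I+A)\bc\bigr),
\]
and these are translations, hence they commute with each other. Consequently, for any positive integer $c_0$ the word $(gh)^{c_0}\,(g^2h^2)\,(hg)$ evaluates to $\bigl(I,(c_0 I+I+A+A^{-1})\bc\bigr)$. Multiplying the operator $c_0 I+I+A+A^{-1}$ by the invertible matrix $A$ and invoking the Cayley–Hamilton identity $A^2=(\operatorname{tr}A)\,A-I$ (recall $\det A=1$, so the characteristic polynomial is $X^2-(\operatorname{tr}A)X+1$), one gets $A(c_0 I+I+A+A^{-1})=(c_0+1+\operatorname{tr}A)\,A$. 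Choosing $c_0\coloneqq-(\operatorname{tr}A+1)$ therefore kills the operator, so
\[
(gh)^{c_0}\,(g^2h^2)\,(hg)=(I,\bzer).
\]
Because $A$ is an \emph{inverting} scale, the classification of $\SL(2,\Z)$-elements gives $\operatorname{tr}A=a+d\le-3$, hence $c_0=-(\operatorname{tr}A+1)\ge 2$ is a genuine positive integer, and the displayed word is non-empty and well defined.

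To finish: substituting the full-image words over $\mG$ that represent $g$ and $h$ into $(gh)^{c_0}(g^2h^2)(hg)$ yields a word over $\mG$ that is full-image (it contains, for instance, the full-image subword representing $g$) and evaluates to $(I,\bzer)$; Lemma~\ref{lem:grpword} then gives that $\sgmG$ is a group. I expect the only non-routine step to be spotting the right combination of translations: the coefficients are forced by demanding that the degree-$\le 1$ operator $c_0 I+I+A+A^{-1}$ be annihilated, which via Cayley–Hamilton pins down $c_0=-(\operatorname{tr}A+1)$. This is exactly where the hypothesis ``inverting'' (equivalently $\operatorname{tr}A\le-3$, so that $c_0\ge 2>0$) is used; for a positive scale the same recipe would demand $c_0<0$, which is why that remaining case cannot be settled by this algebraic shortcut and instead needs the geometric machinery of Section~\ref{sec:nonsim}.
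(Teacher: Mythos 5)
Your proof is correct, and it takes a genuinely different and more economical route than the paper's. You produce a single explicit word identity: with $g=(A,\ba)$, $h=(A^{-1},\bb)$ and $\bc=A\bb+\ba$, the translations $gh=(I,\bc)$, $hg=(I,A^{-1}\bc)$, $g^2h^2=(I,(I+A)\bc)$ commute, and Cayley--Hamilton gives $A+A^{-1}=(\operatorname{tr}A)I$, so $(gh)^{c_0}(g^2h^2)(hg)=(I,(c_0+1+\operatorname{tr}A)\bc)=(I,\bzer)$ for $c_0=-(\operatorname{tr}A+1)$, which is $\geq 2$ precisely because an inverting scale has $\operatorname{tr}A\leq -3$; all the computations check out, and the full-image bookkeeping via Lemma~\ref{lem:grpword} is handled exactly as the paper does elsewhere. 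The paper instead works in the eigenbasis $(\bx_V,\bx_W)$, forms $\bv_m=g^mh^m$ and $\bw_m=h^mg^m$, splits into four cases according to which coordinate coefficients vanish, and in the generic case uses that $\lambda<-1$ forces the directions of $\bv_{2m},\bv_{2m+1},\bw_{2m},\bw_{2m+1}$ to approach $-\bx_V,\bx_V,\bx_W,-\bx_W$, so that these four vectors generate $\Q^2$ as a $\Qp$-cone and a positive integer combination yields $\bzer$. Your argument eliminates the case analysis and the limiting/cone machinery entirely, and is in the same spirit as the paper's own treatment of the twisted-inversion case (Proposition~\ref{prop:rev}); indeed your identity with $c_0=1$ also covers twisted inversions ($\operatorname{tr}A=-2$), unifying cases (iii) and (v) of Proposition~\ref{prop:simid}. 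The trade-off is that your computation is tailored to the sign of the trace (it correctly breaks down for shears and positive scales, where $c_0<0$), whereas the paper's cone-based viewpoint is the one that generalizes to the harder positive-scale analysis of Lemmas~\ref{lem:fullcell}--\ref{lem:radcomb}.
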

\begin{proof}
    Since $\langle A_1, \ldots, A_K \rangle$ is a group, we have $A, A^{-1} \in \langle A_1, \ldots, A_K \rangle$.
    Recall that $\lambda < -1$ is the smaller eigenvalue of $A$.
    Let $V$ and $W$ be the invariant spaces of $A$ corresponding to the eigenvalues $\lambda$ and $\lambda^{-1}$, and let $\bx_V, \bx_W$ be non-zero vectors respectively in $V$ and $W$.
    In particular we have $A \bx_V = \lambda \bx_V, A \bx_W = \lambda^{-1} \bx_W$.
    As $\langle A_1, \ldots, A_K \rangle$ is a group, let $(A, a_+ \bx_V + b_+ \bx_W)$ and $(A^{-1}, a_- \bx_V + b_- \bx_W)$ be elements of $\sgmG$ represented by full-image words.

    Then for any $m > 0$, the elements
        \begin{align*}
            (I, \bv_m) & \coloneqq (A, a_+ \bx_V + b_+ \bx_W)^m \cdot (A^{-1}, a_- \bx_V + b_- \bx_W)^m \\
            & = \left(A^m, \sum_{i = 0}^{m-1} \lambda^i a_+ \bx_V + \sum_{i = 0}^{m-1} \lambda^{-i} b_+ \bx_W \right) \cdot \left(A^{-m}, \sum_{i = 0}^{m-1} \lambda^{-i} a_- \bx_V + \sum_{i = 0}^{m-1} \lambda^i b_- \bx_W \right) \\
            & = \left(I, \left(\frac{a_+}{1 - \lambda} - \frac{a_-}{1 - \lambda^{-1}}\right)(1 - \lambda^{m})\bx_V + \left(\frac{b_+}{1 - \lambda^{-1}} - \frac{b_-}{1 - \lambda}\right)(1 - \lambda^{-m})\bx_W\right)
        \end{align*}
    and 
        \begin{align*}
            (I, \bw_m) & \coloneqq (A^{-1}, a_- \bx_V + b_- \bx_W)^m \cdot (A, a_+ \bx_V + b_+ \bx_W)^m \\
            & = \left(A^{-m}, \sum_{i = 0}^{m-1} \lambda^{-i} a_- \bx_V + \sum_{i = 0}^{m-1} \lambda^i b_- \bx_W \right) \cdot \left(A^m, \sum_{i = 0}^{m-1} \lambda^i a_+ \bx_V + \sum_{i = 0}^{m-1} \lambda^{-i} b_+ \bx_W \right) \\
            & = \left(I, \left(\frac{a_+}{1 - \lambda} - \frac{a_-}{1 - \lambda^{-1}}\right)(\lambda^{-m} - 1)\bx_V + \left(\frac{b_+}{1 - \lambda^{-1}} - \frac{b_-}{1 - \lambda}\right)(\lambda^{m} - 1)\bx_W\right)
        \end{align*}
    are in $\sgmG$ and represented by full-image words.
    
    Consider four cases:
    \begin{enumerate}
        \item $\frac{a_+}{1 - \lambda} - \frac{a_-}{1 - \lambda^{-1}} = \frac{b_+}{1 - \lambda^{-1}} - \frac{b_-}{1 - \lambda} = 0$.
        In this case we have directly $\bv_m = \bzer$ for all $m$. So $(I, \bzer)$ can be represented by a full-image word.
        \item $\frac{a_+}{1 - \lambda} - \frac{a_-}{1 - \lambda^{-1}} = 0$, but $\frac{b_+}{1 - \lambda^{-1}} - \frac{b_-}{1 - \lambda} \neq 0$.
        When $m$ is even, $\bw_m$ is a positive multiple of $\left(\frac{b_+}{1 - \lambda^{-1}} - \frac{b_-}{1 - \lambda}\right) \bx_W$, and when $n$ is odd, $\bw_n$ is a negative multiple of $\left(\frac{b_+}{1 - \lambda^{-1}} - \frac{b_-}{1 - \lambda}\right) \bx_W$.
        Therefore, there exist positive real numbers $r_1, r_2$ such that $r_1 \bw_m + r_2 \bw_n = \bzer$.
        Since $\bw_m, \bw_n$ have integer entries, there even exist positive \emph{integers} $n_1, n_2$ such that $n_1 \bw_m + n_2 \bw_n = \bzer$.
        Therefore $(I, \bzer) = (I, \bw_m)^{n_1} (I, \bw_n)^{n_2}$ can be represented by a full-image word.

        \item $\frac{b_+}{1 - \lambda^{-1}} - \frac{b_-}{1 - \lambda} = 0$, but $\frac{a_+}{1 - \lambda} - \frac{a_-}{1 - \lambda^{-1}} \neq 0$.
        By symmetry, this case follows from the previous one.

        \item Both $\frac{a_+}{1 - \lambda} - \frac{a_-}{1 - \lambda^{-1}}$ and $\frac{b_+}{1 - \lambda^{-1}} - \frac{b_-}{1 - \lambda}$ are non-zero.
        In this case, consider the vectors $\bv_{2m}$, $\bv_{2m+1}$, $\bw_{2m}$, $\bw_{2m+1}$ when $m$ tends to infinity.
        Since $\lambda < -1$, we have $\lim_{m \rightarrow \infty} (1 - \lambda^{2m}) = - \infty$, $\lim_{m \rightarrow \infty} (1 - \lambda^{2m+1}) = + \infty$, $\lim_{m \rightarrow \infty} (1 - \lambda^{-2m}) = 1$, $\lim_{m \rightarrow \infty} (1 - \lambda^{-2m-1}) = 1$.
        Therefore, the \emph{direction} of $\bv_{2m}$ tends towards $- \left(\frac{a_+}{1 - \lambda} - \frac{a_-}{1 - \lambda^{-1}}\right)\bx_V$, the direction of $\bv_{2m+1}$ tends towards $\left(\frac{a_+}{1 - \lambda} - \frac{a_-}{1 - \lambda^{-1}}\right) \bx_V$, the direction of $\bw_{2m}$ tends towards $\left(\frac{b_+}{1 - \lambda^{-1}} - \frac{b_-}{1 - \lambda}\right) \bx_W$, the direction of $\bw_{2m+1}$ tends towards $- \left(\frac{b_+}{1 - \lambda^{-1}} - \frac{b_-}{1 - \lambda}\right) \bx_W$.
        Hence, when $m$ is large enough, the four vectors $\bv_{2m}$, $\bv_{2m+1}$, $\bw_{2m}$, $\bw_{2m+1}$ in $\Z^2$ generate $\Q^2$ as a $\Qp$-cone.
        Hence, there exist positive integers $t_1, t_2, t_3, t_4$ such that
        \[
        (I, \bv_{2m})^{t_1} (I, \bv_{2m+1})^{t_2} (I, \bw_{2m})^{t_3} (I, \bw_{2m+1})^{t_4} = (I, \bzer).
        \]        
        Therefore $(I, \bzer)$ can be represented by a full-image word.
    \end{enumerate}
    In all cases, $(I, \bzer)$ can be represented by a full-image word, so $\sgmG$ is a group by Lemma~\ref{lem:grpword}.
\end{proof} 

\subsection{$H$ is generated by a positive scale $A$}
This is the most technical case in this section.
We show that if $A$ is a positive scale, then we can decide whether $\sgmG$ is a group in PTIME.
Let $P = (\bx_V, \bx_W) \in \SL(2, \R)$ be a change of basis matrix such that $P^{-1} A P = A'$, where $A'$ is diagonal and can be written as
\[
    A' = 
    \begin{pmatrix}
        \lambda & 0 \\
        0 & \lambda^{-1} \\
    \end{pmatrix}
\]
with $\lambda > 1$.
For $i = 1, \ldots, K$, let $A'_i \coloneqq P^{-1} A_i P$ and $\ba'_i \coloneqq P^{-1} \ba_i$, with
\[
    A'_i = 
    \begin{pmatrix}
        \lambda^{z_i} & 0 \\
        0 & \lambda^{-z_i} \\
    \end{pmatrix}, z_i \in \Z,
    \quad
    \ba'_i = 
    \begin{pmatrix}
        a_i \\
        b_i \\
    \end{pmatrix}.
\]
These are the forms of $A_i$ and $\ba_i$ under the new basis $(\bx_V, \bx_W)$, where $\bx_V, \bx_W$ are eigenvectors of $A$ such that $A \bx_V = \lambda \bx_V, A \bx_W = \lambda^{-1} \bx_W$.

Define the sets
\begin{align*}
J_+ \coloneqq \{i \mid z_i > 0\}, \;
J_- \coloneqq \{i \mid z_i < 0\}, \;
J_0 \coloneqq \{i \mid z_i = 0\}.
\end{align*}
Then $J_+ \cup J_- \cup J_0 = \{1, \ldots, K\}$.
Since $\langle A_1, \ldots, A_K \rangle$ is isomorphic to $\Z$, we have $J_+ \neq \emptyset, J_- \neq \emptyset$.

Divide the set $\R^2$ into nine parts:
\begin{align*}
    & \mR_{++} = \{(x, y) \mid x > 0, y > 0\}, 
     \mR_{+0} = \{(x, y) \mid x > 0, y = 0\}, 
     \mR_{+-} = \{(x, y) \mid x > 0, y < 0\}, \\
    & \mR_{0+} = \{(x, y) \mid x = 0, y > 0\}, 
     \mR_{00} = \{(x, y) \mid x = 0, y = 0\}, 
     \mR_{0-} = \{(x, y) \mid x = 0, y < 0\}, \\
    & \mR_{-+} = \{(x, y) \mid x < 0, y > 0\}, 
     \mR_{-0} = \{(x, y) \mid x < 0, y = 0\}, 
     \mR_{--} = \{(x, y) \mid x < 0, y < 0\}.
\end{align*}
Each part is called a \emph{cell}. See Figure~\ref{fig:cellsSA}.
    \begin{figure}[ht!]
        \centering
        \begin{minipage}[t]{.45\textwidth}
            \centering
            \includegraphics[width=\textwidth, keepaspectratio, trim={3cm 0cm 2cm 0cm},clip]{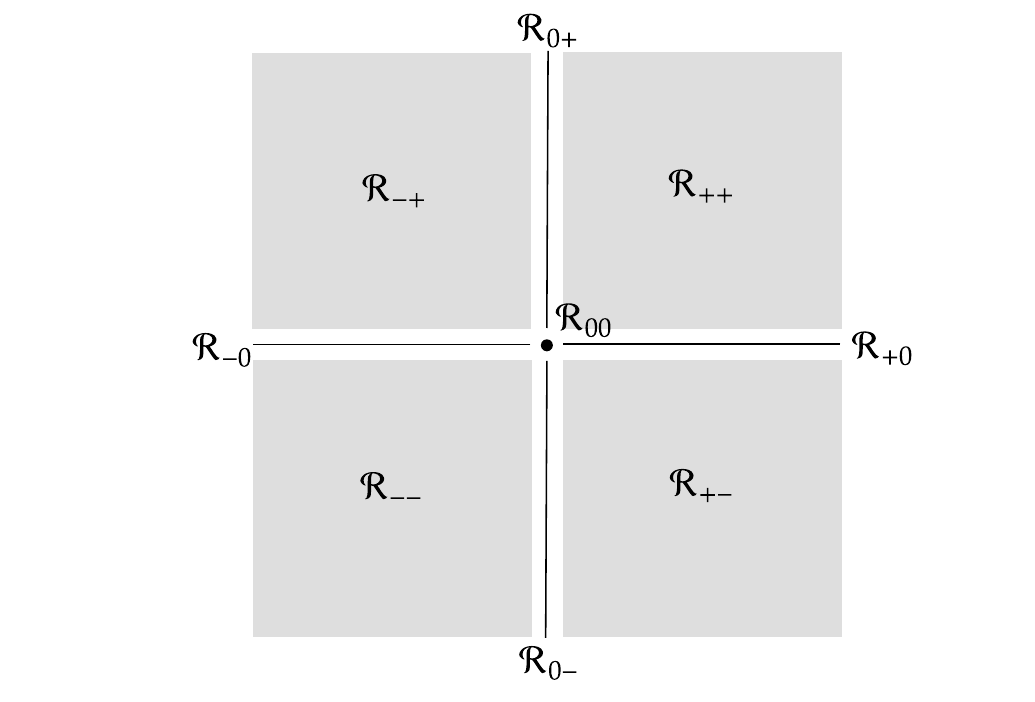}
            \caption{Cells.}
            \label{fig:cellsSA}
        \end{minipage}
        \hfill
        \begin{minipage}[t]{0.45\textwidth}
            \centering
            \includegraphics[width=0.9\textwidth, keepaspectratio, trim={4cm 0.5cm 3cm 0cm},clip]{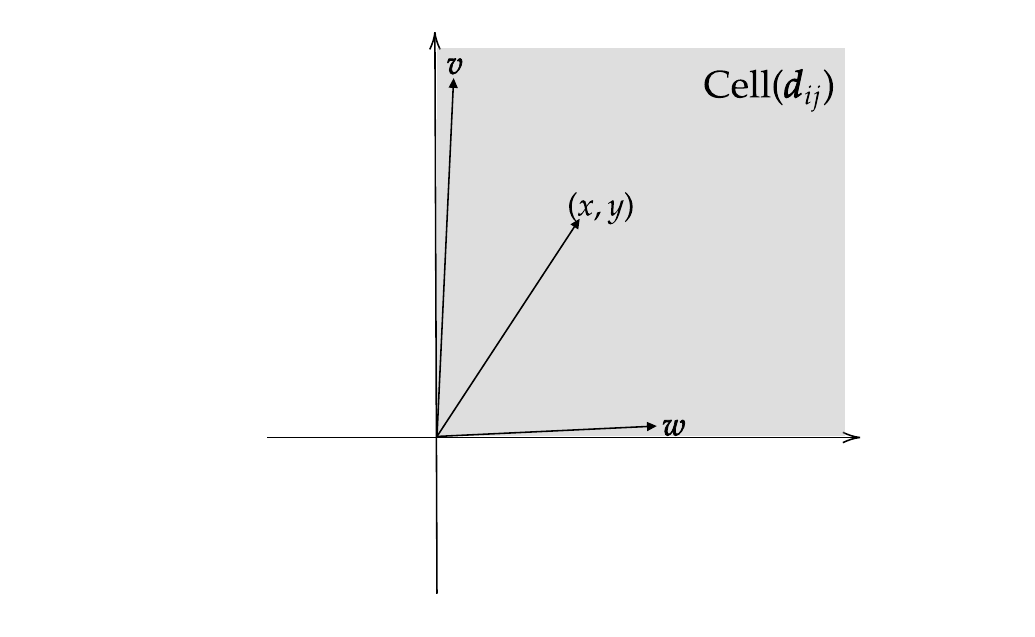}
            \caption{Illustration for Lemma~\ref{lem:fullcell}.}
            \label{fig:vw}
        \end{minipage}
    \end{figure}
    
For an element $\bx \in \R^2$, denote by $\Cell(\bx)$ the cell which it belongs to.
Writing $\bx = (x_1, x_2)^{\top}$, it is easy to see that
\begin{equation}\label{eq:defcell}
\Cell(\bx) = \left\{(r_1 x_1, r_2 x_2)^{\top} \;\middle|\; r_1, r_2 \in \Rpp \right\}.
\end{equation}
In particular, $\bx$ and ${A'}^{z} \bx$ are in the same cell for all integers $z$.

For each $i \in J_+ \cup J_- \cup J_0$, denote 
\begin{align*}
n_i \coloneqq
\begin{cases}
|z_i| \quad & z_i \neq 0 \\
1 \quad & z_i = 0.
\end{cases}
\end{align*}
For each tuple $(i, j) \in J_+ \times J_-$, define the vector
\[
    \bd_{ij} \coloneqq ( d_{ija}, d_{ijb} )^{\top} \in \R^2
\]
where
\[
d_{ija} \coloneqq - \frac{a_i}{1 - \lambda^{n_i}} + \frac{a_j}{1 - \lambda^{-n_j}}, \;\; d_{ijb} \coloneqq \frac{b_i}{1 - \lambda^{-n_i}} - \frac{b_j}{1 - \lambda^{n_j}}.
\]
These expressions are defined so that
\[
({A'_i}, \ba'_i)^{n_j} ({A'_j}, \ba'_j)^{n_i} = \left(I, (1 - \lambda^{- n_i n_j}) \cdot \left( \lambda^{n_i n_j} d_{ija}, d_{ijb} \right)^{\top}\right)
\]
for all $(i, j) \in J_+ \times J_-$.

For each element $k \in J_0$, define the vector
\[
    \be_{k} \coloneqq \left(a_k, b_k \right)^{\top} \in \R^2.
\]
Similarly, this expression is defined so that
\[
({A'_k}, \ba'_k)^{n_k} = \left(I, \be_k \right)
\]
for all $k \in J_0$.
Denote by $\mG'$ the new alphabet $\{(A'_1, \ba'_1), \ldots, (A'_K, \ba'_K)\}$.
The following lemma shows that the rays $\bv \Rpp$ such that $(I, \bv) \in \langle \mG' \rangle$ can reach both ``extremities'' of the cells $\Cell(\bd_{ij}), (i, j) \in J_+ \times J_-$.
See Figure~\ref{fig:vw} for an illustration.

\begin{lem}\label{lem:fullcell}
    Let $(i, j)$ be a pair in $J_+ \times J_-$ and $(x, y)^{\top} \in \R^2$ be a vector in $\Cell(\bd_{ij})$.
    If $\lambda > 1$, then there exist elements $(I, \bv)$ and $(I, \bw)$ in $\langle\mG'\rangle$, where the vector $(x, y)^{\top}$ can be written as $r_1 \bv + r_2 \bw$ for some $r_1, r_2 \in \Rpp$.
    Furthermore, $(I, \bv)$ and $(I, \bw)$ are represented by words over the alphabet $\mG'$, such that the letters $(A'_i, \ba'_i)$ and $(A'_j, \ba'_j)$ both occur.
\end{lem}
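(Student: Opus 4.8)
The plan is to produce, inside the semigroup $\langle \mG' \rangle$, a sufficiently rich family of \emph{pure translations} $(I,\bu)$ — each represented by a word over $\mG'$ in which both letters $(A'_i,\ba'_i)$ and $(A'_j,\ba'_j)$ occur — so that every $(x,y)^\top \in \Cell(\bd_{ij})$ lies in the interior of the positive cone spanned by two of the vectors $\bu$. Taking $\bv,\bw$ to be those two vectors then immediately yields $(x,y)^\top = r_1\bv + r_2\bw$ with $r_1,r_2\in\Rpp$, which is the assertion of the lemma, and the requirement that $g_i$ and $g_j$ both occur is handled along the way.

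For the candidates I would use the words $g_i^{\,m n_j} g_j^{\,m n_i}$ and $g_j^{\,m n_i} g_i^{\,m n_j}$, where $g_i \coloneqq (A'_i,\ba'_i)$, $g_j \coloneqq (A'_j,\ba'_j)$ and $m \geq 1$. Since $z_i = n_i$ and $z_j = -n_j$, each of these words has matrix part ${A'}^{\,m n_i n_j}{A'}^{-m n_i n_j} = I$, so it is a pure translation, and both letters occur because $m n_i, m n_j \geq 1$; in particular no other generators are needed. To compute the translation vectors I would write each of $g_i^{\,m n_j}$ and $g_j^{\,m n_i}$ as a diagonal linear map conjugated by the translation carrying the origin to its (power-independent) fixed point $\boldsymbol{p} \coloneqq (I - A'_i)^{-1}\ba'_i$, resp.\ $\boldsymbol{q} \coloneqq (I - A'_j)^{-1}\ba'_j$ — the device already used in the proof of Lemma~\ref{lem:scalelim} — obtaining translations of the form $\diag(\lambda^{m n_i n_j} - 1,\ \lambda^{-m n_i n_j} - 1)(\boldsymbol{q} - \boldsymbol{p})$ and $\diag(\lambda^{-m n_i n_j} - 1,\ \lambda^{m n_i n_j} - 1)(\boldsymbol{p} - \boldsymbol{q})$.

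The rest is planar geometry. Since $\lambda > 1$, the two diagonal factors have sign patterns $(+,-)$ and $(-,+)$; comparing with the definition of $\bd_{ij}$, I would check that for every $m$ both translation vectors lie in $\Cell(\bd_{ij})$, that they never point in opposite directions, and that as $m \to \infty$ their directions converge, from inside $\Cell(\bd_{ij})$, to the two coordinate half-axes that bound this cell. Granting this, fix a target $(x,y)^\top$ in the open cell and let $\bv_1,\bw_1$ be the $m = 1$ translations, which lie on opposite sides of the cell. If the direction of $(x,y)^\top$ lies between those of $\bv_1$ and $\bw_1$, then $(x,y)^\top = r_1\bv_1 + r_2\bw_1$ with $r_1,r_2 > 0$. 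If instead it lies between $\bv_1$ and the half-axis on the $\bv_1$ side, pick $m$ large enough that $g_i^{\,m n_j} g_j^{\,m n_i}$ points strictly between that half-axis and $(x,y)^\top$, and pair it with $\bw_1$; the situation on the other side is symmetric, using the second family. In every case $(x,y)^\top$ is a strictly positive combination of two translation vectors coming from words over $\mG'$ that use both $g_i$ and $g_j$. The degenerate cells, where $\Cell(\bd_{ij})$ is a coordinate half-line or the singleton $\{\bzer\}$, reduce to an easier direct check, the two families then being aligned with the half-line (resp.\ both trivial).

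The step I expect to be the real work is the sign-and-angle bookkeeping above: proving that the computed translation vectors never leave $\Cell(\bd_{ij})$ and that their directions genuinely sweep all the way onto the two bounding half-axes, so that no ray of the open cell is missed. Once that directional picture is fixed, writing an arbitrary point of the cell as a strictly positive combination of two of the vectors is elementary.
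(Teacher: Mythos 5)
Your proposal is correct and follows essentially the same route as the paper: the paper uses exactly the words $({A'_i},\ba'_i)^{p n_j}({A'_j},\ba'_j)^{p n_i}$ and $({A'_j},\ba'_j)^{p n_i}({A'_i},\ba'_i)^{p n_j}$, computes their translation parts explicitly as $(1-\lambda^{-pn_in_j})(\lambda^{pn_in_j}d_{ija}, d_{ijb})^{\top}$ and $(1-\lambda^{-pn_in_j})(d_{ija}, \lambda^{pn_in_j}d_{ijb})^{\top}$, and picks a single large $p$ so that $(x,y)^{\top}$ is sandwiched between the two directions. The ``sign-and-angle bookkeeping'' you defer is immediate from these closed forms (the scalar prefactor is positive and the components carry the signs of $d_{ija},d_{ijb}$, so both vectors stay in $\Cell(\bd_{ij})$ and sweep to the bounding half-axes as $p\to\infty$), so your fixed-point derivation is just a repackaging of the paper's direct computation.
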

\begin{proof}
    For any $p \in \Zpp$, let $\bv_p, \bw_p$ be such that
    \[
        (I, \bv_p)  \coloneqq ({A'_i}, \ba'_i)^{p n_j} ({A'_j}, \ba'_j)^{p n_i}, \quad
        (I, \bw_p)  \coloneqq ({A'_j}, \ba'_j)^{p n_i} ({A'_i}, \ba'_i)^{p n_j}.
    \]
    By direct computation, we have
    \begin{equation}\label{eq:justify}
        \bv_p  = (1 - \lambda^{- p n_i n_j}) \cdot \left( \lambda^{p n_i n_j} d_{ija}, d_{ijb} \right)^{\top}, \quad
        \bw_p  = (1 - \lambda^{- p n_i n_j}) \cdot \left( d_{ija}, \lambda^{p n_i n_j} d_{ijb} \right)^{\top}.
    \end{equation}
    Since $\lambda > 1$, we have $1 - \lambda^{- p n_i n_j} > 0$.

    If $\Cell(\bd_{ij})$ is of dimension one or zero, then $d_{ija} = 0$ or $d_{ijb} = 0$.
    In both cases, $(x, y)^{\top}, \bv_p, \bw_p$ are linearly dependent and have the same direction, so $(x, y)^{\top}$ can be written as $r_1 \bv_p + r_2 \bw_p$ for some $r_1, r_2 \in \Rpp$.

    If $\Cell(\bd_{ij})$ is of dimension two, then $d_{ija} \neq 0$ and $d_{ijb} \neq 0$.
    Let $p$ be large enough so that $\frac{x}{y}$ is between $\frac{\lambda^{pn_i n_j} d_{ija}}{d_{ijb}}$ and $\frac{d_{ija}}{\lambda^{pn_i n_j} d_{ijb}}$.
    Then $(x, y)^{\top}$ can be written as $r_1 \bv_p + r_2 \bw_p$ for some $r_1, r_2 \in \Rpp$.
\end{proof} 

A similar lemma can be shown for $\Cell(\be_{k})$: the rays $\bv \Rpp$ such that $(I, \bv) \in \langle \mG' \rangle$ can fill up the cells $\Cell(\be_{k}), k \in J_0$.

\begin{restatable}{lem}{lemfullcelle}\label{lem:fullcell2}
    Suppose $J_+$ and $J_-$ are non-empty.
    Let $k \in J_0$, and $(x, y)^{\top}  \in \Cell(\be_{k})$.
    If $\lambda > 1$, then there exist elements $(I, \bv)$ and $(I, \bw)$ in $\langle\mG'\rangle$, such that the vector $(x, y)^{\top}$ can be written as $r_1 \bv + r_2 \bw$ for some $r_1, r_2 \in \Rpp$.
    Furthermore, $(I, \bv)$ and $(I, \bw)$ are represented as words over the alphabet $\mG'$, such that the letter $(A'_k, \ba'_k)$ occurs.
\end{restatable}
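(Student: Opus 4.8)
The plan is to mirror the construction in the proof of Lemma~\ref{lem:fullcell}, inserting powers of the generator $(A'_k, \ba'_k)$ in the middle. Since $k \in J_0$ we have $z_k = 0$, hence $A'_k = I$ and $(A'_k, \ba'_k) = (I, \be_k)$ with $\be_k = (a_k, b_k)^\top$. First I would dispose of the degenerate case $\dim \Cell(\be_k) \le 1$, where one of $a_k, b_k$ vanishes (or both, if $\be_k = \bzer$): then every $(x,y)^\top \in \Cell(\be_k)$ is a scalar multiple of $\be_k$, and one may simply take $(I, \bv) = (I, \bw) = (A'_k, \ba'_k) \in \langle \mG' \rangle$, which is represented by the one-letter word $(A'_k, \ba'_k)$ and for which $(x,y)^\top = r_1 \bv + r_2 \bw$ holds for suitable $r_1, r_2 \in \Rpp$.

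For the main case $a_k \neq 0$ and $b_k \neq 0$, I would reuse the factorisations from the proof of Lemma~\ref{lem:fullcell}: write $(A'_i, \ba'_i)^{p n_j} = (D_p, \bs_p)$ and $(A'_j, \ba'_j)^{p n_i} = (D_p^{-1}, \bt_p)$, where $D_p = \diag(\lambda^{p n_i n_j}, \lambda^{-p n_i n_j})$, so that $\bv_p = D_p \bt_p + \bs_p$ and $\bw_p = D_p^{-1} \bs_p + \bt_p$. For $p, q \in \Zpp$ set
\[
(I, \bv_{p,q}) \coloneqq (A'_i, \ba'_i)^{p n_j} (A'_k, \ba'_k)^q (A'_j, \ba'_j)^{p n_i}, \qquad (I, \bw_{p,q}) \coloneqq (A'_j, \ba'_j)^{p n_i} (A'_k, \ba'_k)^q (A'_i, \ba'_i)^{p n_j}.
\]
Using $(A'_k, \ba'_k)^q = (I, q\be_k)$, the matrix parts collapse to $I$ since $D_p D_p^{-1} = I$, and a direct computation gives $\bv_{p,q} = q D_p \be_k + \bv_p$ and $\bw_{p,q} = q D_p^{-1} \be_k + \bw_p$; both defining words contain the letter $(A'_k, \ba'_k)$, as required.

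The crux is then the following limiting argument. As $q \to \infty$ with $p$ fixed, the directions of $\bv_{p,q}$ and $\bw_{p,q}$ converge to those of $D_p \be_k = (\lambda^{p n_i n_j} a_k, \lambda^{-p n_i n_j} b_k)^\top$ and $D_p^{-1} \be_k = (\lambda^{-p n_i n_j} a_k, \lambda^{p n_i n_j} b_k)^\top$, both of which lie in $\Cell(\be_k)$ (the signs of the coordinates match those of $a_k, b_k$) and have slopes $\lambda^{-2 p n_i n_j} b_k/a_k$ and $\lambda^{2 p n_i n_j} b_k/a_k$. Since $\lambda > 1$, choosing $p$ large makes $\lambda^{-2 p n_i n_j}|b_k/a_k| < |y/x| < \lambda^{2 p n_i n_j}|b_k/a_k|$, so $(x,y)^\top$ lies strictly inside the open convex cone spanned by $D_p \be_k$ and $D_p^{-1} \be_k$; fixing such a $p$ and then taking $q$ large enough that $\bv_{p,q}, \bw_{p,q}$ are close enough in direction to $D_p \be_k, D_p^{-1} \be_k$, the vector $(x,y)^\top$ still lies in the open cone they span, i.e.\ $(x,y)^\top = r_1 \bv_{p,q} + r_2 \bw_{p,q}$ for some $r_1, r_2 \in \Rpp$.

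I expect the main obstacle to be making this two-stage argument (first choose $p$, then $q$) fully rigorous: quantifying ``close enough in direction'' and, in particular, carefully checking the sign bookkeeping for all four sign patterns of $(a_k, b_k)$ so that $D_p^{\pm 1} \be_k$ genuinely sandwich $(x,y)^\top$ within one open quadrant. By contrast, the algebraic identity $\bv_{p,q} = q D_p \be_k + \bv_p$ is routine, being essentially $(D_p, \bs_p)(I, q\be_k)(D_p^{-1}, \bt_p) = (I, q D_p \be_k + D_p \bt_p + \bs_p)$, which is already implicit in the proof of Lemma~\ref{lem:fullcell}.
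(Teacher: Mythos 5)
Your proposal is correct and follows essentially the same route as the paper: the same words $(A'_i, \ba'_i)^{p n_j} (A'_k, \ba'_k)^q (A'_j, \ba'_j)^{p n_i}$ and their reversals, the same computation $\bv_{p,q} = \bv_p + q D_p \be_k$, and the same two-stage limit (fix $p$ large so that $D_p^{\pm 1}\be_k$ strictly sandwich $(x,y)^{\top}$ within $\Cell(\be_k)$, then take $q$ large). Your sign bookkeeping is in fact slightly more careful than the paper's, which contains a typo repeating the same limit direction twice; the degenerate low-dimensional case is also handled equivalently (your single-letter word versus the paper's $p=0$).
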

\begin{proof}
    Let $(i, j)$ be any pair in $J_+ \times J_-$.
    For any $p \in \Zp$, $q \in \Zpp$, denote
    \[
        (I, \bv_{pq}) \coloneqq {A'_i}^{p n_j} {A'_k}^{q} {A'_j}^{p n_i}, \quad (I, \bw_{pq}) \coloneqq {A'_j}^{p n_i} {A'_k}^{q} {A'_i}^{p n_j},
    \]
    By direct computation, we have
    \[
        \bv_{pq} = (1 - \lambda^{- p n_i n_j}) \cdot \left( \lambda^{p n_i n_j} d_{ija}, d_{ijb} \right)^{\top} 
        + q \cdot (\lambda^{p n_i n_j} a_k, \lambda^{- p n_i n_j} b_k)^{\top},
    \]
    \[
        \bw_{pq} = (1 - \lambda^{- p n_i n_j}) \cdot \left( d_{ija}, \lambda^{p n_i n_j} d_{ijb} \right) ^{\top}
        + q \cdot  (\lambda^{- p n_i n_j} a_k, \lambda^{p n_i n_j} b_k)^{\top}.
    \]
    Since $\lambda > 1$, we have $1 - \lambda^{- p n_i n_j} > 0$.

    If $\Cell(\be_{k})$ has dimension one or zero, then take $p = 0$ and the statement is trivial.
    Suppose that $\Cell(\be_{k})$ has dimension two, then we have $a_k \neq 0$ and $b_k \neq 0$.
    Fix a large enough $p$ so that $\frac{x}{y}$ is between $\frac{\lambda^{pn_i n_j} a_k}{\lambda^{- pn_i n_j} b_k}$ and $\frac{\lambda^{-pn_i n_j} a_k}{\lambda^{pn_i n_j} b_k}$.
    Then $(x, y)^{\top}$ can be written as $r_1 (\lambda^{p n_i n_j} a_k, \lambda^{- p n_i n_j} b_k)^{\top} + r_2 (\lambda^{- p n_i n_j} a_k, \lambda^{p n_i n_j} b_k)^{\top}$ for some $r_1, r_2 \in \Rpp$.
    When $q$ tends towards infinity, the direction of the vectors $\bv_{pq}, \bw_{pq}$ tends respectively to $(\lambda^{p n_i n_j} a_k, \lambda^{- p n_i n_j} b_k)^{\top}$ and $(\lambda^{- p n_i n_j} a_k, \lambda^{p n_i n_j} b_k)^{\top}$.
    Therefore, for large enough $q$, the vector $(x, y)^{\top}$ can be written as $r'_1 \bv_{pq} + r'_2 \bw_{pq}$ for some $r'_1, r'_2 \in \Rpp$.
\end{proof} 

Define the \emph{radical} $\widehat{(A'_i, \ba'_i)}$ of $(A'_i, \ba'_i)$ as
    \[
        \widehat{(A'_i, \ba'_i)} \coloneqq \left(
        \begin{pmatrix}
            \lambda & 0 \\
            0 & \lambda^{-1} \\
        \end{pmatrix},
        \begin{pmatrix}
            a_i \cdot \frac{1 - \lambda}{1 - \lambda^{n_i}} \\
            b_i \cdot \frac{1 - \lambda^{-1}}{1 - \lambda^{-{n_i}}} \\
        \end{pmatrix}
        \right)
    \]
    if $z_i = n_i > 0$, and
    \[
        \widehat{(A'_i, \ba'_i)} \coloneqq \left(
        \begin{pmatrix}
            \lambda^{-1} & 0 \\
            0 & \lambda \\
        \end{pmatrix},
        \begin{pmatrix}
            a_i \cdot \frac{1 - \lambda^{-1}}{1 - \lambda^{- n_i}} \\
            b_i \cdot \frac{1 - \lambda}{1 - \lambda^{n_i}} \\
        \end{pmatrix}
        \right)
    \]
    if $z_i = -n_i < 0$, and 
    \[
        \widehat{(A'_i, \ba'_i)} \coloneqq (A'_i, \ba'_i)
    \]
    if $z_i = 0$.
    This is defined so that $\widehat{(A'_i, \ba'_i)}^{n_i} = (A'_i, \ba'_i)$ in all cases.

Define the alphabet
    \[
        \widehat{\mG} \coloneqq \left\{\widehat{(A'_1, \ba'_1)}, \ldots, \widehat{(A'_K, \ba'_K)}\right\}.
    \]
Define the following union of cells:
    \begin{equation}\label{eq:defS}
        \mS \coloneqq \left(\bigcup_{i \in J_+, j \in J_-} \Cell(\bd_{ij}) \right) \bigcup \left( \bigcup_{k \in J_0} \Cell(\be_{k}) \right).
    \end{equation}

\begin{restatable}{lem}{lemradcomb}\label{lem:radcomb}
    Let $w \coloneqq (C_1, \bc_1) \cdots (C_M, \bc_M)$ be a full-image word over the alphabet $\widehat{\mG}$, such that $(C_1, \bc_1) \cdot \cdots (C_M, \bc_M) = (I, \bx)$.
    Then there exists a finite non-empty set of vectors $\{\bs_1, \ldots, \bs_m\} \subseteq \mS$ such that the following conditions are satisfied:
    \begin{enumerate}[noitemsep, label = (\roman*)]
        \item $r_1 \bs_1 + \cdots + r_m \bs_m = \bx$ for some strictly positive reals $r_1, \ldots, r_m$.
        \item For each $i \in J_+$, there exist $j \in J_-$ and $\ell \in \{1, \ldots, m\}$, such that $\bs_{\ell} \in \Cell(\bd_{ij})$.
        \item For each $j \in J_-$, there exist $i \in J_+$ and $\ell \in \{1, \ldots, m\}$, such that $\bs_{\ell} \in \Cell(\bd_{ij})$.
        \item For each $k \in J_0$, there exist $\ell \in \{1, \ldots, m\}$ such that $\bs_{\ell} \in \Cell(\be_{k})$.
    \end{enumerate}
\end{restatable}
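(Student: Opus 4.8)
The plan is to track, letter by letter, how $w$ builds up the translation vector $\bx$, and then to regroup these per‑letter contributions into chunks that each lie inside one of the cells $\Cell(\bd_{ij})$ or $\Cell(\be_k)$. Concretely, I would set $s_0 \coloneqq 0$ and let $s_t$ be the sum of the matrix exponents of the first $t$ letters of $w$, where an occurrence of $\widehat{(A'_i, \ba'_i)}$ counts for $+1$ if $i \in J_+$, for $-1$ if $i \in J_-$, and for $0$ if $i \in J_0$; here I use that the matrix part of $\widehat{(A'_i, \ba'_i)}$ is $\diag(\lambda, \lambda^{-1})$, $\diag(\lambda^{-1}, \lambda)$ or $I$ in these three cases. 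Then $C_1 \cdots C_{\ell-1} = \diag(\lambda^{s_{\ell-1}}, \lambda^{-s_{\ell-1}})$, and from $(C_1, \bc_1)\cdots(C_M, \bc_M) = (I, \bx)$ one reads off $s_M = 0$ (using $\lambda > 1$) together with
\[
\bx = \sum_{\ell=1}^{M} \diag\!\left(\lambda^{s_{\ell-1}}, \lambda^{-s_{\ell-1}}\right)\bc_\ell .
\]
The key elementary observation is that a diagonal matrix with positive entries maps every cell into itself, so the $\ell$-th summand always lies in $\Cell(\bc_\ell)$.

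First I would dispose of the $J_0$ letters. If the $\ell$-th letter is $\widehat{(A'_k, \ba'_k)}$ with $k \in J_0$, then $\bc_\ell = \be_k$, so its summand lies in $\Cell(\be_k) \subseteq \mS$; since $w$ is full-image, every $k \in J_0$ occurs, which will take care of condition (iv).

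The heart of the argument is to pair up the $J_+$ and $J_-$ letters. I would view $s_0, \dots, s_M$ as a walk on $\Z$ from $0$ to $0$ whose $+1$ steps are exactly the $J_+$ letters and whose $-1$ steps are exactly the $J_-$ letters. For each half-integer level $h + \tfrac12$ the walk crosses it upward as often as it crosses it downward (it starts and ends on the same side of the level), so one can fix, level by level, a bijection between upward and downward crossings; this produces a perfect matching between the $J_+$ letters and the $J_-$ letters of $w$. A matched pair consists of a $J_+$ letter $\widehat{(A'_i, \ba'_i)}$ crossing $h + \tfrac12$ upward — so at that point $s_{\ell-1} = h$ — and a $J_-$ letter $\widehat{(A'_j, \ba'_j)}$ crossing it downward — so there $s_{\ell'-1} = h+1$. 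The two letters need not be adjacent in $w$, but that is irrelevant since we only regroup terms of the displayed sum. A direct computation, entirely parallel to the one yielding the formula for $\bv_p$ in the proof of Lemma~\ref{lem:fullcell}, shows that the combined contribution of a matched pair,
\[
\diag\!\left(\lambda^{h}, \lambda^{-h}\right)\bc_\ell \;+\; \diag\!\left(\lambda^{h+1}, \lambda^{-h-1}\right)\bc_{\ell'},
\]
is of the form $D\bd_{ij}$ for a diagonal matrix $D$ with positive diagonal entries, hence lies in $\Cell(\bd_{ij}) \subseteq \mS$.

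To finish, I would regroup the displayed sum as the sum of the matched-pair contributions plus the $J_0$-letter contributions; each summand lies in some $\Cell(\bd_{ij})$ or $\Cell(\be_k)$, all inside $\mS$. Taking $\{\bs_1, \dots, \bs_m\}$ to be the distinct vectors that arise, each assigned the coefficient equal to its multiplicity (so $r_\ell > 0$), gives condition (i); the set is non-empty because $J_+ \ne \emptyset$ and $w$ full-image force at least one matched pair. Condition (ii) holds because any $i \in J_+$ occurs in $w$ and is therefore matched with some $j \in J_-$, so some $\bs_\ell$ lies in $\Cell(\bd_{ij})$; condition (iii) is symmetric, and (iv) was handled above. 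The step I expect to be the main obstacle is the matching: one must argue that upward steps of the walk are precisely the $J_+$ letters and downward steps precisely the $J_-$ letters, so that the balanced-crossing matching respects the $J_+/J_-$ partition, and then push through the routine but fiddly computation verifying that a matched pair's combined contribution at an arbitrary offset $h$ falls exactly into $\Cell(\bd_{ij})$ and not a neighbouring cell.
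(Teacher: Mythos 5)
Your argument is correct, but it takes a genuinely different route from the paper's. The paper proves the lemma by induction on $M$, peeling letters off the two ends of the word: the crucial case is when the first and last letters have opposite exponent signs, where it computes $\bc_1 + C_1\bc_M \in \Cell(\bd_{ij})$ and observes that the inner factor's contribution, being hit by the positive diagonal matrix $C_1$, stays inside $\mS$ cell by cell. You instead give a global, non-recursive decomposition: writing $\bx = \sum_{\ell}\diag(\lambda^{s_{\ell-1}},\lambda^{-s_{\ell-1}})\bc_\ell$ and matching $+1$ steps with $-1$ steps of the exponent walk via balanced level crossings, so that each matched pair contributes $\diag(\lambda^{h},\lambda^{-h})\bigl(\bc_\ell + \diag(\lambda,\lambda^{-1})\bc_{\ell'}\bigr)$ --- exactly the paper's key quantity conjugated by a positive diagonal matrix, hence still in $\Cell(\bd_{ij})$. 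Both proofs ultimately rest on the same two facts (positive diagonal matrices preserve cells, and the paired $J_+$/$J_-$ radical contribution is a positive diagonal multiple of $\bd_{ij}$), but your matching makes the bookkeeping for conditions (ii)--(iv) explicit and immediate, whereas the paper's induction leaves that tracking somewhat implicit ("easily seen from the induction procedure"); the induction, on the other hand, avoids having to set up the crossing bijection. One small point worth spelling out if you write this up: non-emptiness also holds when a matched pair happens to contribute the zero vector is not an issue here, since $\bzer \notin \Cell(\bd_{ij})$ only if $\bd_{ij} = \bzer$, in which case $\Cell(\bd_{ij}) = \mR_{00} = \{\bzer\}$ and the contribution still lies in that cell, exactly as in the paper's treatment.
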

\begin{proof}
    We call a vector of the form $r_1 \bs_1 + \cdots + r_m \bs_m, m \geq 1, r_i \in \Rpp, \bs_i \in \mS$, an \emph{$\Rpp$-linear combination} of elements in $\mS$.
    For $i = 1, \ldots, M$, write
    \[
    (C_i, \bc_i) = \left(
        \begin{pmatrix}
            \lambda^{t_i} & 0 \\
            0 & \lambda^{-t_i} \\
        \end{pmatrix},
        \begin{pmatrix}
            c_i \\
            d_i \\
        \end{pmatrix}
        \right),
    \]
    where $t_i \in \{-1, 0, 1\}$.
    We show that if $(C_1, \bc_1) \cdot \cdots (C_M, \bc_M) = (I, \bx)$ then $\bx$ can be written as an $\Rpp$-linear combination of elements in $\mS$.
    We use induction on $M$.
    When $M = 1$, $(C_1, \bc_1) = (I, \be_k)$ for some $k \in J_0$, and the statement is obvious.
    When $M \geq 2$, distinguish the following three cases.
    
    \begin{enumerate}[label = (\arabic*)]
    
        \item \textbf{$t_1 t_M = 1$.}
        Suppose $t_1 = t_M = 1$, the case where $t_1 = t_M = -1$ can be done analogously.

        Since $t_1 = 1 > 0$ and $t_1 + \cdots + t_{M-1} = -1 < 0$, there must exist $2 \leq i \leq M-2$ such that $t_1 + \cdots + t_i = 0$.
        By induction hypothesis, 
        \[
            (C_1, \bc_1) \cdots (C_{i}, \bc_{i}) = (I, \byy), \quad (C_{i+1}, \bc_{i+1}) \cdots (C_{M}, \bc_{M}) = (I, \byy'),
        \]
        where $\byy$ and $\byy'$ can be written as an $\Rpp$-linear combination of elements in $\mS$.
        Therefore $\bx = \byy + \byy'$ also satisfies this claim.
        
        \item \textbf{$t_1 t_M = - 1$.}
        In this case, $C_2 \cdots C_{M-1} = C_1 C_2 \cdots C_{M}$ $= I$.
        Define 
        \[
        w' \coloneqq (C_2, \bc_2) \cdots (C_{M-1}, \bc_{M-1}).
        \]
        Then the product of $w'$ is of the form $(I, \bx')$, where $\bx'$ is either $\bzer$ (when $w'$ is empty), or $\bx'$ is an $\Rpp$-combination of elements in $\mS$ by the induction hypothesis.
        Hence
        \[
            (C_1, \bc_1) \cdots (C_{M}, \bc_{M})
            = (C_1, \bc_1) \cdot (I, \bx') \cdot (C_{M}, \bc_{M})
            = (I, \bc_1 + C_1 \bc_{M} + C_1 \bx').
        \]
        We claim that $\bc_1 + C_1 \bc_{M} \in \Cell(\bd_{ij})$ for some indices $i \in J_+, j \in J_-$.
        First suppose $t_1 = 1, t_M = - 1$.
        Let $i \in J_+, j \in J_-$ be indices such that $\widehat{(A'_i, \ba'_i)} = (C_1, \bc_1)$ and $\widehat{(A'_j, \ba'_j)} = (C_M, \bc_M)$, then
        \begin{align*}
        \bc_1 + C_1 \bc_{M} = & \left(a_j \cdot \frac{\lambda - 1}{1 - \lambda^{- n_j}} + a_i \cdot \frac{1 - \lambda}{1 - \lambda^{n_i}}, b_j \cdot \frac{\lambda^{-1} - 1}{1 - \lambda^{n_j}} + b_i \cdot \frac{1 - \lambda^{-1}}{1 - \lambda^{-n_i}} \right)^{\top} \\
        = & \left( (\lambda - 1) d_{ija}, (1 - \lambda^{-1}) d_{ijb} \right)^{\top} \\
        \in & \Cell(\bd_{ij}). \quad \quad \quad \quad \text{ (by Equation~\eqref{eq:defcell}) }
        \end{align*}

        Next suppose $t_1 = -1, t_M = 1$.
        Let $i \in J_+, j \in J_-$ be indices such that $\widehat{(A'_j, \ba'_j)} = (C_1, \bc_1)$ and $\widehat{(A'_i, \ba'_i)} = (C_M, \bc_M)$, then
        \begin{align*}
        \bc_1 + C_1 \bc_{M} = & \left(a_i \cdot \frac{\lambda^{-1} - 1}{1 - \lambda^{n_i}} + a_j \cdot \frac{1 - \lambda^{-1}}{1 - \lambda^{- n_j}}, b_i \cdot \frac{\lambda - 1}{1 - \lambda^{- n_i}} + b_j \cdot \frac{1 - \lambda}{1 - \lambda^{n_j}} \right)^{\top} \\
        = & \left( (1 - \lambda^{-1}) d_{ija}, (\lambda - 1) d_{ijb} \right)^{\top} \\
        \in & \Cell(\bd_{ij}).
        \end{align*}

        Hence, in both cases, $\bc_1 + C_1 \bc_{M} \in \Cell(\bd_{ij}) \subseteq \mS$.
        We then show that $\bc_1 + C_1 \bc_{M} + C_1 \bx'$ is an $\Rpp$-combination of elements in $\mS$.
        Since $\bx'$ is either zero or an $\Rpp$-combination of elements in $\mS$, write $\bx' = \sum_{i=1}^m r_i \bs_i$, where $m \geq 0, r_i > 0, \bs_i \in \mS$.
        Then $C_1 \bx' = \sum_{i=1}^m r_i C_1 \bs_i$ is still an $\Rpp$-combination of elements in $\mS$ by Equation~\eqref{eq:defcell}.
        Hence, $\bc_1 + C_1 \bc_{M} + C_1 \bx'$ is an $\Rpp$-combination of elements in $\mS$.
        \item \textbf{$t_1 t_M = 0$.}
        Suppose $t_1 = 0$, the case where $t_M = 0$ can be done analogously.
        
        By induction hypothesis, 
        \[
            (C_1, \bc_1) = (I, \byy), \quad (C_{2}, \bc_{2}) \cdots (C_{M}, \bc_{M}) = (I, \byy'),
        \]
        where $\byy$ and $\byy'$ can be written as an $\Rpp$-linear combination of elements in $\mS$.
        Therefore $\bx = \byy + \byy'$ also satisfies this claim.
    \end{enumerate}
    Therefore we have found an $\Rpp$-linear combination that satisfies (i).
    The following can be easily seen from the above induction procedure: if for some $i \in J_+$ the letter $\widehat{(A'_i, \ba'_i)}$ appears in $w$, then the $\Rpp$-linear combination contains a vector $\bs_{\ell}$ in the cell $\Cell(\bd_{ij})$ for some $j \in J_-$.
    Since $w$ is full-image, the condition (ii) in the statement of the lemma must hold.
    Similarly, the conditions (iii) and (iv) must also hold.
\end{proof}

\begin{prop}\label{prop:idtocells}
    The semigroup $\sgmG$ is a group if and only if there exists a finite set of vectors $\{\bs_1, \ldots, \bs_m\} \subseteq \mS$ that satisfies the four conditions (i)-(iv) in Lemma~\ref{lem:radcomb} with $\bx = \bzer$.
\end{prop}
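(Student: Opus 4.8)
The plan is to transport both implications to the conjugated alphabet $\mG' = \{(A'_1, \ba'_1), \ldots, (A'_K, \ba'_K)\}$, on which Lemmas~\ref{lem:fullcell}, \ref{lem:fullcell2} and~\ref{lem:radcomb} are stated. Recall that $(A_i, \ba_i) \mapsto (A'_i, \ba'_i)$ is conjugation by $\diag(P, 1)$, hence extends to a semigroup isomorphism $\sgmG \to \langle \mG' \rangle$, and that by Lemma~\ref{lem:grpword} the semigroup $\sgmG$ is a group if and only if $(I, \bzer)$ is represented by a full-image word over $\mG'$. The proposition then amounts to relating such a full-image word to a subset of $\mS$ satisfying conditions (i)--(iv) of Lemma~\ref{lem:radcomb} with $\bx = \bzer$.

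First, the forward implication. If $\sgmG$ is a group, then $(I, \bzer)$ is represented by a full-image word $w$ over $\mG'$. Since $\widehat{(A'_i, \ba'_i)}^{\,n_i} = (A'_i, \ba'_i)$ for each $i$, replacing every occurrence of a letter $(A'_i, \ba'_i)$ in $w$ by the block $\widehat{(A'_i, \ba'_i)}^{\,n_i}$ produces a word $w'$ over $\widehat{\mG}$ that still evaluates to $(I, \bzer)$ and is still full-image. Applying Lemma~\ref{lem:radcomb} to $w'$ then yields the desired set $\{\bs_1, \ldots, \bs_m\} \subseteq \mS$ satisfying (i)--(iv) with $\bx = \bzer$.

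For the converse, suppose $\{\bs_1, \ldots, \bs_m\} \subseteq \mS$ satisfies (i)--(iv), so $\sum_{\ell} r_\ell \bs_\ell = \bzer$ with all $r_\ell > 0$. Writing $N_\ell$ for the number of cells among the $\Cell(\bd_{ij})$ and $\Cell(\be_k)$ that contain $\bs_\ell$, the vector $\tfrac{1}{N_\ell}\bs_\ell$ lies in each of them, since cells are closed under scaling by $\Rpp$. For each such cell we apply Lemma~\ref{lem:fullcell} (if it is a $\Cell(\bd_{ij})$) or Lemma~\ref{lem:fullcell2} (if it is a $\Cell(\be_k)$) to $\tfrac{1}{N_\ell}\bs_\ell$, obtaining $\tfrac{1}{N_\ell}\bs_\ell = \alpha \boldsymbol{u} + \beta \boldsymbol{u}'$ with $\alpha, \beta > 0$ and $(I, \boldsymbol{u}), (I, \boldsymbol{u}') \in \langle \mG' \rangle$ represented by words over $\mG'$ in which $(A'_i, \ba'_i), (A'_j, \ba'_j)$ (respectively $(A'_k, \ba'_k)$) occur. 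Adding the $N_\ell$ such decompositions recovers $\bs_\ell$, so scaling by $r_\ell$ and summing over $\ell$ turns $\sum_\ell r_\ell \bs_\ell = \bzer$ into a strictly positive real identity $\sum_p c_p \boldsymbol{u}_p = \bzer$ in which each $(I, \boldsymbol{u}_p) \in \langle \mG' \rangle$ is represented by a word over $\mG'$, and by conditions (ii)--(iv) (together with $J_+ \cup J_- \cup J_0 = \{1, \ldots, K\}$) these words collectively use every letter of $\mG'$. Conjugating back, $(I, \boldsymbol{u}_p)$ corresponds to $(I, P\boldsymbol{u}_p) \in \sgmG \leq \SA$, so $P\boldsymbol{u}_p \in \Z^2$; thus $\sum_p c_p (P\boldsymbol{u}_p) = \bzer$ is a homogeneous integer system with a strictly positive real solution, hence it has a strictly positive integer solution $(m_p)$ (density of $\Q$ and homogeneity, exactly as in Proposition~\ref{prop:triv}), giving $\sum_p m_p \boldsymbol{u}_p = \bzer$. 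Then $\prod_p (I, \boldsymbol{u}_p)^{m_p} = (I, \bzer)$, and expanding each factor by the word representing $(I, \boldsymbol{u}_p)$ yields a full-image word over $\mG'$ representing $(I, \bzer)$; by Lemma~\ref{lem:grpword}, $\langle \mG' \rangle$, and hence $\sgmG$, is a group.

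I expect the main obstacle to be the bookkeeping in the converse direction: each $\bs_\ell$ must be split among \emph{all} the cells it meets, not just one of them, because it is precisely conditions (ii)--(iv) of Lemma~\ref{lem:radcomb}, matched against the letter-occurrence guarantees of Lemmas~\ref{lem:fullcell} and~\ref{lem:fullcell2}, that forces the word for $(I, \bzer)$ to be full-image. A secondary delicate point is the passage from a strictly positive real combination to an integral one, which works only after conjugating back into $\SA$ so that the vectors become integral --- the place where the real-geometric and integer-arithmetic viewpoints are combined.
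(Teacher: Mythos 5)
Your proof is correct and follows essentially the same route as the paper's: the forward direction substitutes radicals into a full-image word for $(I,\bzer)$ and invokes Lemma~\ref{lem:radcomb}, while the converse decomposes each $\bs_\ell$ via Lemmas~\ref{lem:fullcell} and~\ref{lem:fullcell2} and passes from a strictly positive real combination to a strictly positive integer one using integrality of the translation parts after conjugating back. Your explicit $N_\ell$-splitting of each $\bs_\ell$ over every cell containing it is a slightly more careful handling of the bookkeeping behind conditions (ii)--(iv) than the paper's terse treatment, but it is the same argument.
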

\begin{proof}
    First suppose there exists a finite set of vectors $\{\bs_1, \ldots, \bs_m\} \subseteq \mS$ satisfying (i)-(iv), we want to find a full-image word $w$ over the alphabet $\mG$, representing $(I, \bzer)$.

    For any $t \in \{1, \ldots, m\}$, if $\bs_t$ is an element of $\Cell(\bd_{ij})$, then by Lemma~\ref{lem:fullcell}, there exist $(I, \bv), (I, \bw) \in \langle \mG' \rangle$ such that $\bs_t = r_1 \bv + r_2 \bw$ for some $r_1, r_2 > 0$.
    Also, the letters $(A'_i, \ba'_i)$ and $(A'_j, \ba'_j)$ appear in some words representing $(I, \bv)$ and $(I, \bw)$.
    If $\bs_t$ is an element of $\Cell(\be_{k})$, then by Lemma~\ref{lem:fullcell2}, there exist $(I, \bv), (I, \bw) \in \langle \mG' \rangle$ such that $\bs_t = r_1 \bv + r_2 \bw$ for some $r_1, r_2 > 0$.
    Also, the letter $(A'_k, \ba'_k)$ appears in some words representing $(I, \bv)$ and $(I, \bw)$.
    
    Therefore, $\bs_t$ can always be written as a strictly positive linear combination of vectors $\bv$ with $(I, \bv) \in \langle \mG' \rangle$.
    Hence, by condition (i), there exist strictly positive reals $r_1, \ldots, r_M$ and vectors $\bv'_1, \ldots, \bv'_M$, such that $r_1 \bv'_1 + \cdots + r_M \bv'_M = \bzer$ and $(I, \bv'_t) \in \langle \mG' \rangle$ for $t = 1, \ldots, M$.
    Furthermore, conditions (ii), (iii) and (iv) show that every letter $(A'_i, \ba'_i)$ in $\mG'$ appears at least once in a word representing $(I, \bv'_t)$ for some $t \in \{1, \ldots, M\}$.

    Changing back to the original basis, this shows that $r_1 \bv_1 + \cdots + r_M \bv_M = \bzer$, where $(I, \bv_t) \in \langle \mG \rangle$ for $t = 1, \ldots, M$.
    Since the entries of $\bv_t$ are all integers, there exist strictly positive \emph{integers} $n_1, \ldots, n_M$, such that $n_1 \bv_1 + \cdots + n_M \bv_M = \bzer$.
    Hence
    \[
        (I, \bv_1)^{n_1} \cdots (I, \bv_M)^{n_M} = (I, \bzer).
    \]
    Every letter $(A_i, \ba_i)$ in $\mG$ appears at least once in a word representing $(I, \bv_t)$ for some $t \in \{1, \ldots, M\}$.
    Therefore, $(I, \bzer)$ can be represented as a full-image word.
    This shows that $\sgmG$ is a group by Lemma~\ref{lem:grpword}.

    Next, suppose $\sgmG$ is a group. We show that there exists an $\Rpp$-linear combination of elements in $\mS$ equal to $\bzer$, that satisfies the conditions (ii), (iii) and (iv).
    
    By Lemma~\ref{lem:grpword}, there exists a full-image word $w \coloneqq (B'_1, \bb'_1) \cdots (B'_m, \bb'_m)$ over the alphabet $\mG'$, representing $(I, \bzer)$.
    Replacing each letter $(B'_i, \bb'_i)$ in $w$ with $n_i$ consecutive letters $\widehat{(B'_i, \bb'_i)}$, we obtain a full-image word 
    \[
        \widehat{w} = (C_1, \bc_1) \cdots (C_M, \bc_M)
    \]
    over the alphabet $\widehat{\mG}$, representing $(I, \bzer)$.

Then by Lemma~\ref{lem:radcomb}, the vector $\bzer$ can be written as an $\Rpp$-linear combination of a finite number of elements in $\mS$, satisfying the conditions (ii), (iii) and (iv).
\end{proof}

\begin{cor}\label{cor:simdec}
    If the generator $A$ of the group $\langle A_1, \ldots, A_K \rangle \cong \Z$ is a positive scale, it is decidable in PTIME whether $\sgmG$ is a group.
\end{cor}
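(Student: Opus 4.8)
The goal is to deduce Corollary~\ref{cor:simdec} directly from Proposition~\ref{prop:idtocells}: it suffices to show that one can decide in PTIME whether there exists a finite set $\{\bs_1, \ldots, \bs_m\} \subseteq \mS$ satisfying conditions (i)--(iv) of Lemma~\ref{lem:radcomb}. The key point is that this condition is essentially \emph{combinatorial}. Indeed, $\mS$ is a union of finitely many cells, each of which is one of the nine cell types of Figure~\ref{fig:cells} (a quadrant, a half-axis, or $\{\bzer\}$), and conditions (i)--(iv) only constrain (a) which cells contain at least one chosen vector and (b) the directions of the chosen vectors — not their precise magnitudes. As a first step I would compute, for every $(i,j) \in J_+ \times J_-$, the cell $\Cell(\bd_{ij})$, and for every $k \in J_0$, the cell $\Cell(\be_k)$. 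Since the generator $A$ has polynomially bounded bit-size (it is produced in PTIME by Lemma~\ref{lem:decisoZ}) and $\lambda$ is a fixed real quadratic integer with $\lambda > 1$ (in fact $\lambda \geq (3+\sqrt 5)/2$), the numbers $\lambda^{\pm n_i}$, $a_i$, $b_i$, and hence $d_{ija}, d_{ijb}$, all lie in the fixed quadratic field $\Q(\lambda)$ with polynomially bounded bit-size; their signs can therefore be computed in PTIME, locating each $\bd_{ij}$ and $\be_k$ in one of the nine cells.

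\textbf{Reduction to a covering problem.} Choosing $\{\bs_1, \ldots, \bs_m\} \subseteq \mS$ satisfying (ii)--(iv) amounts to selecting the sub-collection $\mathcal{A}$ of cells that actually contain a chosen vector, subject to: for every $i \in J_+$ some $\Cell(\bd_{ij}) \in \mathcal{A}$; for every $j \in J_-$ some $\Cell(\bd_{ij}) \in \mathcal{A}$; and $\Cell(\be_k) \in \mathcal{A}$ for every $k \in J_0$. Condition (i) then requires that, for this $\mathcal{A}$, one can draw one or more vectors from each member of $\mathcal{A}$ and form a strictly positive $\R$-linear combination equal to $\bzer$. The crucial observation is that, because the cells are cones and a chosen vector may be rescaled and perturbed freely within its cell (and a two-dimensional cell even permits two non-parallel representatives), the existence of such a combination depends \emph{only on the set of cell types} occurring in $\mathcal{A}$: it holds precisely when this set of types is not separated from $\bzer$, a property I would pin down by a short case analysis over the nine types (equivalently, by testing the finitely many directions normal to cell edges), noting that the half-axis cells such as $\mR_{+0}$ behave slightly differently from the open quadrants. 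Since there are only $2^{9}$ sets of types, the family $\mathcal{F}$ of ``feasible'' type-sets is precomputed in constant time.

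\textbf{The algorithm.} For each $\mathcal{T} \in \mathcal{F}$ containing every type of the mandatory cells $\{\Cell(\be_k) : k \in J_0\}$, I would check whether $\mathcal{T}$ is \emph{realizable}: that each type in $\mathcal{T}$ actually occurs among the computed cells, and that the collection of all pairs $(i,j)$ with $\Cell(\bd_{ij}) \in \mathcal{T}$ covers every index of $J_+$ and every index of $J_-$. If some $\mathcal{T}$ passes both checks, then taking $\mathcal{A}$ to be exactly the cells whose type lies in $\mathcal{T}$ yields vectors realizing (i)--(iv), so $\sgmG$ is a group; conversely, any valid configuration induces a feasible and realizable type-set, so the test is complete. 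Each realizability check is a polynomial scan over the $O(K^2)$ pairs, and there are constantly many $\mathcal{T}$, so the entire procedure runs in PTIME. Combined with Proposition~\ref{prop:idtocells}, this proves the corollary.

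\textbf{Main obstacle.} The delicate part is the reduction step: establishing the exact combinatorial characterization of which sets of cell types admit a strictly positive combination equal to $\bzer$ with every type represented, and verifying that it matches the ``if'' and ``only if'' directions of Proposition~\ref{prop:idtocells}. This requires carefully exploiting the freedom to pick several non-parallel vectors inside a two-dimensional cell and to rescale, while correctly handling the one-dimensional axis cells and the degenerate cell $\{\bzer\}$. A secondary point needing care is arguing that all sign computations in $\Q(\lambda)$ stay polynomial, despite $A$, $\lambda$, and the exponents $n_i$ being potentially exponentially large in value (here one uses that $\lambda$ is an algebraic integer bounded away from $1$, so the $n_i$ are only polynomially large).
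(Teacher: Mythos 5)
Your proposal is correct and follows essentially the same route as the paper: both invoke Proposition~\ref{prop:idtocells} and reduce the existence of vectors satisfying (i)--(iv) to a constant-size geometric test on the at most nine cells of $\mS$ (the paper phrases this as checking which of the required cells lie in the largest linear subspace of the cone $\langle \mS \rangle_{\Rp}$, which is equivalent to your enumeration over feasible type-sets). Your additional remarks on computing the cells in PTIME via sign determination in $\Q(\lambda)$ and on the polynomial bound on the exponents $n_i$ merely fill in details the paper leaves implicit.
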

\begin{proof}
    Since $\langle A_1, \ldots, A_K \rangle \cong \Z$, we have $J_+ \neq \emptyset, J_- \neq \emptyset$.
    Given $\mS$ as a set of cells, it is decidable whether there exist a finite non-empty set of vectors $\{\bs_1, \ldots, \bs_m\} \subseteq \mS$ satisfying conditions (ii), (iii) and (iv), as well as strictly positive reals $r_1, \ldots, r_m$ such that $r_1 \bs_1 + \cdots + r_m \bs_m = \bzer$.
    Indeed, this is true if and only if there exists a linear subspace $\mL$ contained in the $\Rp$-cone generated by $\mS$, with the property that $\mL$ intersects some cell $\Cell(\bd_{i*})$ for each $i \in J_+$, some cell $\Cell(\bd_{*j})$ for each $j \in J_-$, and some cell $\Cell(\be_{k})$ for each $k \in J_0$.
    This is decidable in PTIME since the number of cells in $\mL$ is finite (at most nine).
    
    Given the input set $\mG$, we can compute the set of cells in $\mS$ in PTIME.
    Therefore, by Proposition~\ref{prop:idtocells}, it is decidable in PTIME whether $\sgmG$ is a group.
\end{proof}

Combining all cases, we can now prove the main result of this section.

\propsimid*
\begin{proof}
Division into six cases has already been proved by Lemma~\ref{lem:solvZ} and \ref{lem:tororZ} as well as the discussion that follows.
In cases (ii), (iii) and (v), Proposition~\ref{prop:torid}, \ref{prop:rev} and \ref{prop:le0} show that $\sgmG$ is a group.
We now show PTIME decidability.

First, we decide in PTIME which of the six cases is true for $H$, using Lemma~\ref{lem:decisoZ}.
In cases (ii), (iii) and (v), the Group Problem for $\sgmG$ has a positive answer.
In cases (i), (iv) and (vi), Proposition~\ref{prop:triv}, Corollary~\ref{cor:shear} and Corollary~\ref{cor:simdec} show the required PTIME decidability result.
\end{proof}

\section{Extensions and obstacles to Semigroup Membership}\label{sec:obs}
In previous sections we showed decidability and NP-completeness of the Identity Problem and the Group Problem in $\SA$.
In this section we discuss possible extensions of our result and obstacles to solving the Semigroup Membership problem in $\SA$.

Let $\mG \coloneqq \{(A_1, \ba_1), \ldots, (A_K, \ba_K)\}$ be a set of elements of $\SA$.
This first obvious obstacle for deciding Semigroup Membership for $\sgmG$ is that we can no longer suppose $G \coloneqq \langle A_1, \ldots, A_K \rangle$ to be a group, which we could do for the Identity Problem and the Group Problem.
However, if we restrict the target element to be of the form $(I, \ba)$, that is, if we want to deciding whether $(I, \ba) \in \sgmG$, then we can still suppose $G$ to be a group.

Indeed, if $G$ is not a group, then at least one of the $A_i$ is not invertible in $G$. Therefore, a word over $\mG$ representing $(I, \ba)$ cannot contain the letter $(A_i, \ba_i)$.
We can thus delete $(A_i, \ba_i)$ from the alphabet $\mG$ without changing whether $(I, \ba) \in \sgmG$.
One can repeat this process until $G$ becomes a group.

Under the additional assumption that $G$ is a group, we can decide Semigroup Membership for $\sgmG$ in all cases except one.
Recall Theorem~\ref{thm:Tits}. If $G$ contains a non-abelian free subgroup, then $\sgmG$ is a group by Proposition~\ref{prop:nonsimid}.
Hence Semigroup Membership reduces to Group Membership, and is decidable by the result of Delgado~\cite{delgado2017extensions}.
If $G$ is virtually solvable, then consider the six cases in Proposition~\ref{prop:simid}.
Case (ii), (iii) and (v) are easy since $\sgmG$ becomes a group.
In case (i), deciding whether $(I, \ba) \in \sgmG$ reduces to solving the linear equation $n_1 \ba_1 + n_2 \ba_2 + \cdots n_K \ba_K = \ba$ for $(n_1, \ldots, n_K) \in \Zp^K \setminus \{\bzer\}$, and is decidable by integer programming.
In case (iv), Semigroup Membership for $\sgmG$ reduces to Semigroup Membership in the Heisenberg group $\HH_3(\Q)$, which is decidable by the result of Colcombet et al.~\cite{colcombet2019reachability}.
Only case (vi) remains unsolved.

We now show that solving the Semigroup Membership problem in case (vi) is equivalent to solving the Semigroup Membership problem in the group $\Z[\lambda] \rtimes_{\lambda} \Z$.
Given $\lambda > 1$ that satisfies a quadratic equation $\lambda^2 - a \lambda + 1 = 0$ for some $a \geq 3$, we define the following semidirect product:
\[
\Z[\lambda] \rtimes_{\lambda} \Z \coloneqq \left\{
\begin{pmatrix}
    \lambda^k & x \\
    0 & 1\\
\end{pmatrix}
\;\middle|\;
k \in \Z, x \in \Z[\lambda]
\right\}.
\]
Here, $\Z[\lambda]$ is the ring generated by $1$ and $\lambda$.
There exist an embedding of $\Z[\lambda] \rtimes_{\lambda} \Z$ as a subgroup of $\SA$ in the following way.
For a given $\lambda$ satisfying $\lambda^2 - a \lambda + 1 = 0$, define the matrix 
$
A_{\lambda} \coloneqq
\begin{pmatrix}
    0 & -1 \\
    1 & a \\
\end{pmatrix}
$
in $\SL(2, \Z)$.
Since $\lambda$ is a quadratic integer, every element $x \in \Z[\lambda]$ can be written uniquely as $x = x_1 + \lambda x_{\lambda}$ for some $x_1, x_{\lambda} \in \Z$.
Define the map
\begin{align*}
    \phi \colon \Z[\lambda] \rtimes_{\lambda} \Z & \longrightarrow \SA \\
    \begin{pmatrix}
    \lambda^k & x \\
    0 & 1\\
    \end{pmatrix} 
    & \longmapsto \left(A_{\lambda}^k, (x_1, x_{\lambda})^{\top}\right).
\end{align*}
It is easy to verify that $\phi$ is an injective group homomorphism.
Furthermore, the image under $\phi$ of a sub-semigroup\footnote{We suppose that the upper-left entries of elements of the semigroup contain both positive and negative exponents of $\lambda$, otherwise deciding Semigroup Membership is easy.} of $\Z[\lambda] \rtimes_{\lambda} \Z$ satisfies case (vi) of Proposition~\ref{prop:simid}.
Therefore, solving the hard case of the Semigroup Membership problem in $\SA$ necessitates solving the Semigroup Membership problem in $\Z[\lambda] \rtimes_{\lambda} \Z$.

On the other hand, given any positive scale $A \in \SL(2, \Z)$ with eigenvalue $\lambda > 1$, one can find a change of basis matrix $P \in \SL(2, \Z)$ such that $P^{-1}AP = A_{\lambda}$ (see~\cite{CAMPBELL1991175}).
Therefore, any (finitely generated) semigroup $\sgmG$ satisfying case (vi) of Proposition~\ref{prop:simid} must be conjugate\footnote{The conjugation is realized by the change-of-basis matrix $\diag(P, 1).$} to a (finitely generated) sub-semigroup of $\phi(\Z[\lambda] \rtimes_{\lambda} \Z) \leq \SA$.
Hence, solving Semigroup Membership in $\Z[\lambda] \rtimes_{\lambda} \Z$ is sufficient for solving Semigroup Membership in the case (vi) of Proposition~\ref{prop:simid}.

Although the Semigroup Membership problem in $\Z[\lambda] \rtimes_{\lambda} \Z$ remains an open problem, the group bears certain similarities to the better studied \emph{Baumslag-Solitar group} $\mathsf{BS}(1, q)$:
\[
\mathsf{BS}(1, q) \coloneqq 
\Z[\frac{1}{q}] \rtimes_{q} \Z = \left\{
\begin{pmatrix}
    q^k & x \\
    0 & 1\\
\end{pmatrix}
\;\middle|\;
k \in \Z, x \in \Z[\frac{1}{q}]
\right\}.
\]
Here, $q \geq 2$ is an integer.
A recent result by Cadilhac, Chistikov and Zetzsche~\cite{DBLP:conf/icalp/CadilhacCZ20} showed decidability of the \emph{rational subset membership problem} in $\mathsf{BS}(1, q)$ by considering rational languages of \emph{base-$q$ expansions}.
This result subsumes decidability of the Semigroup Membership problem in $\mathsf{BS}(1, q)$.
Therefore, it would be interesting to adapt this approach to study Semigroup Membership in $\Z[\lambda] \rtimes_{\lambda} \Z$ by considering rational languages of \emph{base-$\lambda$ expansions}~\cite{BLANCHARD1989131}, where $\lambda$ is an algebraic integer.
Nevertheless, adaptation of the previous result to a non-integer base of numeration poses various additional difficulties that we have not been able to surmount.

\bibliographystyle{alphaurl}
\bibliography{aff2}

\end{document}